\crefname{equation}{}{}
\crefname{enumi}{}{}
\theoremstyle{definition}
\newtheorem{dfn}{Definition}[section]
\theoremstyle{plain}
\newtheorem{thm}[dfn]{Theorem}
\newtheorem{thmdfn}[dfn]{Theorem/Definition}
\newtheorem{prop}[dfn]{Proposition}
\newtheorem{lem}[dfn]{Lemma}
\newtheorem{cor}[dfn]{Corollary}
\newtheorem{conj}[dfn]{Conjecture}
\newtheorem*{question}{Problem}
\theoremstyle{remark}
\newtheorem{zbbackend}[dfn]{Example}
\newenvironment{zb}[1][]{\begin{zbbackend}[#1]}{\hspace*{\fill}$\diamondsuit$\end{zbbackend}}
\newtheorem{rmk}[dfn]{Remark}
\newenvironment{claim}{\par\medskip\noindent\textit{Claim.}\space}{\par\medskip}
\newenvironment{claimproof}{\par\noindent\textit{Proof of claim.}\space}{\hfill$\diamond$\medskip\par}
\newcommand{\word}[1]{\textbf{#1}} 
\newcommand{\revision}[1]{{#1}} 
\title{Combinatorial flats and Schubert varieties of subspace arrangements}
\author{Colin Crowley}
\email{crowley@uoregon.edu}
\address{Department of Mathematics, University of Oregon, Eugene, OR 97403
and School of Mathematics, Institute for Advanced Study, Princeton, NJ 08540}
\author{Connor Simpson}
\email{connorgs@connorgs.net}
\author{Botong Wang}
\email{wang@math.wisc.edu}
\address{School of Mathematics,
  Institute for Advanced Study,
  1 Einstein Drive,
  Princeton, NJ 08540
}
\DeclareMathOperator{\Aut}{Aut} 
\DeclareMathOperator{\GL}{GL} 
\DeclareMathOperator{\PGL}{PGL} 
\renewcommand{\P}{\mathbb{P}} 
\newcommand{\A}{\mathbb{A}} 
\newcommand{\Ga}{\mathbb{G}_a} 
\newcommand{\Gm}{\mathbb{G}_m} 
\newcommand{\G}{\mathbb{G}} 
\newcommand{\geo}{\mathrm{geo}}
\newcommand{\simp}{\mathrm{sim}}
\DeclareMathOperator{\St}{St} 
\newcommand{\C}{\mathbb{C}} 
\newcommand{\Q}{\mathbb{Q}} 
\newcommand{\Z}{{\mathbb{Z}}} 
\newcommand{\N}{\mathbb{Z}_{\geq 0}} 
\newcommand{\K}{\mathbb{K}} 
\newcommand{\pt}{\mathrm{pt}} 
\newcommand{\IH}{I\!H}
\DeclareMathOperator{\rk}{rk} 
\DeclareMathOperator{\nul}{null} 
\DeclareMathOperator{\crk}{crk} 
\DeclareMathOperator{\codim}{codim}
\DeclareMathOperator{\Id}{Id}
\newcommand{\ph}{\varphi}
\begin{document}
\begin{abstract}
  The lattice of flats $\mathcal L_M$ of a matroid $M$ is combinatorially well-behaved and, when $M$ is realizable, admits a geometric model in the form of a ``Schubert variety of hyperplane arrangement''.
  In contrast, the lattice of flats of a polymatroid exhibits many combinatorial pathologies and admits no similar geometric model.

  We address this situation by defining the lattice $\mathcal L_P$ of ``combinatorial flats'' of a polymatroid $P$.
  Combinatorially, $\mathcal L_P$ exhibits good behavior analogous to that of $\mathcal L_M$: it is graded, determines $P$ when $P$ is simple, and is top-heavy.
  When $P$ is realizable over a field of characteristic 0, we show that $\mathcal L_P$ is modeled by ``the Schubert variety of a subspace arrangement''.

  Our work generalizes a number of results of Ardila-Boocher and Huh-Wang on Schubert varieties of hyperplane arrangements; however,
  the geometry of Schubert varieties of subspace arrangements is noticeably more complicated than that of  Schubert varieties of hyperplane arrangements.
  Many natural questions remain open.
\end{abstract}
\maketitle
\section{Introduction}
\subsection{} Let $M$ be a matroid of rank $d$ on an $N$-element set $E$, and let $\mathcal L_M$ be its poset of flats. Write $\mathcal L_M^k$ for the set of flats of rank $k$.
Beloved combinatorial properties of $\mathcal L_M$ include:
\begin{itemize}
\item $\mathcal L_M$ is a semimodular lattice (in particular, it is graded), and
\item there is a unique simple matroid $M^\simp$ with $\mathcal L_{M^\simp} \cong \mathcal L_M$.
\item $|\mathcal L_M^k| \leq |\mathcal L_M^{k+1}|$ when $k \leq d/2n$.
\item $\mathcal L_M$ is \word{top-heavy}: $|\mathcal L_M^k| \leq |\mathcal L_M^{d-k}|$ when $k \leq d/2$.
\end{itemize}
Top-heaviness was long conjectured \cite{DW74, DW75}, but proved only recently in \cite{BHMPW20b}.
The approach of \cite{BHMPW20b} is inspired by \cite{HW17}, who proved top-heaviness when $M$ is the matroid associated to a linear subspace $V \subset \K^N$, with $\K$ a field.
Central to \cite{HW17} is the \word{Schubert variety of hyperplane arrangement} $Y_V$, defined in \cite{AB16} as the closure of $V \subset \K^N$ in $(\P^1)^N = (\K \cup \infty)^N$.

The Schubert variety of hyperplane arrangement $Y_V$ may be thought of as an algebro-geometric counterpart of $\mathcal L_M$, fully capturing the combinatorics of $M$.
To see why, first observe that the additive action of $\K$ \revision{on itself} extends to an action on $\P^1$, fixing $\infty$.
Hence, \revision{$\K^N$ acts on $(\P^1)^N$.}
The orbits of this action are of the form
\[ \textstyle
  O_S = \prod_{i \in S} \K \times \prod_{j \in E \setminus S} \infty,
\]
for each $S \subset \{1, \ldots, N\}$.
From \cite{AB16,HW17}, we learn that $Y_V \cap O_S$ is nonempty if and only if $S$ is a flat. In particular, $Y_V$ is a union of finitely many $V$-orbits. Moreover, if $S$ and $S'$ are flats, then
\begin{itemize}
\item  $Y_V \cap O_S$ is a single $V$-orbit, isomorphic to $\rk_M(S)$-dimensional affine space,
\item  $\overline{Y_V \cap O_S} \supset Y_{V} \cap O_{S'}$ if and only if $S \supset S'$, and
\item the class of $\overline{Y_V \cap O_S}$ in the Chow ring of $(\P^1)^n$ is
  \[
    [\overline{Y_V \cap O_S}] = \sum_{B \text{ basis of $S$}} {\textstyle [(\P^1)^B],}
  \]
  where $(\P^1)^B = \prod_{i \in B} \P^1 \times \prod_{j \in E \setminus B} \infty$.
\end{itemize}  

\medskip
\subsection{} \label{intropolymatroid}
Polymatroids are to subspace arrangements as matroids are to hyperplane arrangements.
If $M$ is replaced by a rank-$d$ polymatroid $P$ on $E$, then all expected combinatorial properties fail.
The poset of flats of a polymatroid is always a lattice, but may not be graded.
When $P$ is simple, the poset fails to determine $P$, and there is no sense in which it is top-heavy.

\revision{To remedy these pathologies, we introduce the \word{lattice of combinatorial flats} $\mathcal L_P$ of a polymatroid.
When $P$ is a matroid, $\mathcal L_P$ is the usual lattice of flats.
When $P$ is realized by a subspace arrangement $V_1, \ldots, V_N$, the lattice $\mathcal L_P$ is the poset of flats of a collection of general flags with minimal elements $V_1, \ldots, V_N$ (see \cref{intro-example}).

For arbitrary polymatroids,  $\mathcal L_P$ may be constructed as follows\footnote{The definitions of ``rank'' and ``combinatorial flat'' given here are different from, but equivalent to, the definitions given in \cref{comboflatsminimum}. The equivalence is explained in \cref{equivalence}.}.
Let $\rk_P$ be the rank function of $P$, and let $\mathbf e_1, \ldots, \mathbf e_N$ be the standard basis of $\Z^N$.
Choose $\mathbf n = (n_1,\ldots, n_N) \in \N^N$, such that $n_i \geq \rk_P(i)$.
Recall that an \word{independent multiset} of $P$ is $\mathbf b = (b_1, \ldots, b_N) \in \N^N$ such that $\sum_{i \in A} b_i \leq \rk_P(A)$ for all $A \subset E$.
A \word{basis} for a multiset $\mathbf s = (s_1, \ldots, s_N) \in \N^N$ is an independent multiset $\mathbf b = (b_1, \ldots, b_N)$, maximal among those contained in $\mathbf s$.
Define the \word{rank} of $\mathbf s$ by $\rk_P(\mathbf s) := \sum_{i \in E} b_i$ for any basis $\mathbf b$\footnote{\cref{welldfndrk} explains well-definedness of rank. We write $\rk_P(\cdot)$ for ranks of both sets and multisets; this will never cause confusion.}, and
call $\mathbf s$ a \word{combinatorial flat}\footnote{This definition resembles the related notion of ``flats for matricubes'' \cite{AG24}. See \cref{matricube} for comparison of these notions.}
if $\rk_P(\mathbf s + \mathbf e_i) > \rk_P(\mathbf s)$ for all $i$ such that $s_i < n_i$.

We define $\mathcal L_P$ to be the set of combinatorial flats, ordered by inclusion.
We will see in \cref{comboflatsimplification} that $\mathcal L_P$ does not depend on $\mathbf n$ (although the underlying set of combinatorial flats does; we write $\mathcal L_{P, \mathbf n}$ when we wish to emphasize this).}

\begin{thm}\label{combo}
  Let $P$ be a polymatroid. \revision{The poset of combinatorial flats satisfies:}
  \begin{enumerate}
  \item \label{combo:semimodularlattice} $\mathcal L_P$ is a graded semimodular lattice,
  \item \label{combo:simple} there is a unique simple polymatroid $P^\simp$ such that $\mathcal L_P \cong \mathcal L_{P^\simp}$,
  \item $|\mathcal L_P^k| \leq |\mathcal L_P^{k+1}|$ when $k \leq d/2$, and
  \item \label{combo:topheavy} $\mathcal L_P$ is top-heavy: $|\mathcal L_P^k| \leq |\mathcal L_P^{d-k}|$ when $k \leq d/2$.
  \end{enumerate}
\end{thm}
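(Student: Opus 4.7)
The plan is to relate $\mathcal L_P$ to the lattice of flats of an auxiliary matroid $\widetilde P$, and reduce the four claims to known facts for matroids via a symmetry argument. Given $\mathbf n$ with $n_i \ge \rk_P(i)$, define $\widetilde P$ on $\widetilde E := \bigsqcup_{i \in E} [n_i]$ by $\rk_{\widetilde P}(\widetilde A) = \rk_P(\pi(\widetilde A))$, where $\pi(\widetilde A)_i = |\widetilde A \cap [n_i]|$; submodularity of the polymatroid rank function implies $\rk_{\widetilde P}$ is a matroid rank function. The map $\mathbf s \mapsto F_{\mathbf s} := \bigsqcup_i \{1, \ldots, s_i\}$ is a rank-preserving bijection from combinatorial flats of $P$ onto the \emph{compressed} flats of $\widetilde P$ (those intersecting each fiber in an initial segment). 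The symmetric group $G := \prod_i S_{n_i}$ acts on $\widetilde P$ by matroid automorphisms, and each $G$-orbit of $\mathcal L_{\widetilde P}$ contains a unique compressed representative, so $|\mathcal L_P^k| = |\mathcal L_{\widetilde P}^k / G|$.

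For (i), submodularity implies that the componentwise minimum of two combinatorial flats is a combinatorial flat, so $\mathcal L_P$ has meets given by componentwise min and $\mathbf s \mapsto F_{\mathbf s}$ preserves meets. Since closures preserve rank, $\rk_P(\mathbf s \vee \mathbf t) = \rk_P(\max(\mathbf s, \mathbf t)) = \rk_{\widetilde P}(F_{\mathbf s} \vee F_{\mathbf t})$, so the semimodularity inequality in $\mathcal L_{\widetilde P}$ transfers to $\mathcal L_P$. Gradedness follows because any cover relation $\mathbf s \lessdot \mathbf t$ satisfies $\rk_P(\mathbf t) = \rk_P(\mathbf s) + 1$: pick $i$ with $s_i < t_i$; then $\cl_P(\mathbf s + \mathbf e_i)$ is a combinatorial flat strictly between $\mathbf s$ and $\mathbf t$ of rank $\rk_P(\mathbf s) + 1$, hence equal to $\mathbf t$. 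For (ii), declare a polymatroid simple if it has no loops and distinct elements $i, j$ produce distinct principal chains $0 < \mathbf e_i < \cdots < \rk_P(i)\mathbf e_i$ of combinatorial flats. Construct $P^\simp$ by deleting loops and identifying elements with coinciding principal chains; uniqueness follows because the principal chains are recoverable from $\mathcal L_P$ as certain saturated chains from $\hat 0$.

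Parts (iii) and (iv) are the main obstacle, and require the deep Hodge-theoretic machinery of \cite{BHMPW20b}. Applied to $\widetilde P$, that theorem yields injections $\mathbb Q\{\mathcal L_{\widetilde P}^k\} \hookrightarrow \mathbb Q\{\mathcal L_{\widetilde P}^{k+1}\}$ (for $2k < d$) and $\mathbb Q\{\mathcal L_{\widetilde P}^k\} \hookrightarrow \mathbb Q\{\mathcal L_{\widetilde P}^{d-k}\}$ (for $2k \le d$), arising from multiplication by powers of a Lefschetz class on the intersection cohomology module $\IH(\widetilde P)$. Every ingredient of the construction (the augmented Chow ring $\uCH(\widetilde P)$, the module $\IH(\widetilde P)$, the flat-indexed bases, and the Kähler form) is functorial under matroid automorphisms, so the $G$-action lifts canonically. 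Choosing the Lefschetz class $G$-invariantly (any symmetric combination of generators works) makes the injections themselves $G$-equivariant. Since the $G$-invariants of the permutation module $\mathbb Q\{\mathcal L_{\widetilde P}^k\}$ are exactly $\mathbb Q\{\mathcal L_{\widetilde P}^k / G\} = \mathbb Q\{\mathcal L_P^k\}$, and since the invariants functor preserves injectivity on permutation modules, we obtain the desired injections $\mathbb Q\{\mathcal L_P^k\} \hookrightarrow \mathbb Q\{\mathcal L_P^{k+1}\}$ and $\mathbb Q\{\mathcal L_P^k\} \hookrightarrow \mathbb Q\{\mathcal L_P^{d-k}\}$. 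The essential technical challenge is verifying that the specific injections constructed in \cite{BHMPW20b} can be made $G$-equivariant, since those constructions rest on a priori non-canonical choices of orderings and basis elements; an alternative would be to develop a Kähler package intrinsically for $\mathcal L_P$ in the style of \cite{BHMPW20b}, which would be a substantial undertaking.
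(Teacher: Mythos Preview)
Your overall strategy---lift to the multisymmetric matroid $\widetilde P$, view $\mathcal L_P$ as $\mathcal L_{\widetilde P}/\mathfrak S$, and descend the Lefschetz injections of \cite{BHMPW20b} through the $\mathfrak S$-invariants---is exactly the paper's approach for parts (i), (iii), (iv). Your concern about equivariance is well-placed but not an obstacle: the injections in \cite{BHMPW20b} are given by multiplication by (powers of) a Lefschetz element, and one may choose that element to be $\mathfrak S$-symmetric, so the maps are automatically $\mathfrak S$-equivariant. The paper simply cites this without further comment.

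The real gap is in part (ii). Your proposed simplification---delete loops and identify elements whose ``principal chains'' $0 < \mathbf e_i < \cdots < \rk_P(i)\mathbf e_i$ coincide---does not produce a simple polymatroid in general. Non-simplicity of a polymatroid need not come from two elements looking alike; it can come from a single element $i$ for which $\{i\}$ fails to be a flat (e.g.\ $E=\{1,2\}$, $\rk_P(1)=1$, $\rk_P(2)=2$, $\rk_P(\{1,2\})=2$). In such cases $\rk_P(i)\,\mathbf e_i$ is not even a combinatorial flat, so your principal chains are not well-defined, and no amount of identifying elements will repair this. The paper handles (ii) by an entirely different mechanism: it introduces a \emph{reduction} operation $R_i$ that lowers $\rk_P(i)$ by one when $\{i\}$ is not a flat (geometrically, replace $V_i$ by $V_i + \ell$ for a generic line $\ell$), proves the nontrivial fact that $\mathcal L_{R_i(P,\mathbf n)} \cong \mathcal L_{P,\mathbf n}$ (\cref{reductioniso}), and iterates until the result is simple. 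Uniqueness then follows by recovering $P$ from the maximal join-irreducibles of $\mathcal L_P$ (\cref{joinirreds}, \cref{determined}).
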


\begin{zb}\label{intro-example}
  Suppose that $V_1,V_2,V_3,V_4 \subseteq V = \C^3$ is a linear subspace arrangement, where $V_1,V_2,$ and $V_3$ are two dimensional, $V_1 \cap V_2 \cap V_3$ is one dimensional, and $V_4$ is one dimensional and not contained in $V_i$ for $i=1,2,3$.
  This arrangement defines the polymatroid $P$ on $E = \{1,2,3,4\}$ with $\rk_P(S) = 3 - \dim \cap_{i \in S} V_i$.
  Flats of $P$ correspond to intersections of $V_i$'s. The lattice of flats is neither graded nor top-heavy  (\cref{intro-picture}).
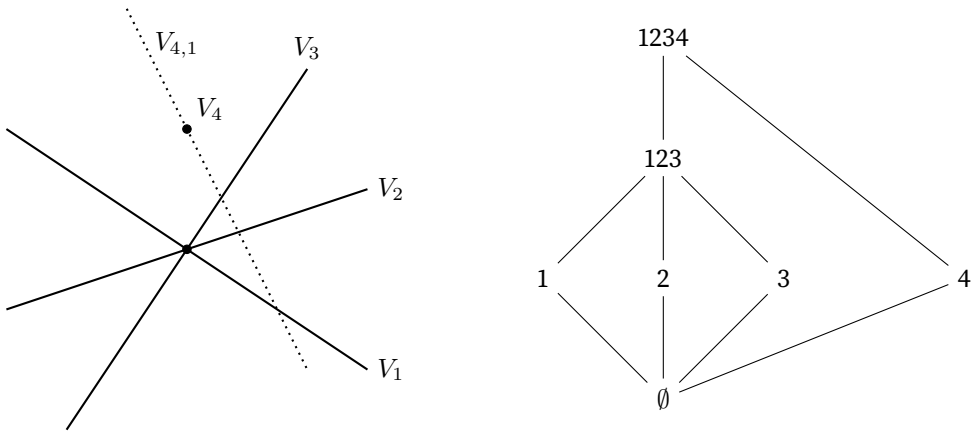
\begin{figure}[h!]
    \hspace{-1cm}
    \begin{minipage}{3in}
      \centering
    \begin{tikzpicture}[scale=0.8]
    \draw[thick] (-3,-1) -- (3,1) node[pos=1,right] {$V_2$};  
    \draw[thick] (-3,2) -- (3,-2) node[pos=1,right] {$V_1$};  
    \draw[thick] (-2,-3) -- (2,3) node[pos=1,above] {$V_3$};  

    \filldraw (0,0) circle (2pt);
    
    \filldraw (0,2) circle (2pt);
  \node[above right] at (0,2) {$V_4$};  
    
    \draw[dotted, thick] (-1,4) -- (2,-2) node[pos=0.1,right] {$V_{4,1}$};  
  \end{tikzpicture}

      \end{minipage}
      \quad
      \begin{minipage}{2.5in}
\begin{tikzpicture}[scale=1.6]
  \node (1234) at (0,5) {1234};
  \node (123) at (0,4) {123};
  \node (1) at (-1,3) {1};
  \node (2) at (0,3) {2};
  \node (3) at (1,3) {3};
  \node (4) at (2.5,3) {4};
  \node (empty) at (0,2) {$\emptyset$};

  \draw (empty) -- (1);
  \draw (empty) -- (2);
  \draw (empty) -- (3);
  \draw (empty) -- (4);
  \draw (1) -- (123);
  \draw (2) -- (123);
  \draw (3) -- (123);
  \draw (4) -- (1234);
  \draw (123) -- (1234);
\end{tikzpicture}
\end{minipage}
        \caption{Left: the projectivization of the subspaces arrangement $V_1, \ldots, V_4$ from \cref{intro-example} (solid), along with the projectivization of a general plane $V_{4,1}$ containing $V_4$ (dashed). Right: the lattice of flats of the polymatroid $P$ realized by $\{V_1,V_2,V_3,V_4\}$.}
        \label{intro-picture}
      \end{figure}


\revision{On the other hand, $\mathcal L_P \cong \mathcal L_{P, \mathbf n}$ (with $\mathbf n = (1,1,1,2)$) is both graded and top-heavy (\cref{intro-diagram}).}
Elements of $\mathcal L_P$ correspond to flats of the arrangement $\{V_1, V_2,V_3,V_4,V_{4,1}\}$, where $V_{4,1}$ is a general plane containing $V_4$. There is a lattice homomorphism embedding the lattice of flats of $P$ into $\mathcal L_P$.
This construction is explained in full generality in \cref{geometryofcf}.
\begin{figure}[h!]
  \begin{center}
    \newcommand{\joinirred}[1]{\underline{#1}}
    \newcommand{\latticeimage}[1]{{\color{blue} #1}}
    \begin{tikzcd}
                                                                             &  &                                                                   & {\latticeimage{\mathbf{e}_1 + \mathbf{e}_2 + \mathbf{e}_3 + 2\mathbf{e}_4}}                                                                         &                                                                    &  &                                                                                                   \\
\latticeimage{\mathbf{e}_1 + \mathbf{e}_2 + \mathbf{e}_3} \arrow[rrru, no head] &  & \mathbf{e}_1 + \mathbf{e}_4 \arrow[ru, no head]                   & \mathbf{e}_2 + \mathbf{e}_4 \arrow[u, no head]                                                             & \mathbf{e}_3 + \mathbf{e}_4 \arrow[lu, no head]                    &  & \latticeimage{\joinirred{2\mathbf{e}_4}} \arrow[lllu, no head]                                                   \\
\latticeimage{\joinirred{\mathbf{e}_1}} \arrow[u, no head] \arrow[rru, no head]             &  & \latticeimage{\joinirred{\mathbf{e}_2}} \arrow[llu, no head] \arrow[ru, no head] &                                                                                                            & \latticeimage{\joinirred{\mathbf{e}_3}} \arrow[llllu, no head] \arrow[u, no head] &  & \joinirred{\mathbf{e}_4} \arrow[llllu, no head] \arrow[lllu, no head] \arrow[llu, no head] \arrow[u, no head] \\
                                                                             &  &                                                                   & \latticeimage{\mathbf{0}} \arrow[lu, no head] \arrow[ru, no head] \arrow[rrru, no head] \arrow[lllu, no head] &                                                                    &  &                                                                                                  
\end{tikzcd}
\end{center}
\caption{\label{intro-diagram}The lattice of combinatorial flats of $P$ from \cref{intro-example}. Join-irreducibles are underlined. Blue elements are in the image of the embedding of $P$'s lattice of flats into $\mathcal L_P$.}
\end{figure}

\vspace{-0.7cm}
\qedhere
\end{zb}

\revision{Lattices of combinatorial flats can be axiomatized in a manner that closely resembles the well-known definition of geometric lattices (\cref{latticeaxiom}). Consequently, they constitute a new cryptomorphic definition of simple polymatroids.}

\subsection{} Unlike the usual poset of flats of $P$, $\mathcal L_P$ can be realized as the poset of affine cells of a projective variety whenever $P$ is realizable \revision{over a field of characteristic zero}.  

Work of Hassett-Tschinkel \cite{HT99} (reviewed in \cref{sec_HT}) shows that there exists action of the additive group $\K^{\mathbf n} := \K^{n_1} \times \cdots \times \K^{n_N}$ on $\P^{\mathbf n} := \P^{n_1} \times \cdots \times \P^{n_N}$, such that 
$\P^{\mathbf n}$ is partitioned into finitely many orbits $O_{\mathbf s}$, corresponding to multisets $\mathbf s \leq \mathbf n$.
Under the assumption that $V$ is ``polymatroid general'', a mild genericity condition described in \cref{pgprops} that constrains the intersection of $V$ with certain coordinate subspaces, we define $Y_V$ to be the $V$-orbit closure of the origin in $\P^{\mathbf n}$ under the Hasset-Tschinkel action of $\K^{\mathbf n}$ on $\P^{\mathbf{n}}.$
\begin{thm}\label{main:stratification}
  If $V \subset \K^{\mathbf n}$ is a polymatroid general subspace realizing $P$, then $Y_V \cap O_{\mathbf s}$ is nonempty if and only if $\mathbf s$ is a combinatorial flat.
  Moreover, if $\mathbf s, \mathbf s' \in \mathcal L_P \cong \mathcal L_{P, \mathbf n}$, then
  \begin{enumerate}
  \item \label{main:stratification:orbit} $Y_V \cap O_{\mathbf s}$ is a single $V$-orbit, isomorphic to affine space of dimension $\rk_P(\mathbf s)$,
  \item \label{main:stratification:poset} $\overline{Y_V \cap O_{\mathbf s}} \supset Y_V \cap O_{\mathbf s'}$ if and only if $\mathbf s \geq \mathbf s'$, and
  \item \label{main:stratification:class} the class of $\overline{Y_V \cap O_{\mathbf s}}$ in the Chow ring of $\P^{\mathbf n}$ has the form
    \[
      [\overline{Y_V \cap O_{\mathbf s}}] = \sum_{\mathbf b \text{ basis of $\mathbf s$}} c_{\mathbf b} [\P^{\mathbf b}],
    \]
    with all coefficients positive.
  \end{enumerate}
\end{thm}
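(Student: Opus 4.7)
I would prove the theorem by adapting the Ardila-Boocher and Huh-Wang arguments for the hyperplane case, with the Hassett-Tschinkel stratification from \cref{sec_HT} replacing the standard $(\P^1)^N$ stratification. The central tool is the identification, for each multiset $\mathbf s \leq \mathbf n$ and each $p \in O_{\mathbf s}$, of the stabilizer of $p$ under the $\K^{\mathbf n}$-action with a fixed linear subspace $W_{\mathbf s} \subseteq \K^{\mathbf n}$ of dimension $\sum_i (n_i - s_i)$, satisfying $W_{\mathbf s'} \subseteq W_{\mathbf s}$ whenever $\mathbf s \leq \mathbf s'$. (This is the content of the Hassett-Tschinkel classification reviewed in \cref{sec_HT}.) Since $Y_V$ is $V$-invariant and each $O_{\mathbf s}$ is $\K^{\mathbf n}$-invariant, the intersection $Y_V \cap O_{\mathbf s}$ is a union of $V$-orbits.

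For \cref{main:stratification:orbit}, I would observe that the $V$-orbit of any point $p \in O_{\mathbf s}$ is an affine space of dimension $\dim V - \dim(V \cap W_{\mathbf s})$, and that the polymatroid general position hypothesis of \cref{pgprops} forces $\dim(V \cap W_{\mathbf s}) = \dim V - \rk_P(\mathbf s)$, giving orbits of dimension $\rk_P(\mathbf s)$. The fact that $Y_V \cap O_{\mathbf s}$ is a \emph{single} orbit, rather than a union, follows from $Y_V$ being the closure of the single $V$-orbit $V \cdot \mathbf 0 \subseteq O_{\mathbf n}$: any point of $Y_V \cap O_{\mathbf s}$ arises as a specialization of this orbit-generator, and the $V$-action on $\P^{\mathbf n}$ preserves such specializations, so the stratum is the image of a single coset.

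The nonemptiness claim then has two directions. If $\mathbf s$ is not a combinatorial flat, pick $i$ with $s_i < n_i$ and $\rk_P(\mathbf s + \mathbf e_i) = \rk_P(\mathbf s)$. The dimension computation above forces $V \cap W_{\mathbf s} = V \cap W_{\mathbf s + \mathbf e_i}$, so the $V$-orbit structure on $\K^{\mathbf n}/W_{\mathbf s}$ factors through $\K^{\mathbf n}/W_{\mathbf s + \mathbf e_i}$; equivalently, any specialization of $V \cdot \mathbf 0$ approaching $O_{\mathbf s}$ must already specialize further into $O_{\mathbf s + \mathbf e_i}$, forcing $Y_V \cap O_{\mathbf s} = \emptyset$. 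Conversely, if $\mathbf s$ is a combinatorial flat with basis $\mathbf b \leq \mathbf s$, I would produce a point of $Y_V \cap O_{\mathbf s}$ by a one-parameter degeneration: lift a basis of $\mathbf b$ into $V$ and use the Hassett-Tschinkel coordinate grading to scale the chosen directions toward $O_{\mathbf s}$, checking that the limit lies in the correct stratum. Part \cref{main:stratification:poset} is then immediate from one direction ($\overline{Y_V \cap O_{\mathbf s}} \subseteq \overline{O_{\mathbf s}} = \bigsqcup_{\mathbf s' \leq \mathbf s} O_{\mathbf s'}$), with the reverse inclusion following by iterating the one-parameter degeneration from a fixed basis.

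For \cref{main:stratification:class}, I would compute $[\overline{Y_V \cap O_{\mathbf s}}]$ by intersecting against complementary sub-products $[\P^{\mathbf n - \mathbf b}]$ in the Chow ring of $\P^{\mathbf n}$: a basis $\mathbf b$ of $\mathbf s$ yields a transverse intersection contributing positive multiplicity $c_{\mathbf b} > 0$, while multisets that fail to be bases contribute $0$. I expect this part to be the main obstacle. Unlike the hyperplane case, where each basis contributes exactly $1$ and the entire Chow class can be read off from an SR-style basis count, here the intersection multiplicity can exceed $1$ because of the internal structure of $\P^{n_i}$ (the Hassett-Tschinkel coordinates are not independent linear coordinates). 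Establishing positivity without an explicit enumeration will likely require either a torus-equivariant degeneration of $Y_V$ to a union of products of projective spaces indexed by bases, with multiplicities tracked by the Hassett-Tschinkel grading, or a direct argument that $Y_V$ is Cohen-Macaulay so that transverse intersections compute actual multiplicities.
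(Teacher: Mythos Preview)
Two of your arguments do not work as written. For emptiness when $\mathbf s$ is not a combinatorial flat, equality of stabilizers $V \cap W_{\mathbf s} = V \cap W_{\mathbf s + \mathbf e_i}$ only says that $V$-orbits in the two strata have the same dimension; it does not by itself prevent a limit of $\iota(V)$ from landing in $O_{\mathbf s}$, and your phrase ``specialize further into $O_{\mathbf s + \mathbf e_i}$'' has the closure order backwards ($O_{\mathbf s + \mathbf e_i}$ is the larger stratum). For the single-orbit claim, specializations of a connected orbit can certainly fill out several distinct orbits of the same dimension, so ``the stratum is the image of a single coset'' is unjustified. The paper handles both statements by induction on $N$: one projects away a coordinate $i$ with $\overline s_i < n_i$ (resp.\ $s_j < n_j$), uses \cref{localproduct} to control the combinatorial flats on the smaller ground set, and in \cref{strata} concludes the single-orbit claim from a dimension-plus-connectedness comparison against the resulting explicit product description $\iota(V') \times \prod_{s_i < n_i} O_{s_i}$.

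The more serious gap is structural: your proposed order of operations is essentially reversed. You plan to prove nonemptiness by a direct one-parameter degeneration and treat the Chow class as a separate, harder, final step. But a naive degeneration gives no control over which stratum the limit lands in (you must rule out overshooting into a smaller $O_{\mathbf s'}$), and the paper in fact uses the Chow class as the \emph{input} to nonemptiness. The support (\cref{chowbases}) and positivity (\cref{chowclass:positivity}) of $[Y_V]$ are proved first, the latter by constructing a flat family over $\A^1$ whose general fiber is $Y_V$ and whose special fiber contains the ordinary linear closure $\overline V \subset \P^{\mathbf n}$, which has the multiplicity-free polymatroid class. Nonemptiness (\cref{inclusion}) is then proved by induction on $\dim V$, using truncation by general hyperplanes to reduce to corank-$1$ combinatorial flats $\mathbf s$ with $\mathbf s^\geo = \mathbf 0$; such $\mathbf s$ are independent, so the already-established positivity of basis coefficients forces $[\P^{\mathbf s}]$ to appear with positive coefficient in $[\P^{\mathbf n - \mathbf e_i}] \cdot [Y_V]$, whence $\dim(Y_V \cap \P^{\mathbf s}) = d-1$ and $Y_V \cap O_{\mathbf s} \neq \emptyset$. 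Part \cref{main:stratification:class} for arbitrary $\mathbf s$ then follows from \cref{strata}, which identifies $\overline{Y_V \cap O_{\mathbf s}}$ with a Schubert variety $Y_{V' \times \prod_{s_i < n_i} \K^{s_i}}$ to which \cref{chowbases} and \cref{chowclass:positivity} apply directly.
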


If $\K = \C$, we also give a presentation for the singular cohomology ring of $Y_V$, generalizing a result of \cite{HW17}.

\revision{\begin{dfn}
    Let $P$ be a polymatroid with ground set $E=\{1, \ldots, N\}$. Given two combinatorial flats $\mathbf{s}$ and $\mathbf{s'}$ spanned by independent multisets $\mathbf{b}$ and $\mathbf{b'}$ respectively, we define their \textbf{cup product} as
    \[
    (\mathbf{s}\smile\mathbf{s'}):=\overline{\left(\min\{\mathbf{b}(i)+\mathbf{b'}(i), \rk_P(i)\}\right)_{i=1}^N}.
    \]
\end{dfn}
\begin{rmk}
    Equivalently, in terms of Definition \ref{defnCombFlat}, $\mathbf{s}\smile\mathbf{s'}$ is the combinatorial flat represented by any maximal element of $\{F \vee F'\}$ where $F$ and $F'$ range over flats of the multisymmetric lift $\widetilde{P}$ which represent $\mathbf{s}$ and $\mathbf{s'}$ respectively. Notice that the product of symmetric groups $\mathfrak S := \prod_{i=1}^N \mathfrak S_{E_i}$ acts transitively on the maximal elements of $\{F \vee F'\}$. Therefore $\mathbf{s}\smile\mathbf{s'}$ is well defined, independent of the choices of $\mathbf{b}$ and $\mathbf{b'}$.
\end{rmk}
}
    
\revision{\begin{zb}
    Suppose that $N=1$, and $\rk_P(\{1\}) = n$. Then the lattice $\mathcal L_P$ of combinatorial flats is isomorphic to the chain of length $n$. Any combinatorial flat $i \in [n]$ is also independent in $P$, so the cup product of combinatorial flats is given as
    \[
        i \smile j := \overline{\min(i+j, n)} = \min(i+j,n).
    \]
\end{zb}}

\begin{thm}\label{cohomology2}
  If $V \subset \C^{\mathbf n}$ is a polymatroid general subspace, then the
  singular cohomology ring of $Y_V$ is the graded vector space
  \[ H^*(Y_V; \Q) \cong \bigoplus_{\mathbf s \in \mathcal L_{P}} \Q \cdot y_{\mathbf s},
  \]
  where $\deg y_{\mathbf s} = \rk_P(\mathbf s)$. There are \revision{positive} constants $c_{\mathbf b}$ associated to bases $\mathbf b$ of combinatorial flats such that  multiplication is given by the formula
  \[
    c_{\mathbf b} c_{\mathbf b'} y_{\mathbf s} y_{\mathbf s'} = \begin{cases}
                                     c_{\mathbf b + \mathbf b'} y_{\mathbf s \smile \mathbf s'}, & \text{if $\rk_P(\mathbf s \smile \mathbf s') = \rk_P(\mathbf s) + \rk_P(\mathbf s')$} \\
     0, & \text{otherwise,}                                
    \end{cases}
  \]
  where $\mathbf b$ and $\mathbf b'$ are bases for $\mathbf s$ and $\mathbf s'$, respectively, such that $\mathbf b + \mathbf b'$ is a basis for $\mathbf s \smile \mathbf s'$.
\end{thm}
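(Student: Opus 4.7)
The plan is to derive both the additive structure and the multiplication formula for $H^*(Y_V;\Q)$ from Theorem~\ref{main:stratification} and the closed embedding $i: Y_V \hookrightarrow \P^{\mathbf n}$.

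The additive part is immediate from the affine paving. The strata $Y_V \cap O_{\mathbf s}$ of Theorem~\ref{main:stratification} are affine spaces of complex dimension $\rk_P(\mathbf s)$ indexed by combinatorial flats, with closure order matching $\mathcal L_P$. Induction along the filtration by closed strata via the long exact sequence of pairs (the cells being even-dimensional) yields
\[
H^*(Y_V;\Q) = \bigoplus_{\mathbf s \in \mathcal L_P} \Q \cdot y_{\mathbf s}, \qquad y_{\mathbf s} := [\overline{Y_V \cap O_{\mathbf s}}], \qquad \deg y_{\mathbf s} = \rk_P(\mathbf s).
\]

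For the multiplicative part, I would transfer to the smooth ambient $\P^{\mathbf n}$, whose cohomology ring $\bigotimes_i \Q[h_i]/(h_i^{n_i+1})$ has the coordinate-subspace classes $[\P^{\mathbf b}]$ as a basis. The key intermediate step is a proper-intersection lemma: for any combinatorial flat $\mathbf s$ with basis $\mathbf b$, polymatroid genericity of $V$ (cf.\ \cref{pgprops}) forces $\P^{\mathbf b}$ to meet $Y_V$ properly, with set-theoretic intersection supported on $\overline{Y_V \cap O_{\mathbf s}}$, so that $i^*[\P^{\mathbf b}] = c_{\mathbf b}\, y_{\mathbf s}$ for the same positive multiplicity $c_{\mathbf b}$ appearing in Theorem~\ref{main:stratification}. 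I would prove this by combining the explicit description of the stratum $Y_V \cap O_{\mathbf s}$ from the proof of Theorem~\ref{main:stratification} with the projection formula applied to the pushforward identity $i_* y_{\mathbf s} = \sum_{\mathbf b} c_{\mathbf b}\,[\P^{\mathbf b}]$, which pins down the multiplicity.

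Granted this lemma, multiplicativity of $i^*$ gives, for any bases $\mathbf b, \mathbf b'$ of $\mathbf s, \mathbf s'$,
\[
c_{\mathbf b}\, c_{\mathbf b'}\, y_{\mathbf s}\, y_{\mathbf s'} \;=\; i^*\!\bigl([\P^{\mathbf b}] \cdot [\P^{\mathbf b'}]\bigr).
\]
The rank condition $\rk_P(\mathbf s \smile \mathbf s') = \rk_P(\mathbf s) + \rk_P(\mathbf s')$ is equivalent to the existence of compatible bases $\mathbf b, \mathbf b'$ with $\mathbf b + \mathbf b'$ a basis of $\mathbf s \smile \mathbf s'$; when it holds, the ambient product simplifies to a coordinate-subspace class whose pullback, by the lemma, equals $c_{\mathbf b + \mathbf b'}\, y_{\mathbf s \smile \mathbf s'}$. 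When it fails, $\mathbf b + \mathbf b'$ is not independent for any compatible choice, the ambient intersection fails to meet $Y_V$ properly, and polymatroid genericity forces the pullback to vanish. Well-definedness of the formula across choices of $\mathbf b, \mathbf b'$ follows from the transitivity of the $\prod_i \mathfrak S_{E_i}$-action discussed in the remark preceding the theorem. The main technical obstacle is the proper-intersection lemma $i^*[\P^{\mathbf b}] = c_{\mathbf b}\, y_{\mathbf s}$, which is where polymatroid genericity does the real work; once it is in hand, the rest reduces to routine identifications in $H^*(\P^{\mathbf n})$.
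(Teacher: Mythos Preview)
Your overall strategy---affine paving for the additive structure, then transport of multiplication via $i^*$ from $\P^{\mathbf n}$---is the paper's approach, and once the key lemma is correctly stated your final paragraph is exactly the proof the paper gives (via Proposition~\ref{cohomology1}). But the key lemma as you wrote it is wrong, not just imprecise.

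First, the degrees do not match: the Poincar\'e dual class $[\P^{\mathbf b}]\in H^{2(|\mathbf n|-|\mathbf b|)}(\P^{\mathbf n})$, whereas $y_{\mathbf s}$ lives in degree $2\rk_P(\mathbf s)=2|\mathbf b|$. What you actually need is $i^*(h_1^{b_1}\cdots h_N^{b_N})=c_{\mathbf b}\,y_{\mathbf s}$, with $h_i$ the hyperplane class. Relatedly, your geometric picture ``$\P^{\mathbf b}$ meets $Y_V$ properly with intersection $\overline{Y_V\cap O_{\mathbf s}}$'' is false: since $\mathbf b\leq\overline{\mathbf b}=\mathbf s$, the open stratum $Y_V\cap O_{\mathbf s}$ is not even contained in $\P^{\mathbf b}$ unless $\mathbf b=\mathbf s$, and the codimension count for properness forces $d=|\mathbf n|$.

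Second, and more substantively, you set $y_{\mathbf s}:=[\overline{Y_V\cap O_{\mathbf s}}]$ as a \emph{cohomology} class on the singular variety $Y_V$; absent Poincar\'e duality this has no direct meaning. The paper keeps the two sides separate: the paving gives a basis of $H_*(Y_V)$ by stratum closures, Theorem~\ref{main:stratification}\cref{main:stratification:class} makes the pushforward $i_*$ injective on homology, hence $i^*$ surjective on rational cohomology by the Universal Coefficient Theorem, and then one identifies $i^*(h_1^{b_1}\cdots h_N^{b_N})$ by computing its cap product against the homology basis, obtaining $(h_1^{b_1}\cdots h_N^{b_N})\smallfrown[\overline{Y_V\cap O_{\mathbf s'}}]=c_{\mathbf b}$ if $\overline{\mathbf b}=\mathbf s'$ and $0$ otherwise. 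The class $y_{\mathbf s}$ is then \emph{defined} as $c_{\mathbf b}^{-1}\,i^*(h_1^{b_1}\cdots h_N^{b_N})$, i.e.\ as the dual basis vector to $[\overline{Y_V\cap O_{\mathbf s}}]$ under this pairing, not as a cycle class. With this correction in place, your multiplicativity argument goes through unchanged, and the vanishing in the ``otherwise'' case follows because $\mathbf b+\mathbf b'$ dependent forces the monomial $h^{\mathbf b+\mathbf b'}$ to pair to zero against every homology basis element.
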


\subsection{Organization}
We review polymatroids, multisymmetric matroids, and operations on them in \cref{sec_poly}, before commencing study of combinatorial flats in \cref{comboflats}.

We construct combinatorial flats in \cref{comboflatsminimum}, then consider the effects of deletion and truncation in \cref{comboflatsdeletiontruncation}. These sections are the only portions of \cref{comboflats} required for \cref{sec:polymatroidschubert}.
In the remainder of \cref{comboflats},  we work up to a proof of \cref{combo} (Sections \ref{posetprops} and \ref{comboflatsimplification}) and provide axioms for $\mathcal L_P$ (\cref{comboflataxioms})

\cref{sec:polymatroidschubert} concerns Schubert varieties of subspace arrangements.
Foundational work of Hassett and Tschinkel \cite{HT99} is reviewed in Sections \ref{sec_HT} and \ref{rmk_Gm}.
We construct $Y_V$ in \cref{polymatschubertdfn}, then compute the support of its Chow class in \cref{chowclass}.
In \cref{pgprops}, we discuss ``polymatroid generality'', a condition on $V$ that guarantees $Y_V$ consists of finitely many $V$-orbits.
Assuming this condition, we identify strata of $Y_V$ in \cref{geomproofs}, culminating with a proof of \cref{main:stratification}.
In \cref{sec:topresults}, we prove \cref{cohomology2}.

At the last, we pose a large number of questions about Schubert varieties of subspace arrangements in \cref{sec:problems}.
Both algebraic and geometric aspects of Schubert varieties of subspace arrangements seem markedly more complicated than those of Schubert varieties of hyperplane arrangements, yet examples suggest a good theory lies in wait.
Interested parties are encouraged to seek their fortune.

\subsection{Notation}\label{sec:notation}
Throughout this paper, $E := \{1, \ldots, N\}$.
A \word{multiset} on $E$ is an element of $\N^E$. 
The \word{cardinality} of a multiset $\mathbf s = (s_1, \ldots, s_N)$ is $|\mathbf s| := s_1 + \cdots + s_N$.
If $\widetilde E = E_1 \sqcup \cdots \sqcup E_N$ is a finite set, then  $S \subset \widetilde E$ \word{represents}  $\mathbf s$ if $s_i = |S \cap E_i|$ for each $1 \leq i \leq N$.
Multisets are ordered: $(s_1, \ldots, s_N) \leq (s_1', \ldots, s_N')$  if and only if $s_i \leq s_i'$ for every $1 \leq i\leq N$.
Multisets will always be written in boldface.

Let $\K$ be a field. Products of $\K$-vector spaces and projective spaces will be written $\K^{\mathbf s} := \K^{s_1} \times \cdots \times \K^{s_N}$ and $\P^{\mathbf s} := \P^{s_1} \times \cdots \times \P^{s_N}$, respectively.
Like multisets, elements of $\K^{\mathbf s}$ will be written in boldface; we trust this will cause no confusion.

We write $\mathbf e_1, \ldots, \mathbf e_N$ for the standard basis of $\Z^E$,  and $\mathbf 0$ for the zero vector.
We will regard these as multisets or vectors as needed.

Braces for operations involving one-element sets are often omitted, e.g. we write $S \setminus j$ for $S \setminus \{j\}$ and $\pi_j$ for $\pi_{\{j\}}$.
The symmetric group on a set $S$ is denoted $\mathfrak S_S$.

If $(A_i)_{i \in S}$ is a family of sets indexed by a set $S$, and $T \subset S$, then $\pi_T: \prod_{i \in S} A_i \to \prod_{i \in T}A_i$ is the projection.
We will use the same symbol $\pi_T$ for the projection out of any product indexed by a set containing $T$.
For example, we will write $\pi_{E \setminus N}$ both for the projection $\N^N \to \N^{N-1}$ that forgets the last coordinate of a multiset, and for the projection $\K^{\mathbf s} \to \K^{\pi_{E \setminus N}(\mathbf s)}$ that forgets the last coordinate of a vector in $\K^{\mathbf s}$. Once more, we expect this conventions will cause no confusion.

\subsection{Acknowledgements}
We thank Lucas Gierczak for helping to sort out the relationship between the ``matricubes'' of \cite{AG24} and the objects of the present work (see \cref{matricube}), and Matt Larson for helpful comments on our draft.
We are all partially supported by the Simons Foundation. The first author is partially supported by NSF grant DMS-2039316, and the last author is partially supported by NSF grant DMS-1926686. We are grateful to the Institute for Advanced Study (IAS). The second and third authors also thank the Beijing International Center for Mathematical Research (BICMR) for their hospitality and support during the preparation of this paper. 

\section{Polymatroids}\label{sec_poly}
We review polymatroids, the nearly-equivalent notion of multisymmetric matroids, and some operations on them.

\subsection{}

\noindent A \word{polymatroid} $P$ on a finite set $E = \{1, \ldots, N\}$ is the data of a \word{rank} function $\rk: 2^E \to \Z_{\geq 0}$ that is
\begin{description}
\item[\textit{normalized}] $\rk(\emptyset) = 0$,
\item[\textit{increasing}] if $A \subset B$, then $\rk(A) \leq \rk(B)$, and
\item[\textit{submodular}] if $A, B \subset E$, then $\rk(A \cup B) + \rk(A \cap B) \leq \rk(A) + \rk(B)$.
\end{description}
The \word{rank} of $P$ is $\rk(P) := \rk(E)$, and 
the \word{corank} of a set is $\crk(A) := \rk(P) - \rk(A)$.
If $\rk(A) \leq |A|$ for every $A \subset E$, then $P$ is a \word{matroid}.
A \word{flat} is a subset $F \subset E$ such that $\rk(F \cup i) > \rk(F)$ for all $i \in E \setminus F$.
The intersection of two flats is a flat.

The \word{closure} of $A \subset E$ is $\overline A := \cap_{F \supset A} F$, where $F$ runs over all flats of $P$ containing $A$.
The closure of $A$ is the smallest flat that contains $A$, so $\rk(A) = \rk(\overline A)$.
Call $P$ \word{loopless} if each element of $E$ has positive rank, and \word{simple} if each one-element subset of $E$ is a flat of positive rank.

A \word{basis} of $P$ is a multiset $\mathbf b \in \N^E$ satisfying $|\mathbf b| = \rk(P)$ and $|\pi_A(\mathbf b)| \leq \rk(A)$ for all $A \subsetneq E$.
A polymatroid is determined by its bases because $\rk(A) = \max \{ |\pi_A(\mathbf b)| : \mathbf b \text{ is a basis of $P$}\}$.
A multiset $\mathbf s$ is \word{independent} if there is a basis $\mathbf b$ such that $\mathbf s \leq \mathbf b$, equivalently, if $|\pi_A(\mathbf s)| \leq \rk(A)$ for all $A \subset E$.

When necessary, we will use subscripts to distinguish the rank functions of different polymatroids, e.g.  $\rk_P$  is the rank function of $P$.
\begin{zb}[Free polymatroids]\label{zb:free}
  Let $E = \{1, \ldots, N\}$ and let $\mathbf s \in \N^E$ be a multiset.
  The \word{free polymatroid} $B_{\mathbf s}$ has rank function
    $\rk_{B_{\mathbf s}}(A) = \sum_{i \in A} s_i$.
\end{zb}
\begin{zb}[Realizable polymatroids]\label{zb:geo}
  Let $W_1, \ldots, W_N$ be vector spaces.
  A linear subspace $V \subset \prod_{i=1}^N W_i$ defines a polymatroid $P$ on $E = \{1, \ldots, N\}$ with rank function
  \[ \rk(A) := \dim \pi_A(V) = \codim_V V \cap \ker(\pi_A). \]
  In this case, we say $V$ \word{realizes} $P$.
  The flats $F$ of $P$ are in inclusion-reversing bijection with subspaces $V \cap \ker(\pi_F)$.
  The closure of  $A \subset E$ is the largest  $A' \subset E$ such that $V \cap\ker(\pi_A) = V \cap \ker(\pi_{A'})$.

  Likewise, a subspace arrangement $V_1, \ldots, V_N$ in a vector space $V$ \word{realizes} $P$ if $\rk(A) = \codim_V \cap_{i \in A} V_i$.
  These two notions of realizability are equivalent. Given a subspace $V \subset \prod_i W_i$, take $V_i = \ker \pi_i$. Given a subspace arrangement, choose vector spaces $W_1, \ldots, W_N$ and a map $f: V \to \prod_i W_i$ with $\ker(\pi_i \circ f) = V_i$.
  The image of $f$ is a subspace realizing $P$.
\end{zb}
\begin{zb}[Lifts]\label{zb:partition}
  If $M$ is a matroid on a finite set $\widetilde E = E_1 \sqcup \cdots \sqcup E_N$, then we may define a polymatroid $P$ on $E$ by
  $\rk_P(S) := \rk_M(\cup_{i \in S} E_i)$. In this case, we say that $M$ \word{lifts} $P$.
\end{zb}

\subsection{}
Let $\widetilde E = E_1 \sqcup \cdots \sqcup E_N$ be a finite set. A \word{multisymmetric matroid} is a matroid $M$ on $\widetilde E$ whose rank function is invariant under the natural action of $\mathfrak S := \prod_{i=1}^N \mathfrak S_{E_i}$.
If $F$ is a flat of $M$ and $\sigma \in \mathfrak S$, then $\sigma \cdot F$ is also a flat of $M$.
The \word{geometric part} of a subset $A \subset \widetilde E$ is $A^\geo := \cap_{\sigma \in \mathfrak S} \sigma(A)$, and $A$ is \word{geometric} if $A = A^\geo$.
If $F$ is a flat, then $F^\geo$ is also a flat.
Two key properties of multisymmetric matroids follow.
\begin{lem}\cite[Lemma 2.8]{CHLSW22}\label{rankformula}\label{closure}\label{multisymmetricsubtraction}
  Let $M$ be a multisymmetric matroid on $\widetilde E = E_1 \sqcup \cdots \sqcup E_N$.
  The following equivalent statements all hold:
  \begin{itemize}
  \item If $F$ is a flat of $M$, then $\rk(F) = \rk(F^\geo) + |F \setminus F^\geo|$.
  \item If $A \subset \widetilde E$, then for each $1 \leq i \leq N$, either $\overline{A} \cap E_i = A \cap E_i$ or $\overline A \supset E_i$.
  \item If $F$ is a flat of $M$, then $F \setminus e$ is a flat of rank $\rk(F) - 1$ for any $e \in F \setminus F^\geo$.
  \end{itemize}
\end{lem}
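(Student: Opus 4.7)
The plan is to prove all three statements in the order (2), (3), (1), with each following from its predecessor together with the $\mathfrak{S}$-invariance of $\rk_M$. The common mechanism throughout is the $\mathfrak{S}$-equivariance of closure: since every $\sigma \in \mathfrak{S}$ preserves rank, it is a matroid automorphism, so $\sigma \overline{A} = \overline{\sigma A}$ for all $A \subset \widetilde E$, and in particular $\sigma$ stabilizes $\overline A$ whenever it stabilizes $A$.

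For statement (2), I would suppose $\overline{A} \cap E_i \supsetneq A \cap E_i$ and pick $e \in (\overline{A} \cap E_i) \setminus A$. The stabilizer of $A$ in $\mathfrak{S}_{E_i}$ contains the subgroup $\mathfrak{S}_{A \cap E_i} \times \mathfrak{S}_{E_i \setminus A}$, which acts transitively on $E_i \setminus A$. Applying stabilizer elements to $e$ (and invoking equivariance of closure) shows every element of $E_i \setminus A$ lies in $\overline{A}$; combined with $A \cap E_i \subset \overline{A}$, this gives $E_i \subset \overline{A}$.

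For statement (3), given $e \in F \setminus F^\geo$ with $e \in E_i$, I first observe that $F^\geo \cap E_i$ is $\mathfrak{S}_{E_i}$-invariant, hence either $\emptyset$ or $E_i$; since $e \notin F^\geo$, it must be $\emptyset$, which also forces $E_i \not\subset F$. Then I apply (2) to $A = F \setminus e$: for $j \neq i$, $A \cap E_j = F \cap E_j$; for $j = i$, the alternative $E_i \subset \overline{A} \subset F$ is excluded. Hence $F \setminus e$ is a flat, and $\rk(F) = \rk(F \setminus e) + 1$ from the defining property of a flat. Statement (1) then follows by iteration: pick $e_1, \ldots, e_k$ enumerating $F \setminus F^\geo$, and note that $(F \setminus e_1)^\geo = F^\geo$ (since $F^\geo \subset F \setminus e_1$ and $F^\geo$ is $\mathfrak{S}$-invariant, while $(F \setminus e_1)^\geo \subset F^\geo$ from $F \setminus e_1 \subset F$). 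So (3) reapplies, and after $k$ steps we reach $F^\geo$, having dropped the rank by exactly $k = |F \setminus F^\geo|$.

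The main obstacle is the transitivity argument in step (2) — one must be careful to use the stabilizer of $A$ rather than all of $\mathfrak{S}_{E_i}$, since the latter does not fix $\overline{A}$. Everything else is bookkeeping once equivariance of closure is in hand; the only secondary wrinkle is verifying that the geometric part remains constant through the iteration in (1), so that (3) can be applied repeatedly.
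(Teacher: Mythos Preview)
Your argument is correct. The paper does not supply its own proof of this lemma---it is quoted from \cite[Lemma 2.8]{CHLSW22}---so there is no in-paper argument to compare against. Your route via $\mathfrak{S}$-equivariance of closure, using the stabilizer $\mathfrak{S}_{A\cap E_i}\times\mathfrak{S}_{E_i\setminus A}$ to propagate a single new element of $\overline{A}\cap E_i$ across all of $E_i\setminus A$, is the natural one and matches the spirit of the cited source. The only point worth making explicit (you gesture at it) is that $E_i\subset F$ would force $E_i\subset F^\geo$ because each $\sigma\in\mathfrak{S}$ maps $E_i$ to itself; this is what rules out the second alternative of (2) in your proof of (3). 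The iteration in (1), with the verification that $(F\setminus e_1)^\geo=F^\geo$, is clean.
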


\begin{lem}\cite[Lemma 2.9]{CHLSW22}\label{determinedbygeo}
  Two multisymmetric matroids on $E_1 \sqcup \cdots \sqcup E_N$ are isomorphic if and only if their geometric sets have the same ranks.
\end{lem}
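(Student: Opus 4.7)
The ``only if'' direction is immediate, so the substance lies in the ``if'' direction. The plan is to prove the strengthened statement that for any multisymmetric matroid $M$ on $\widetilde E = E_1 \sqcup \cdots \sqcup E_N$, the rank of an arbitrary $A \subset \widetilde E$ with multiset $\mathbf s = (|A \cap E_i|)_{i=1}^N$ is given by
\[
\rk_M(A) = \rk_P(\mathbf s) := \max\{|\mathbf b| : \mathbf b \leq \mathbf s,\ \mathbf b \text{ independent in } P\},
\]
where $P$ is the polymatroid on $E$ defined by $\rk_P(S) := \rk_M(\sqcup_{i \in S} E_i)$. Since the geometric subsets of $\widetilde E$ are exactly the $\mathfrak S$-invariant ones, which are unions of blocks $E_i$ indexed by subsets of $E$, the assignment $S \mapsto \rk_P(S)$ is precisely the data of ranks on geometric sets. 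Thus the formula will show that $\rk_M$ is determined by its geometric values, yielding the lemma.

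The upper bound $\rk_M(A) \leq \rk_P(\mathbf s)$ is direct: for any independent $B \subset A$ in $M$ with multiset $\mathbf b$ and any $S \subset E$, the restriction $B \cap \sqcup_{i \in S} E_i$ is independent in $M$, so
\[
|\pi_S(\mathbf b)| = |B \cap \sqcup_{i \in S} E_i| \leq \rk_M(\sqcup_{i \in S} E_i) = \rk_P(S),
\]
so $\mathbf b$ is independent in $P$ and $|B| \leq \rk_P(\mathbf s)$. For the lower bound, it suffices by multisymmetry to produce, for every independent multiset $\mathbf b$ in $P$, some set $B \subset \widetilde E$ of multiset $\mathbf b$ that is independent in $M$ (which can then be arranged to lie inside $A$ whenever $\mathbf b \leq \mathbf s$). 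I would argue by induction on $|\mathbf b|$, reducing to showing that an independent set $B'$ realizing $\mathbf b - \mathbf e_i$ (for some $i$ with $b_i > 0$) can be extended by some $e \in E_i \setminus B'$ with $e \notin \overline{B'}$.

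The main obstacle is ruling out the case where this extension fails. By \cref{closure}, only two situations can occur: either $\overline{B'} \cap E_i = B' \cap E_i$, in which case any $e \in E_i \setminus B'$ works, or $\overline{B'} \supset E_i$. Suppose the second. Setting $S := \{j \in E : \overline{B'} \supset E_j\} \ni i$ gives $(\overline{B'})^\geo = \sqcup_{j \in S} E_j$, and applying \cref{rankformula} to the flat $\overline{B'}$ yields
\[
|\mathbf b| - 1 = \rk_M(B') = \rk_M(\overline{B'}) = \rk_P(S) + \sum_{j \notin S} b'_j.
\]
On the other hand, independence of $\mathbf b$ in $P$ forces $\rk_P(S) \geq |\pi_S(\mathbf b)| = \sum_{j \in S} b'_j + 1$, using $b_i = b'_i + 1$ and $b_j = b'_j$ for $j \neq i$. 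Adding $\sum_{j \notin S} b'_j$ to both sides of this inequality and comparing with the previous display collapses to $|\mathbf b| - 1 \leq |\mathbf b| - 2$, a contradiction; so the bad case never occurs, the induction closes, and the formula is proved.
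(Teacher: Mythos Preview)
Your argument is correct. The paper does not give its own proof of this lemma; it is simply cited from \cite{CHLSW22}, so there is nothing to compare against directly. Your approach --- reconstructing $\rk_M(A)$ from the associated polymatroid $P$ via the formula $\rk_M(A)=\rk_P(\mathbf s)$ --- is the natural one, and the appeal to \cref{closure} and \cref{rankformula} to rule out $E_i\subset\overline{B'}$ is exactly right. One very minor point: in the final line, the contradiction is perhaps more transparently stated as $|\mathbf b|-1 \geq |\mathbf b|$ (from substituting $\rk_P(S)\geq |\pi_S(\mathbf b)|$ into the displayed equality and using $b'_j=b_j$ for $j\notin S$), though your phrasing is equivalent after rearranging.
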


\subsection{}
Multisymmetric matroids and polymatroids are related by a much-rediscovered \cite{H72,N86,L77,M75,BCF22,CHLSW22} recipe.
A \word{cage} for a polymatroid $P$ on $E$ is $\mathbf n \in \N^E$ such that $\rk(i) \leq n_i$ for all $i \in E$.
The pair $(P, \mathbf n)$ is a \word{caged polymatroid}\footnote{This terminology originates in \cite{EL23}.}.
\begin{thmdfn}\label{lift}\cite[Theorem 2.10\footnotemark]{CHLSW22}
  \footnotetext{In \cite{CHLSW22}, this is stated in the case when $\rk_P(i) = |E_i|$, but the proof goes through verbatim in the present slightly more general setting.}
  Fix a multiset $\mathbf n = (n_1, \ldots, n_N)$, and let $\widetilde E = E_1 \sqcup \cdots \sqcup E_N$ be a set with $|E_i| = n_i$.
  There is a bijection
  \begin{align*}
    \{\text{caged polymatroids $(P, \mathbf n)$}\} &\overset{\sim}{\longrightarrow} \{\text{multisymmetric matroids on $\widetilde E$}\} \\
    (P, \mathbf n) &\longmapsto \widetilde P,
  \end{align*}
  where $\widetilde P$ is the \word{multisymmetric lift} of $P$, defined by
    \[
    \rk_{\widetilde P}(A) := \min \{\rk_P(B) + |A \setminus \cup_{i \in B} E_i| : B \subset E \}, \quad A \subset \widetilde E.
  \]
\end{thmdfn}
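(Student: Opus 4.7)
The plan is to exhibit an explicit inverse to $(P, \mathbf n) \mapsto \widetilde P$ and verify that the two compositions are identities. The inverse is the construction of \cref{zb:partition}: given a multisymmetric matroid $M$ on $\widetilde E$, set $\rk_P(S) := \rk_M(\cup_{i\in S} E_i)$ for $S \subset E$, with cage $\mathbf n = (|E_1|,\ldots,|E_N|)$. This $P$ inherits normalization, monotonicity, and submodularity from $M$ pulled back along the map $S \mapsto \cup_{i \in S} E_i$, which preserves unions and intersections; cagedness is immediate, since $\rk_P(i) = \rk_M(E_i) \le |E_i| = n_i$.

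Next I would verify that $\widetilde P$ is in fact a multisymmetric matroid. The matroid rank bound $\rk_{\widetilde P}(A) \le |A|$ follows by taking $B = \emptyset$ in the defining minimum; normalization and monotonicity are likewise immediate. For submodularity, choose minimizers $B_1, B_2$ for $A_1, A_2$ and test $B_1 \cup B_2$ and $B_1 \cap B_2$ against $A_1 \cup A_2$ and $A_1 \cap A_2$: submodularity of $\rk_P$ handles the rank terms, while a short inclusion-exclusion on $|A_j \setminus \cup_{i \in B_j} E_i|$ handles the cardinality terms. Multisymmetry is transparent because each $\sigma \in \mathfrak S$ preserves every $E_i$, so $|\sigma(A) \setminus \cup_{i \in B} E_i| = |A \setminus \cup_{i \in B} E_i|$.

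The easy composition is $(P,\mathbf n) \mapsto \widetilde P \mapsto P'$. Evaluating $\rk_{P'}(S) = \rk_{\widetilde P}(\cup_{i\in S} E_i) = \min_B\{\rk_P(B) + |\cup_{i\in S} E_i \setminus \cup_{i \in B} E_i|\}$, the choice $B = S$ yields $\rk_P(S)$; for any other $B$, combining submodularity of $\rk_P$ (giving $\rk_P(S) \le \rk_P(B) + \rk_P(S \setminus B)$) with cagedness and subadditivity of $\rk_P$ on singletons ($\rk_P(S \setminus B) \le \sum_{i \in S \setminus B} n_i = |\cup_{i \in S} E_i \setminus \cup_{i \in B} E_i|$) shows the value is at least $\rk_P(S)$.

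The main obstacle is the reverse composition: for a multisymmetric matroid $M$, if $P$ is the polymatroid constructed above and $\widetilde P$ its lift, I need $\rk_{\widetilde P}(A) = \rk_M(A)$ for every $A \subset \widetilde E$. The inequality $\rk_M(A) \le \rk_{\widetilde P}(A)$ is routine: for any $B$, $\rk_M(A) \le \rk_M(A \cap \cup_{i \in B} E_i) + |A \setminus \cup_{i \in B} E_i| \le \rk_M(\cup_{i \in B} E_i) + |A \setminus \cup_{i \in B} E_i|$. The reverse inequality, equivalent to exhibiting a $B^*$ that achieves the minimum, is where multisymmetry is essential. Taking $B^* := \{i : E_i \subset \overline A\}$, multisymmetry forces $(\overline A)^{\geo} = \cup_{i \in B^*} E_i$, so \cref{rankformula} applied to the flat $\overline A$ yields $\rk_M(A) = \rk_M(\overline A) = \rk_M((\overline A)^{\geo}) + |\overline A \setminus (\overline A)^\geo|$, and the same lemma identifies $\overline A \setminus (\overline A)^\geo$ with $A \setminus (\overline A)^\geo$. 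Combined, $\rk_M(A) = \rk_P(B^*) + |A \setminus \cup_{i \in B^*} E_i|$, completing the bijection.
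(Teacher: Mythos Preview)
Your proof is correct and follows essentially the same route as the paper's (cited) argument: the paper defers the proof to \cite{CHLSW22} and simply notes in the remarks following the theorem that $\widetilde P$ lifts $P$ and that \cref{determinedbygeo} forces uniqueness, which is precisely what your reverse-composition step via \cref{rankformula} with $B^* = \{i : E_i \subset \overline A\}$ unpacks. The only part the paper's surrounding remarks do not address is the verification that $\rk_{\widetilde P}$ is submodular, which you handle correctly by testing $B_1 \cup B_2$ and $B_1 \cap B_2$.
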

\begin{rmk}\label{rmk:liftclosure}
  The matroid lift satisfies $\rk_P(S) = \rk_{\widetilde P}(\cup_{i \in S} E_i)$.
  By applying this fact and \cref{rankformula}, we learn that the closure in $\widetilde P$ of $\cup_{i \in S} E_i$ is $\cup_{i \in \overline S} E_i$, where $\overline S$ is the closure of $S$ in $P$.
  
\end{rmk}
\begin{rmk}
   By \cref{determinedbygeo}, $\widetilde P$ is the unique multisymmetric matroid on $\widetilde E$ that lifts $P$.
\end{rmk}
It is also helpful to understand $\widetilde P$ in terms of bases.
\begin{prop}\label{liftbases}
  Let $(P, \mathbf n)$ be a caged polymatroid with multisymmetric lift $\widetilde P$.
  The bases of $\widetilde P$ are the multisets $\mathbf b \in \{0,1\}^{\widetilde E}$ with $|\mathbf b| = \rk(P)$ such that for all $A \subset E$,
  \[
    |\pi_{\cup_{i \in A} E_i}(\mathbf b)| \leq \rk_P(A).
  \]
  Equivalently, $B \subset \widetilde E$ is a basis of $\widetilde P$ if and only if $\big(|B \cap E_1|, \ldots, |B \cap E_N|\big)$ is a basis of $P$.
\end{prop}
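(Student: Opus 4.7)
The plan is to directly apply the rank formula for $\widetilde P$ supplied by \cref{lift} and translate the matroid-theoretic definition of ``basis'' into the stated inequalities. All work reduces to the observation that a subset $B \subset \widetilde E$ (which is the same data as a $\{0,1\}$-valued multiset $\mathbf b \in \{0,1\}^{\widetilde E}$) is a basis of $\widetilde P$ exactly when it is independent and of maximum cardinality $\rk(\widetilde P) = \rk_{\widetilde P}(\widetilde E) = \rk_P(E)$.

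First I would fix such a $B$ with $|B| = \rk(P)$. Independence in the matroid $\widetilde P$ amounts to $\rk_{\widetilde P}(B) = |B|$. Feeding $B$ into the minimum formula from \cref{lift} gives
\[
\rk_{\widetilde P}(B) = \min_{A \subset E}\bigl(\rk_P(A) + |B \setminus \textstyle\bigcup_{i \in A} E_i|\bigr),
\]
and since the submodularity argument in \cref{lift} already guarantees $\rk_{\widetilde P}(B) \leq |B|$, independence is equivalent to the lower bound
\[
\rk_P(A) + |B \setminus \textstyle\bigcup_{i \in A} E_i| \;\geq\; |B| \qquad \text{for all } A \subset E.
\]
Rewriting $|B| - |B \setminus \bigcup_{i \in A} E_i| = |B \cap \bigcup_{i \in A} E_i| = |\pi_{\cup_{i \in A} E_i}(\mathbf b)|$ converts this into precisely the inequality
$|\pi_{\cup_{i \in A} E_i}(\mathbf b)| \leq \rk_P(A)$
claimed in the proposition. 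This establishes the first characterization.

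For the second characterization I would simply note that $|\pi_{\cup_{i \in A}E_i}(\mathbf b)| = \sum_{i \in A} |B \cap E_i|$, so the tuple $\bigl(|B \cap E_1|, \ldots, |B \cap E_N|\bigr)$ satisfies the defining inequalities for a basis of $P$ (from \cref{sec_poly}) exactly when $B$ satisfies the inequalities of the first characterization, and the sum over $i \in E$ equals $|B| = \rk(P)$ in either direction. Conversely, given any basis $\mathbf c = (c_1,\ldots,c_N)$ of $P$, the condition $c_i \leq \rk_P(i) \leq n_i = |E_i|$ ensures we may choose a subset $B \subset \widetilde E$ with $|B \cap E_i| = c_i$, producing a $\{0,1\}$-valued multiset of the required form. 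The only subtle point is the upper bound $c_i \leq n_i$, which follows from the definition of a cage together with $c_i \leq \rk_P(i)$; I would spell this out to confirm that the correspondence ``$B \leftrightarrow (|B \cap E_i|)_i$'' is well-defined and exhaustive. No step is truly difficult once the rank formula is in hand; the only real work is the algebraic rearrangement above.
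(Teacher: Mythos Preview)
Your argument is correct and in fact cleaner than the paper's for the converse direction. Both proofs handle the forward direction the same way (a basis of $\widetilde P$ has $|\mathbf b \cap \cup_{i\in A}E_i| \leq \rk_{\widetilde P}(\cup_{i\in A}E_i) = \rk_P(A)$), but for the converse the paper takes a detour through the flat structure of $\widetilde P$: it verifies $|\pi_A(\mathbf b)| \leq \rk_{\widetilde P}(A)$ for \emph{every} $A \subset \widetilde E$ by passing to the closure $\overline A$ and invoking \cref{rankformula}. You instead observe that independence of $B$ is simply the equation $\rk_{\widetilde P}(B) = |B|$, feed $B$ into the defining minimum formula of \cref{lift}, and note that the term at $A = \emptyset$ already gives $\rk_{\widetilde P}(B) \leq |B|$, so equality reduces to the family of inequalities indexed by $A \subset E$. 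This bypasses \cref{rankformula} entirely. One small quibble: the upper bound $\rk_{\widetilde P}(B)\le|B|$ is not really a ``submodularity argument'' --- it is immediate from $A=\emptyset$ in the minimum (or from $\widetilde P$ being a matroid) --- so you might phrase that line more carefully.
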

\begin{proof}
  The equivalence of the two characterizations is apparent from the definitions. We prove the characterization by inequalities.
  Any basis must satisfy the proposed inequalities because $\rk_P(A) = \rk_{\widetilde P}(\cup_{i \in A} E_i)$.
  Conversely, suppose $\mathbf b \in \{0,1\}^{\widetilde E}$ satisfies the proposed inequalities. For any  $A \subset \widetilde E$,
  \begin{align*}
    \rk_{\widetilde P}(A) = \rk_{\widetilde P}(\overline{A})
    &= |\overline A \setminus {\overline A}^\geo| + \rk_{\widetilde P}(\overline A^\geo) & \text{ by \cref{rankformula}}&\\
    & = |\overline A \setminus {\overline A}^\geo| + \rk_P(\{i : E_i \subset \overline A\})  &\\
    &\geq |\overline A \setminus {\overline A}^\geo| + \sum_{j \in {\overline A}^\geo} b_j & \text{by hypothesis}&\\
    &\geq |\pi_{\overline A}(\mathbf b)| \geq |\pi_A(\mathbf b)|, & \text{  so $\mathbf b$ is a basis of $\widetilde P$.} &\qquad \qedhere
  \end{align*}
\end{proof}

\begin{rmk}[Realizing lifts]\label{realizablelift}
  Let $\K$ be a field and fix $\mathbf n \in \N^N$.
  A linear subspace $V \subset \K^{\mathbf n}$ realizes both a polymatroid $P$ on $E = \{1, \ldots, N\}$ and a matroid $M$ on $\widetilde E = \{(i,j) : 1 \leq i \leq N, \, 1 \leq j \leq n_i\}$.
  Different  $\prod_{i=1}^N \GL(n_i)$-translates of $V$ may realize different matroids; however, the matroid of a general translate is $\widetilde P$, and the polymatroid of any translate is $P$.
\end{rmk}
\begin{zb}
  Suppose that $\mathbf n = (3)$ and that $P$ is the polymatroid of rank 2 on one element, realized by a line $V \subset \K^{\mathbf n}$. Every line in $\K^{\mathbf n}$ realizes $P$ because every line is codimension 2. However, different lines can have different matroids, e.g. a general line realizes a uniform matroid of rank 1 on 3 elements, while each of the three coordinate lines realizes a boolean matroid on 1 element with two loops adjoined.
\end{zb}

\medskip
In the remainder of this section, we describe how the matroid lift interacts with three operations on polymatroids.
Throughout, we take $(P, \mathbf n)$ to be a caged polymatroid on $E = \{1, \ldots, N\}$, and $\widetilde P$ to be the multisymmetric lift of $(P, \mathbf n)$, on ground set $\widetilde E = E_1 \sqcup \cdots \sqcup E_N$.
\subsection{}\label{operations}
The \word{deletion} of $A \subset E$ is the polymatroid $P \setminus A$ obtained by restricting $\rk_P$ to subsets of $E \setminus A$.
The \word{restriction} of $P$ to $A$ is $P|_A := P \setminus (E \setminus A)$.
We were unable to find a reference for the following well-known description of the flats of $P \setminus A$.
\begin{lem}\label{deletionflats}
  The flats of $P \setminus A$ are all sets of the form $F \setminus A$ such that $F$ is a flat of $P$.
\end{lem}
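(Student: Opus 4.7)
The plan is to prove both inclusions of the claimed description of flats of $P \setminus A$. Write $P' := P \setminus A$; note $\rk_{P'}(S) = \rk_P(S)$ for all $S \subseteq E \setminus A$.

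For the forward direction, I would take a flat $F$ of $P$ and show $F \setminus A$ is a flat of $P'$. Fix $i \in (E \setminus A) \setminus F$; the goal is $\rk_P((F\setminus A) \cup i) > \rk_P(F \setminus A)$. The natural tool is submodularity applied to the pair $S = (F \setminus A) \cup i$ and $T = F$ (for which $S \cap T = F \setminus A$ and $S \cup T = F \cup i$, using $i \notin A$):
\[
\rk_P(F \cup i) + \rk_P(F \setminus A) \leq \rk_P((F \setminus A) \cup i) + \rk_P(F).
\]
If $\rk_P((F \setminus A) \cup i) = \rk_P(F \setminus A)$, this rearranges to $\rk_P(F \cup i) \leq \rk_P(F)$, contradicting the hypothesis that $F$ is a flat of $P$ and $i \notin F$. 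So the rank strictly increases, which is exactly the flat condition for $F \setminus A$ in $P'$.

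For the reverse direction, I would take a flat $G$ of $P'$ and show that its closure $F := \overline{G}$ in $P$ satisfies $F \setminus A = G$. The containment $G \subseteq F \setminus A$ is immediate since $G \subseteq \overline G$ and $G \cap A = \emptyset$. For the other containment, suppose $i \in \overline G \cap (E \setminus A)$ but $i \notin G$. Since $G$ is a flat of $P'$ and $i \in E \setminus A$, we get $\rk_P(G \cup i) = \rk_{P'}(G \cup i) > \rk_{P'}(G) = \rk_P(G)$. On the other hand $G \cup i \subseteq \overline G$, so $\rk_P(G \cup i) \leq \rk_P(\overline G) = \rk_P(G)$, a contradiction.

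There is no real obstacle here; the only subtle point is the forward direction, where one has to resist the temptation to argue ``directly'' and instead invoke submodularity to transfer the flat condition of $F$ to $F \setminus A$. Everything else is bookkeeping around the identity $\rk_{P'} = \rk_P|_{E \setminus A}$ and the definition of closure.
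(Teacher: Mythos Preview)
Your proof is correct. The reverse direction is exactly the paper's argument: a flat $G$ of $P\setminus A$ satisfies $G = \overline G \setminus A$ because any $i \in \overline G \cap (E\setminus A)$ outside $G$ would violate the flat condition. For the forward direction the paper is terser, relying (implicitly) on monotonicity of closure---if $F$ is a flat of $P$ then $\overline{F\setminus A}\subseteq F$, so $\overline{F\setminus A}$ picks up no new elements of $E\setminus A$---whereas you invoke submodularity directly; these are equivalent one-line arguments, so the approaches are essentially the same.
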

\begin{proof}
    A set $G \subset E\setminus A$ is a flat of $P \setminus A$ if and only if for all $i \in E \setminus A$, $\rk_P(G \cup i) > \rk_P(G)$.
  The latter condition is equivalent to saying the closure $F$ of $G$ in $P$ contains no elements of $E \setminus A$, which is equivalent to saying $G = F \setminus A$.
\end{proof}
 Deletion commutes with lifts.
\begin{lem}\label{deletioncommutes} \cite[Lemma 2.14{\footnotemark[\value{footnote}]{}}]{CHLSW22}
 For any subset $A\subset E$, 
 \[
 \widetilde{P \setminus A} = \widetilde P \setminus \cup_{i \in A} E_i,
 \]
where the left-hand lift is taken with respect to $\pi_{E \setminus A}(\mathbf n)$.
\end{lem}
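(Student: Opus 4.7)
The plan is to invoke the uniqueness part of \cref{lift}: since there is exactly one multisymmetric matroid on a given ground-set partition lifting a given caged polymatroid, it suffices to verify that $\widetilde P \setminus \bigcup_{i \in A} E_i$ is a multisymmetric matroid on $\bigsqcup_{i \in E \setminus A} E_i$ and that it lifts $(P \setminus A, \pi_{E \setminus A}(\mathbf n))$. Both matroids live on the same ground set $\widetilde E \setminus \bigcup_{i \in A} E_i = \bigsqcup_{i \in E \setminus A} E_i$, so equality (not merely isomorphism) will follow.

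First I would check multisymmetry. The group $\mathfrak S' := \prod_{i \in E \setminus A} \mathfrak S_{E_i}$ acts on $\bigsqcup_{i \in E \setminus A} E_i$, and this action is the restriction of the $\mathfrak S = \prod_{i \in E} \mathfrak S_{E_i}$-action on $\widetilde E$ that fixes $\bigcup_{i \in A} E_i$ pointwise. Since deletion is defined by restricting the rank function, and $\rk_{\widetilde P}$ is $\mathfrak S$-invariant, its restriction is $\mathfrak S'$-invariant, as required.

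Next I would verify the lifting property. For any $S \subset E \setminus A$, using that deletion preserves ranks of surviving subsets and then applying \cref{rmk:liftclosure} (i.e.\ $\rk_{\widetilde P}(\bigcup_{i \in S} E_i) = \rk_P(S)$):
\[
\rk_{\widetilde P \setminus \bigcup_{i \in A} E_i}\!\bigl(\textstyle\bigcup_{i \in S} E_i\bigr) \;=\; \rk_{\widetilde P}\!\bigl(\textstyle\bigcup_{i \in S} E_i\bigr) \;=\; \rk_P(S) \;=\; \rk_{P \setminus A}(S).
\]
Thus $\widetilde P \setminus \bigcup_{i \in A} E_i$ does lift $P \setminus A$, with respect to the cage $\pi_{E \setminus A}(\mathbf n)$ (whose entries are indeed $|E_i|$ for $i \in E \setminus A$).

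Finally, by the bijection in \cref{lift} applied to the caged polymatroid $(P \setminus A, \pi_{E \setminus A}(\mathbf n))$, the multisymmetric lift is unique, so $\widetilde{P \setminus A} = \widetilde P \setminus \bigcup_{i \in A} E_i$. I do not anticipate a serious obstacle here: the argument is essentially a uniqueness-of-lift bookkeeping, and the only substantive ingredient beyond the definitions is the identity $\rk_{\widetilde P}(\bigcup_{i \in S} E_i) = \rk_P(S)$, which is already recorded in \cref{rmk:liftclosure}.
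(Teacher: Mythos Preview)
Your proof is correct. The paper does not supply its own proof of this lemma (it simply cites \cite{CHLSW22}), but your argument follows precisely the template the paper uses for the analogous \cref{truncationcommutes}: verify that the candidate matroid is multisymmetric on the appropriate partitioned ground set, check that its ranks on geometric sets agree with the rank function of the caged polymatroid in question, and conclude by uniqueness of the multisymmetric lift (equivalently, \cref{determinedbygeo} or the bijection of \cref{lift}).
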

\begin{rmk}[Realizing deletions]
  With notation as in \cref{zb:geo}, if $P$ is realized by $V$, then $P \setminus A$ is realized by $\pi_{E \setminus A}(V)$.
\end{rmk}
\subsection{}\label{sec:truncation} The \word{truncation} of $P$ at a set $S \subset E$ is the polymatroid $T_S P$ on $E$, with rank function
\[
  \rk_{T_SP}(A) :=
  \begin{cases}
    \rk_P(A) - 1, & \text{if $\rk_P(A) = \rk_P(A \cup S)$} \\
    \rk_P(A), & \text{otherwise.}
  \end{cases}
\]
Since $\rk_P(A \cup S) = \rk_P(\overline{A \cup S}) = \rk_P(A \cup \overline S)$, $T_S P = T_{\overline S} P$.
\begin{lem}\label{truncationcommutes}
  If $S \subset E$, then 
  \[
  \widetilde{T_S P} = T_{\cup_{i \in S} E_i} \widetilde P.
  \]
\end{lem}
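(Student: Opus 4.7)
The plan is to apply uniqueness of the multisymmetric lift. By Theorem/Definition \ref{lift} (together with Lemma \ref{determinedbygeo}), any multisymmetric matroid on $\widetilde E$ whose ranks on geometric sets of the form $\cup_{i \in S'} E_i$ agree with $\rk_{T_S P}(S')$ must equal $\widetilde{T_S P}$. Observe first that $(T_S P, \mathbf n)$ is itself caged, since truncation does not increase ranks, so $\widetilde{T_S P}$ is defined. Thus it suffices to verify two things: (i) $T_{\cup_{i \in S} E_i} \widetilde P$ is a multisymmetric matroid on $\widetilde E$, and (ii) for every $S' \subset E$, we have $\rk_{T_{\cup_{i \in S} E_i} \widetilde P}(\cup_{i \in S'} E_i) = \rk_{T_S P}(S')$.

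For (i), write $\widetilde S := \cup_{i \in S} E_i$. The rank function of $T_{\widetilde S} \widetilde P$ is bounded above by that of $\widetilde P$, hence by $|A|$, so $T_{\widetilde S} \widetilde P$ is a matroid. For multisymmetry, note that $\widetilde S$ is $\mathfrak S$-invariant and $\rk_{\widetilde P}$ is $\mathfrak S$-invariant; therefore both $A \mapsto \rk_{\widetilde P}(A)$ and $A \mapsto \rk_{\widetilde P}(A \cup \widetilde S)$ are $\mathfrak S$-invariant, and so is the piecewise-defined rank function of $T_{\widetilde S} \widetilde P$.

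For (ii), the key identity is $\rk_{\widetilde P}(\cup_{i \in T} E_i) = \rk_P(T)$ from Remark \ref{rmk:liftclosure}. Setting $A := \cup_{i \in S'} E_i$, this gives $\rk_{\widetilde P}(A) = \rk_P(S')$ and $\rk_{\widetilde P}(A \cup \widetilde S) = \rk_{\widetilde P}(\cup_{i \in S' \cup S} E_i) = \rk_P(S' \cup S)$. Hence the truncation-triggering condition $\rk_{\widetilde P}(A) = \rk_{\widetilde P}(A \cup \widetilde S)$ defining $T_{\widetilde S} \widetilde P$ at $A$ is exactly the truncation-triggering condition $\rk_P(S') = \rk_P(S' \cup S)$ defining $T_S P$ at $S'$. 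A two-line case split on whether this condition holds yields the desired equality of ranks.

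The main obstacle is essentially nonexistent: the statement reduces to a bookkeeping verification that the piecewise condition appearing in the definition of truncation is preserved under the passage between $\widetilde P$ and $P$ on geometric sets, and this is immediate from the rank identity for geometric sets of the lift. The only mildly subtle points are confirming that $(T_S P, \mathbf n)$ is still caged so that its lift makes sense, and invoking \cref{determinedbygeo} to upgrade the agreement of geometric-set ranks to equality of the two multisymmetric matroids.
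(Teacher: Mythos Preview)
Your proof is correct and follows essentially the same approach as the paper: verify that $T_{\cup_{i \in S} E_i} \widetilde P$ is multisymmetric, check that its ranks on geometric sets $\cup_{i \in S'} E_i$ agree with $\rk_{T_S P}(S')$ via the identity $\rk_{\widetilde P}(\cup_{i \in T} E_i) = \rk_P(T)$, and then invoke \cref{determinedbygeo}. Your write-up is slightly more explicit in noting that $(T_S P,\mathbf n)$ is still caged and in spelling out the $\mathfrak S$-invariance argument, but the structure is identical.
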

\begin{proof}
    Since $\cup_{i \in S} E_i$ is a geometric set of $\widetilde P$, the truncation $T_{\cup_{i \in S} E_i} \widetilde P$ is also a multisymmetric matroid on $\widetilde E$.
  For $A \subset E$,
\begin{align*}
    \rk_{T_{\cup_{i \in S} E_i} \widetilde P}(\cup_{i \in A} E_i)
    &=
    \begin{cases}
      \rk_{\widetilde P}(\cup_{i \in A} E_i) - 1, & \text{if $\rk_{\widetilde P}(\cup_{i \in A} E_i) = \rk_{\widetilde P}(\cup_{i \in A\cup S} E_i)$} \\
      \rk_{\widetilde P}(\cup_{i \in A} E_i), & \text{otherwise}
    \end{cases} \\
    &= \begin{cases}
      \rk_{P}(A) - 1, & \text{if $\rk_{P}(A) = \rk_{P}(A \cup S)$} \\
      \rk_{P}(A), & \text{otherwise}
      \end{cases}\\
    &= \rk_{T_SP}(A).
\end{align*}
  Consequently, $\widetilde{T_S P} = T_{\cup_{i \in S} E_i} \widetilde P$ by \cref{determinedbygeo}.
\end{proof}
  

\begin{rmk}[Realizing truncations]\label{opsgeometry}
  With notation as in \cref{zb:geo}, if $S \subset E$, then $T_S P$ is realized by $V \cap \pi^{-1}_{E \setminus S}(H)$, with $H$ a general hyperplane in $\prod_{i \in S} W_i$.
\end{rmk}

\subsection{}
The \word{reduction} of $P$ at an element $i \in E$ of positive rank is the polymatroid $R_i P$ defined by
\[
  \rk_{R_i P}(A) :=
  \begin{cases} \rk_P(A) - 1, & \rk_P(A) = \rk_P(A \setminus i) + \rk_P(i) \\
     \rk_P(A), & \text{otherwise.}
   \end{cases}
 \]
 
The \word{reduction} of a caged polymatroid is defined by
\[
  R_i(P, \mathbf n) :=
  \begin{cases}
    (P, \mathbf n - \mathbf e_i), & n_i > \rk_P(i) \\
    (R_i P, \mathbf n - \mathbf e_i), & n_i = \rk_P(i).
  \end{cases}
 \]

\begin{lem}\label{reductionlift}
   The multisymmetric lift of $R_i(P, \mathbf n)$ is isomorphic to $\widetilde P \setminus j$, where $j$ is any element of $E_i$.
\end{lem}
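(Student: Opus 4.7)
The plan is to apply \cref{determinedbygeo}: both $\widetilde{R_i(P,\mathbf n)}$ and $\widetilde P \setminus j$ are multisymmetric matroids on $\widetilde E \setminus j$ (after identifying $E_i'$ with $E_i \setminus j$), so it suffices to verify that their rank functions agree on every geometric set. Under the smaller symmetric group $\mathfrak S_{E_1} \times \cdots \times \mathfrak S_{E_i \setminus j} \times \cdots \times \mathfrak S_{E_N}$, geometric subsets of $\widetilde E \setminus j$ are naturally indexed by $T \subset E$: when $i \notin T$ the set is $\cup_{k \in T} E_k$, and when $i \in T$ the set is $(\cup_{k \in T} E_k) \setminus j$. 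In the lift $\widetilde{R_i(P,\mathbf n)}$, both of these correspond to $\cup_{k \in T} E_k'$, which by \cref{rmk:liftclosure} has rank $\rk_{R_i(P,\mathbf n)}(T)$.

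For $i \notin T$, I would simply observe that both sides equal $\rk_P(T)$: on the lift-side, the positive rank hypothesis on $i$ rules out the nontrivial case of the definition of $R_i P$; on the deletion side, $j$ is absent from the set. The bulk of the work is the case $i \in T$, which requires computing $\rk_{\widetilde P}((\cup_{k \in T} E_k) \setminus j)$. I plan to do this by counting bases: using \cref{liftbases}, a basis of $\widetilde P|_{\cup_{k \in T} E_k}$ avoids $j$ if and only if it realizes a basis $\mathbf b$ of $P|_T$ with $b_i \leq n_i - 1$. An elementary exchange argument shows such a basis exists exactly when $\rk_P(T) < \rk_P(T\setminus i) + \rk_P(i)$ or $n_i > \rk_P(i)$ holds.

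Putting this together: in Case 1 ($n_i > \rk_P(i)$), the cage slack forces every basis of $P|_T$ to satisfy $b_i \leq \rk_P(i) < n_i$, so $j$ is always avoidable and $\rk_{\widetilde P}((\cup_{k \in T} E_k) \setminus j) = \rk_P(T) = \rk_{R_i(P,\mathbf n)}(T)$. In Case 2 ($n_i = \rk_P(i)$), $j$ is avoidable exactly when $\rk_P(T) < \rk_P(T\setminus i) + \rk_P(i)$, which is precisely the condition under which $\rk_{R_i P}(T) = \rk_P(T)$; in the complementary case, both sides drop by one. A sentence handles the boundary situation where the rank of $\widetilde P|_{\cup_{k\in T}E_k \setminus j}$ could a priori be read off geometrically via \cref{closure}, confirming it's either $\rk_P(T)$ or $\rk_P(T)-1$ as claimed.

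The main obstacle is the exchange argument producing a basis of $P|_T$ with $b_i < n_i$ under the strict submodular inequality; once this characterization is in hand, every case becomes a direct comparison against the definition of $R_i(P,\mathbf n)$.
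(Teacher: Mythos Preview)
Your proposal is correct and shares the paper's overall skeleton: reduce via \cref{determinedbygeo} to checking ranks on geometric subsets of $\widetilde E \setminus j$, dispose of the $i \notin T$ case immediately, and then analyze $(\cup_{k\in T}E_k)\setminus j$ when $i\in T$.

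The difference lies in how the $i\in T$ case is handled. The paper splits on whether $\rk_P(A)=\rk_P(A\setminus i)+\rk_P(i)$: in the equality case it argues that every element of $E_i$ is a coloop of $\widetilde P|_{\cup_{k\in A}E_k}$, and in the strict case it invokes the closure description of \cref{closure} to see that deleting $j$ does not change the span. You instead characterize, via \cref{liftbases}, when a basis of $\widetilde P|_{\cup_{k\in T}E_k}$ can avoid $j$ (namely, when some basis $\mathbf b$ of $P|_T$ has $b_i<n_i$), and produce such a basis by extending a basis of $P|_{T\setminus i}$. This is a genuinely different mechanism, though closely related: ``$j$ is a coloop'' and ``no basis avoids $j$'' are of course equivalent.

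What your route buys is a clean separation of the two cage cases in the definition of $R_i(P,\mathbf n)$. The paper's coloop argument, as written, tacitly uses $|E_i|=\rk_{\widetilde P}(E_i)$, i.e.\ $n_i=\rk_P(i)$; when $n_i>\rk_P(i)$ the set $E_i$ is dependent in $\widetilde P$ and the coloop conclusion does not follow from $\rk_P(A)=\rk_P(A\setminus i)+\rk_P(i)$ alone. Your Case~1 handles this directly: $b_i\le\rk_P(i)<n_i$ for every basis, so $j$ is always avoidable and the rank stays $\rk_P(T)$, matching $R_i(P,\mathbf n)=(P,\mathbf n-\mathbf e_i)$. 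So your argument is not only correct but slightly more careful on this point.
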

\begin{proof}
  We may think of both matroids as being multisymmetric on $\widetilde E \setminus j$.
  Geometric subsets of $\widetilde E \setminus j$ are of the form $(\cup _{k \in A} E_k) \setminus j$, with $A \subset E$.
  By \cref{determinedbygeo} and \cref{rmk:liftclosure}, it suffices to show that
  \[
    \rk_{\widetilde P \setminus j}((\cup_{k \in A} E_k) \setminus j) = \rk_{R_i P}(A)
  \]
  for all $A \subset E$.
  If $i \not \in A$, then equality plainly holds, so we henceforth assume $i \in A$.
  
  Suppose that $\rk_P(A) = \rk_P(A \setminus i) + \rk_P(i)$.
  In this case, $\rk_{\widetilde P}(A)(\cup_{k \in A} E_k) = \rk_{\widetilde P}(\cup_{k \in A \setminus i} E_k) + \rk_{\widetilde P}(E_i)$, so every element of $E_i$ is a coloop of $\widetilde P |_{\cup_{k \in A} E_k}$.
    Hence,
  \[
    \rk_{\widetilde P \setminus j}((\cup_{k \in A} E_k) \setminus j) = \rk_{\widetilde P}(\cup_{k \in A} E_k) - 1 = \rk_P(A) - 1 = \rk_{R_i P}(A).
  \]
  
  Otherwise, suppose that $\rk_P(A) < \rk_P(A \setminus i) + \rk_P(i)$.
  In this case, \cref{closure} implies that $\overline{(\cup_{k \in A} E_k) \setminus j} \supset \cup_{k \in A} E_k$ in $\widetilde P$.
  Hence,
  \[
    \rk_{\widetilde P \setminus j}((\cup_{k \in A} E_k) \setminus j)
    = \rk_{\widetilde P}((\cup_{k \in A} E_k) \setminus j)
    = \rk_{\widetilde P}((\cup_{k \in A} E_k))
    = \rk_P(A) = \rk_{R_i P}(A).
  \qedhere \]
\end{proof}
We defer discussion of realizing reductions to \cref{rmk:realizingreduction}, where we will see it in action en route to proving \cref{combo}(ii).
\section{Combinatorial flats}\label{comboflats}
We introduce the poset of combinatorial flats of a polymatroid $P$.
We construct the poset using a chosen lift $\widetilde P$ in \cref{comboflatsminimum}, then record the effects of deletion and truncation in \cref{comboflatsdeletiontruncation}.
In Sections \ref{posetprops} and \ref{comboflatsimplification}, we establish order-theoretic properties and consider the effects of reduction, culminating in a proof of \cref{combo}.
Finally, in \cref{comboflataxioms}, we axiomatize posets of combinatorial flats. This provides a cryptomorphic definition of simple polymatroids that generalizes the cryptomorphism of simple matroids and geometric lattices.

Our results on Schubert varieties of subspace arrangements depend only on Sections \ref{comboflatsminimum} and \ref{comboflatsdeletiontruncation}, so geometrically-inclined readers may go directly to \cref{sec:polymatroidschubert} after reading these.

\medskip
Throughout \cref{comboflats}, we fix notations:
\begin{itemize}
\item $(P, \mathbf n)$ is a caged polymatroid on $E = \{1, \ldots, N\}$ with rank function $\rk_P$,
\item $\widetilde P$ is the multisymmetric lift of $(P, \mathbf n)$, on ground set $\widetilde E = E_1 \sqcup \cdots E_N$, and
\item $\rk_P(\mathbf s)$ is the rank of a multiset $\mathbf s$, in the sense of \cref{intropolymatroid}.
\end{itemize}

\subsection{} \label{comboflatsminimum} We construct the poset of combinatorial flats and note some properties of its rank function.
\begin{dfn}\label{defnCombFlat}
    A \word{combinatorial flat} of $(P, \mathbf n)$ is a multiset $\mathbf s \in \N^E$ such that there exists a flat $F$ of $\widetilde P$ satisfying $\mathbf s = (|F \cap E_1|, \ldots, |F \cap E_N|)$.
    (Rephrased in the language of \cref{sec:notation}, $\mathbf s$ is a combinatorial flat if it is represented by a flat of $\widetilde P$.)
\end{dfn}

Ordered by inclusion, the combinatorial flats of $(P, \mathbf n)$ form the \word{poset of combinatorial flats}, denoted $\mathcal L_{P, \mathbf n}$.
Observe that $\mathcal L_{P, \mathbf n}$ is isomorphic to the quotient poset $\mathcal L_{\widetilde P} / \mathfrak S$, in which $\mathfrak S \cdot F \geq \mathfrak S \cdot F'$ if and only if there is $\sigma \in \mathfrak S$ such that $\sigma \cdot F \supset F'$.
Since the action of $\mathfrak S$ on $\mathcal L_{\widetilde P}$ is rank-preserving, $\mathcal L_{P, \mathbf n}$ is a graded poset,
and the rank of $\mathbf s \in \mathcal L_{P, \mathbf n}$ is equal to $\rk_{\widetilde P}(S)$, for any $S \in \mathcal L_{\widetilde P}$ representing $\mathbf s$.

More generally, if $\mathbf s \leq \mathbf n$ is a multiset represented by $S \subset \widetilde E$, then we may define its rank by $\rk_{P, \mathbf n}(\mathbf s) := \rk_{\widetilde P}(S)$.
The \word{closure} of $\mathbf s$ is the combinatorial flat $\overline{\mathbf s} = (\overline s_1, \ldots, \overline s_N)$ represented by $\overline S$, and satisfies $\rk_{P, \mathbf n}(\mathbf s) = \rk_{P, \mathbf n}(\overline{\mathbf s})$. The \word{geometric part} of $\mathbf s$ is $\mathbf s^\geo$, the multiset represented by $S^\geo$.
Multisymmetry of $\widetilde P$ guarantees these definitions do not depend on choice of $S$.

Viewing $\mathcal L_{P, \mathbf n}$ as a quotient and applying \cref{rankformula}, we learn that rank and closure of multisets have the following properties:
\begin{lem}\label{combrankformula}\hfill
  \begin{itemize}
  \item If $\mathbf s$ is a combinatorial flat of $(P, \mathbf n)$, then $\rk_{P,\mathbf n}(\mathbf s) = \rk_{\mathbf n}(\mathbf s^\geo) + |\mathbf s -\mathbf s^\geo|$.
  \item If $\mathbf a \leq \mathbf n$ is a multiset, then for each $1 \leq i \leq N$, either $\overline{a}_i = a_i$ or $\overline{a}_i = n_i$.
  \item If $\mathbf s$ is a combinatorial flat of $(P, \mathbf n)$ and $0 < s_i < n_i$, then $\mathbf s - \mathbf e_i$ is also a combinatorial flat.
  \end{itemize}
\end{lem}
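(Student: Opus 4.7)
The plan is to recognize that each of the three claims is the translation, through the quotient map $\mathcal L_{\widetilde P} \twoheadrightarrow \mathcal L_{P,\mathbf n}$, of a corresponding bullet point in \cref{rankformula}. The only preparation needed is the following dictionary: if $S \subset \widetilde E$ represents $\mathbf s \in \N^E$, then $S^\geo$ represents $\mathbf s^\geo$ (immediate from the definitions), and $S^\geo \cap E_i$ is either empty or all of $E_i$, since $S^\geo$ is invariant under the action of $\mathfrak S_{E_i}$.

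For (i), I would pick a flat $F$ of $\widetilde P$ representing $\mathbf s$ (which exists by the definition of combinatorial flat). Then $\rk_{P,\mathbf n}(\mathbf s) = \rk_{\widetilde P}(F)$, and the first bullet of \cref{rankformula} rewrites this as $\rk_{\widetilde P}(F^\geo) + |F \setminus F^\geo|$. Since $F^\geo$ represents $\mathbf s^\geo$, the first summand equals $\rk_{\mathbf n}(\mathbf s^\geo)$, and a block-by-block comparison of cardinalities shows the second summand equals $|\mathbf s - \mathbf s^\geo|$.

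For (ii), I would choose any $S \subset \widetilde E$ representing $\mathbf a$ and apply the second bullet of \cref{rankformula} to $S$: for each $i$, either $\overline S \cap E_i = S \cap E_i$, yielding $\overline{a}_i = a_i$, or $\overline S \supset E_i$, yielding $\overline{a}_i = n_i$. For (iii), I would take a flat $F$ representing $\mathbf s$; the hypothesis $s_i < n_i$ gives $|F \cap E_i| < |E_i|$, which together with the dichotomy above forces $F^\geo \cap E_i = \emptyset$, while $s_i > 0$ supplies some $e \in F \cap E_i \subset F \setminus F^\geo$. The third bullet of \cref{rankformula} then guarantees that $F \setminus e$ is a flat, and it clearly represents $\mathbf s - \mathbf e_i$, which is therefore a combinatorial flat.

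There is no substantial obstacle: once the dictionary between flats of $\widetilde P$ and combinatorial flats is in place, each part is a one-step invocation of \cref{rankformula}. The only delicate point is the dichotomy $F^\geo \cap E_i \in \{\emptyset, E_i\}$ used in part (iii), but this follows immediately from the $\mathfrak S_{E_i}$-invariance of $F^\geo$.
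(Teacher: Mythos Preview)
Your proposal is correct and is exactly the approach the paper takes: the paper's proof is a single sentence stating that these properties follow by viewing $\mathcal L_{P,\mathbf n}$ as the quotient $\mathcal L_{\widetilde P}/\mathfrak S$ and invoking \cref{rankformula}, and you have faithfully unpacked that sentence bullet by bullet.
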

From submodularity of $\rk_{\widetilde P}$, we also obtain:
\begin{lem}\label{stupidbound}
  If $\mathbf s \leq \mathbf n - \mathbf e_i$ is a multiset, then 
  $\rk_{P, n}(\mathbf s + \mathbf e_i) \leq \rk_{P, \mathbf n}(\mathbf s) + 1$.
\end{lem}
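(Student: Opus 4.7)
My plan is to pull the statement back to the multisymmetric lift $\widetilde P$ and then invoke a short matroid submodularity argument. Concretely, I would first choose any representative $S \subset \widetilde E$ of $\mathbf s$. Since $s_i < n_i = |E_i|$, there exists some $j \in E_i \setminus S$, and the set $S \cup \{j\}$ then represents $\mathbf s + \mathbf e_i$. By the very definition of $\rk_{P,\mathbf n}$ via the lift (set out just before \cref{combrankformula}), this reduces the claim to showing $\rk_{\widetilde P}(S \cup \{j\}) \leq \rk_{\widetilde P}(S) + 1$.

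The second step is to apply submodularity of $\rk_{\widetilde P}$ to the subsets $S$ and $\{j\}$ of $\widetilde E$. Since $S \cap \{j\} = \emptyset$, submodularity yields
\[
\rk_{\widetilde P}(S \cup \{j\}) \;\leq\; \rk_{\widetilde P}(S) + \rk_{\widetilde P}(\{j\}) - \rk_{\widetilde P}(\emptyset) \;\leq\; \rk_{\widetilde P}(S) + 1,
\]
where the last inequality uses $\rk_{\widetilde P}(\emptyset) = 0$ together with the fact that any singleton in a matroid has rank at most one. Translating back, this is exactly the desired bound $\rk_{P,\mathbf n}(\mathbf s + \mathbf e_i) \leq \rk_{P,\mathbf n}(\mathbf s) + 1$.

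I do not foresee any real obstacle: the only point requiring care is that ranks of multisets are well-defined independent of the choice of representative $S$, but this is already guaranteed by the multisymmetry of $\widetilde P$ noted earlier in the section. One could even bypass submodularity entirely by citing the standard matroid fact that adjoining a single element increases rank by at most one, but routing through submodularity keeps the argument entirely inside the framework already established.
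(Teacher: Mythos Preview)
Your proposal is correct and is exactly the argument the paper has in mind: the paper simply states that the lemma follows ``from submodularity of $\rk_{\widetilde P}$'' without further detail, and your write-up supplies precisely those details (choose a representative $S$, adjoin a single element $j \in E_i \setminus S$, and apply submodularity or the unit-increase property of matroid rank).
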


A \word{basis} of a multiset $\mathbf s$ is an independent multiset of $P$ maximal among those contained in $\mathbf s$.
\begin{lem}\label{indeprank}
  If $\mathbf b$ is a basis of a multiset $\mathbf s$, then $\rk_{P, \mathbf n}(\mathbf s) = |\mathbf b|$.
  If $\mathbf s$ is a combinatorial flat, then the bases of $\mathbf s$ are precisely multisets of the form $\mathbf b' + \mathbf s - \mathbf s^\geo$, where $\mathbf b'$ is a basis of $\mathbf s^\geo$.
\end{lem}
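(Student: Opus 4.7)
The plan is to translate everything to the multisymmetric lift $\widetilde P$ and apply \cref{liftbases,rankformula}. The key observation is that \cref{liftbases}, combined with the multisymmetry of $\widetilde P$, yields a correspondence: a subset $B \subset \widetilde E$ is independent in $\widetilde P$ if and only if its representing multiset $(|B \cap E_i|)_{i \in E}$ is independent in $P$, and this depends only on the multiset. So independence of multisets in $P$ is interchangeable with independence of subsets in $\widetilde P$.

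For the first statement, I would fix $S \subset \widetilde E$ representing $\mathbf s$, and choose $B \subset S$ representing $\mathbf b$ (possible since $b_i \leq s_i = |S \cap E_i|$). The correspondence above makes $B$ independent in $\widetilde P$. I would then check that $B$ is a \emph{maximal} independent subset of $S$ in $\widetilde P$: if some $e \in S \setminus B$, say $e \in E_j$, had $B \cup \{e\}$ independent, then $\mathbf b + \mathbf e_j$ would be an independent multiset of $P$, strictly larger than $\mathbf b$ and still $\leq \mathbf s$ (since $s_j \geq |B \cap E_j| + 1 = b_j + 1$), contradicting maximality of $\mathbf b$. Hence $|\mathbf b| = |B| = \rk_{\widetilde P}(S) = \rk_{P, \mathbf n}(\mathbf s)$.

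For the second statement, assume $\mathbf s$ is a combinatorial flat and choose $S$ a flat of $\widetilde P$ representing $\mathbf s$; then $S^\geo$ and $S \setminus S^\geo$ represent $\mathbf s^\geo$ and $\mathbf s - \mathbf s^\geo$, respectively. By the rank formula in \cref{rankformula}, $\rk_{\widetilde P}(S) = \rk_{\widetilde P}(S^\geo) + |S \setminus S^\geo|$, which forces every matroidal basis $B$ of $S$ to have the form $B = B^\geo \cup (S \setminus S^\geo)$ with $B^\geo$ a basis of $S^\geo$ (since $|B \cap S^\geo| \leq \rk(S^\geo)$ and $|B \cap (S \setminus S^\geo)| \leq |S \setminus S^\geo|$ must both be equalities). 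Translating through the multiset--subset correspondence and applying the argument of the first paragraph to identify matroidal bases of $S$ (resp.\ $S^\geo$) with bases of $\mathbf s$ (resp.\ $\mathbf s^\geo$), we conclude that bases of $\mathbf s$ are exactly the multisets $\mathbf b^\geo + (\mathbf s - \mathbf s^\geo)$ with $\mathbf b^\geo$ a basis of $\mathbf s^\geo$. The converse direction---that every such sum is indeed a basis of $\mathbf s$---follows because the corresponding subset $B^\geo \cup (S \setminus S^\geo)$ has size $\rk_{\widetilde P}(S)$ and is independent.

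The main obstacle is really just bookkeeping: ensuring that the notions of ``maximality'' for multisets and their representing subsets line up. Since multisymmetry guarantees that rank and independence of a subset depend only on its multiset, this translation is clean, and both directions of the second claim reduce to the rigidity imposed by the equality case in the rank formula of \cref{rankformula}.
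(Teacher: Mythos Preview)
Your proof is correct and follows essentially the same route as the paper's: represent $\mathbf s$ by $S \subset \widetilde E$, use \cref{liftbases} to match (in)dependence of multisets in $P$ with (in)dependence of subsets in $\widetilde P$, and then invoke \cref{rankformula} to see that $S \setminus S^\geo$ consists of coloops of $S$ (your cardinality argument $|B \cap S^\geo| \leq \rk(S^\geo)$, $|B \cap (S\setminus S^\geo)| \leq |S\setminus S^\geo|$ with equality forced is exactly this). Your write-up is in fact more careful than the paper's in justifying why maximality of $\mathbf b$ among independent multisets in $\mathbf s$ translates to maximality of $B$ among independent subsets of $S$.
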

\begin{proof}
  Choose $S \subset \widetilde E$ representing $\mathbf s$.
  If $\mathbf b$ is a basis of $\mathbf s$,
  then $\mathbf b$ is represented by a basis $B$ of $S$ in $\widetilde P$ by \cref{liftbases}.
  The first assertion holds because $\rk_{P, \mathbf n}(\mathbf s) = \rk_{\widetilde P}(S) = |B| = |\mathbf b|$.

  If $\mathbf s$ is a combinatorial flat, then $S \setminus S^\geo$ consists of coloops of the flat $S$ by \cref{rankformula}.
  Hence, bases of $S$ are of the form $B' \cup (S \setminus S^\geo)$ with $B'$ a basis for $S^\geo$.
  The second assertion follows.
\end{proof}

\begin{rmk}\label{welldfndrk}
  \cref{indeprank} shows that all bases of a multiset $\mathbf s$ have the same cardinality, so $\rk_P(\mathbf s)$, as defined in \cref{intropolymatroid}, is well-defined.
\end{rmk}
\begin{rmk}[Equivalence of definitions]\label{equivalence}
  \cref{indeprank} explains the equivalence of the different definitions of ``rank'' and ``combinatorial flat'' given in \cref{intropolymatroid} and \cref{comboflatsminimum}.
  The first statement of \cref{indeprank} says exactly that $\rk_{P, \mathbf n}(\mathbf s) = \rk_P(\mathbf s)$ for any multiset $\mathbf s$. When $\mathbf s \in \mathcal L_{P, \mathbf n}$, this means $\rk_P(\mathbf s)$ is also equal to the rank of $\mathbf s$ in the graded poset $\mathcal L_{P, \mathbf n}$.
  We will henceforth write only $\rk_P(\mathbf s)$.

  For equivalence of the two definitions of ``combinatorial flat'', choose $S \subset \widetilde E$ representing $\mathbf s$.
  If $s_i < n_i$, then $\mathbf s + \mathbf e_i$ is represented $S \cup e$ for any $e \in E_i \setminus S$.
  In the sense of \cref{intropolymatroid}, $\mathbf s$ is a combinatorial flat if and only if
  \[
    \rk_{\widetilde P}(S \cup e) = \rk_{P, \mathbf n}(\mathbf s + \mathbf e_i) = \rk_P(\mathbf s + \mathbf e_i)  > \rk_P(\mathbf s) = \rk_{P, \mathbf n}(\mathbf s) = \rk_{\widetilde P}(S)
  \]
  for all $e \in E_i$ with $s_i < n_i$.
  This is precisely the statement that $S$ is a flat of $\widetilde P$, which is the notion of ``combinatorial flat'' defined in \cref{comboflatsminimum}.
\end{rmk}
\begin{rmk}[Geometry of combinatorial flats]\label{geometryofcf}
  Suppose that the polymatroid $P$ is realized by a subspace arrangement $V_1, \ldots,~V_N~\subset~V$.
  Rephrasing \cref{realizablelift} in the language of arrangements, if $\mathbf n$ is a cage for $P$, then $\widetilde P$ is realized by any arrangement $\{H_{ij} : 1 \leq i \leq N, 1 \leq j \leq n_i\}$ in which $H_{i1},\ldots, H_{in_i}$ are general hyperplanes containing $V_i$.
  The lattice of flats $\mathcal L_{\widetilde P}$ has an action by a product of symmetric groups $\mathfrak S$, and the quotient $\mathcal L_{\widetilde P} / \mathfrak S$ is isomorphic to $\mathcal L_P$ (see also \cref{comboflatsminimum}).

  Alternatively, consider the subspace arrangement $\{V_{ij} : 1 \leq i \leq N, 1 \leq j \leq \codim V_i\}$, where $V_i = V_{i,\codim_V V_i} \subset \cdots \subset V_{i,2} \subset V_{i,1} \subsetneq V$ is a general flag.
  The poset of flats (\emph{not} combinatorial flats) of the polymatroid associated to this arrangement is isomorphic to $\mathcal L_P$. If $P$ is simple and $\mathbf n = (\rk_P(1), \ldots, \rk_P(N))$, then the rank of $\mathbf s \in \mathcal L_{P, \mathbf n}$ is $\rk_P(\mathbf s) = \codim_V V_{1,s_1} \cap \cdots \cap V_{N, s_N}$.
\end{rmk}
\begin{rmk}
  \label{matricube}
  \word{Matricubes} were recently introduced by \cite{AG24} to model intersection patterns of flags of linear subspaces.
  If $(P, \mathbf n)$ is a caged polymatroid, then the function $\rk_P$ on multisets contained in $\mathbf n$ defines a matricube.
  The \word{flats} \cite[Definition 3.1]{AG24} of this matricube are the combinatorial flats of $P$, and the \word{independents} \cite[Definition 5.1]{AG24} of the matricube are the independent multisets of $P$.

  From the flag perspective of \cref{geometryofcf}, the difference between matricubes and combinatorial flats is that matricubes model \emph{all} collections of flags, while combinatorial flats model only collections of \emph{general} flags.
  Combinatorially, this difference manifests in several ways: combinatorial flats obey \cref{combrankformula}, are top-heavy, and admit a reasonable notion of basis, but none of these properties hold for matricubes in general. (Failure of top-heaviness and \cref{combrankformula} for matricubes can be seen in the right-hand example of \cite[Section 3.1]{AG24}. The difficulties of bases for matricubes are discussed in \cite[Section 9.1]{AG24}.)
\end{rmk}

\subsection{}\label{comboflatsdeletiontruncation}
We record the effects deletion and truncation on $\mathcal L_{P, \mathbf n}$.
\begin{lem}\label{localproduct}
  Let $i \in E$. If $\mathbf s \leq \mathbf n$ is a multiset with $\overline s_i < n_i$, then $\pi_{E \setminus i}(\mathbf s)$ is a combinatorial flat of $(P \setminus i, \pi_{E \setminus i}(\mathbf n))$ if and only if $\mathbf s$ is a combinatorial flat of $(P, \mathbf n)$.
\end{lem}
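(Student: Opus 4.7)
The proof splits naturally into two directions, and the forward direction does not require the hypothesis: given a flat $F \subset \widetilde E$ of $\widetilde P$ representing $\mathbf s$, \cref{deletionflats} together with \cref{deletioncommutes} imply that $F \setminus E_i$ is a flat of $\widetilde{P \setminus i}$ representing $\pi_{E \setminus i}(\mathbf s)$, which is what is needed.

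For the converse, I plan to fix any representative $S \subset \widetilde E$ of $\mathbf s$ and decompose it as a disjoint union $S = G \sqcup T_0$ with $G = S \setminus E_i$ and $T_0 = S \cap E_i$, then prove that $S$ itself is a flat of $\widetilde P$. Since $\pi_{E \setminus i}(\mathbf s)$ is a combinatorial flat of $P \setminus i$ and $\widetilde{P \setminus i} = \widetilde P \setminus E_i$ is multisymmetric (by \cref{deletioncommutes}), every subset of $\widetilde E \setminus E_i$ representing $\pi_{E \setminus i}(\mathbf s)$ is a flat of $\widetilde P \setminus E_i$; in particular $G$ is such a flat.

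The first substep is to upgrade this: $G$ is a flat of $\widetilde P$ itself. By \cref{closure}, the $E_i$-part of $\overline G$ (computed in $\widetilde P$) is either $\emptyset$ or all of $E_i$; the latter option would force $\overline{G \cup T_0} \supset E_i$ and hence $\overline s_i = n_i$, contradicting the hypothesis. So $\overline G = G$ in $\widetilde P$.

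The main step, and the place I expect to need the most care, is to show that $F := \overline{G \cup T_0}$ equals $G \cup T_0$. The same dichotomy from \cref{closure}, combined with $\overline s_i < n_i$, forces $F \cap E_i = T_0$; since $E_i \not\subset F$, every element of $T_0$ lies in $F \setminus F^\geo$, so iterated application of \cref{multisymmetricsubtraction} yields that $F \setminus T_0 = F \setminus E_i$ is a flat of $\widetilde P$ of rank $\rk F - |T_0|$. The hard part is then to sandwich $\rk F$: submodularity (using $G \cap T_0 = \emptyset$ and $\rk T_0 \leq |T_0|$) gives $\rk F \leq \rk G + |T_0|$, while $G \subset F \setminus E_i$ gives $\rk G \leq \rk(F \setminus E_i) = \rk F - |T_0|$. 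Together these force $\rk G = \rk(F \setminus E_i)$. Any element $e \in (F \setminus E_i) \setminus G$ would then satisfy $\rk(G \cup \{e\}) \leq \rk(F \setminus E_i) = \rk G$, contradicting the fact that $G$ is a flat of $\widetilde P$ not containing $e$. Hence $F \setminus E_i = G$, so $F = G \cup T_0 = S$, exhibiting $\mathbf s$ as a combinatorial flat of $(P, \mathbf n)$.
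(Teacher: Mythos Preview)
Your proof is correct, and it takes a genuinely different route from the paper's.

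The paper argues entirely at the multiset level: from the hypothesis it lifts $\pi_{E\setminus i}(\mathbf s)$ to some combinatorial flat $\mathbf s'$ of $(P,\mathbf n)$ via \cref{deletionflats}, then compares $\mathbf s$, $\overline{\mathbf s}$, and $\mathbf s'$ coordinatewise, splitting into the cases $s'_i=n_i$ and $s'_i<n_i$ and invoking \cref{combrankformula} and \cref{stupidbound}.  Your argument instead descends to the multisymmetric lift $\widetilde P$: you pick a representative $S=G\sqcup T_0$, use the dichotomy of \cref{closure} together with $\overline s_i<n_i$ to pin down $\overline S\cap E_i=T_0$, and then a clean rank sandwich (submodularity from above, \cref{multisymmetricsubtraction} from below) forces $\overline S\setminus E_i=G$.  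Your approach avoids both the auxiliary flat $\mathbf s'$ and the case split, at the cost of unpacking the lift; the paper's version stays in the language of multisets but leans on more of the developed lemmas.  One small remark: your ``first substep'' showing $\overline G=G$ in $\widetilde P$ is actually unnecessary, since the contradiction at the end only needs flatness of $G$ in $\widetilde P\setminus E_i$ (the offending element $e$ lies in $\widetilde E\setminus E_i$).
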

\begin{proof}
  \revision{The if direction follows from \cref{deletionflats} and \cref{deletioncommutes}}.
  For the ``only if'', suppose that $\pi_{E \setminus i}(\mathbf s)$ is a combinatorial flat of $(P \setminus i, \pi_{E \setminus i}(\mathbf n))$. We will show that $\mathbf s$ must be a flat by proving $\mathbf s = \overline{\mathbf s}$.
  By \cref{combrankformula} and the hypothesis that $\overline s_i < n_i$, it is always the case that $s_i = \overline s_i$.
  Hence, it suffices to show that $s_j = \overline s_j$ for $j \neq i$.

  \revision{By \cref{deletionflats} and \cref{deletioncommutes}}, there is a combinatorial flat $\mathbf s'$ of $(P, \mathbf n)$ with $\pi_{E \setminus i}(\mathbf s') = \pi_{E \setminus i}(\mathbf s)$.
  If $s'_i = n_i$, then $\mathbf s < \mathbf s'$, so $\mathbf s \leq \overline{\mathbf s} < \mathbf s'$.
  Entrywise, this implies $s_j \leq \overline s_j \leq s_j' = s_j$, which gives the desired equality.
  
  Otherwise,  $s_i' < n_i$.
  By Lemmas \ref{combrankformula} and \ref{stupidbound}, $\mathbf s'' := \mathbf s' - s_i' \mathbf e_i = \mathbf s - s_i \mathbf e_i$ is a combinatorial flat of rank at least $\rk(\mathbf s) - s_i$.
  By \cref{combrankformula}, $\overline{\mathbf s} - s_i \mathbf e_i$ is also a combinatorial flat, and has rank $\rk(\overline{\mathbf s}) - s_i$.
  Since $\mathbf s'' \leq \overline{\mathbf s} - s_i \mathbf e_i$, these two combinatorial flats must be equal.
  In particular, for $j \neq i$, $s_j'' = \overline s_j$. Since $s_j'' = s_j$ by construction, this means $s_j =\overline s_j$, as desired.
\end{proof}

\begin{lem}\label{truncation}
  Let $(P, \mathbf n)$ be a caged polymatroid on $E$.
  Let $F \subset E$ be a flat, and $\mathbf f = \sum_{i \in F} n_i \mathbf e_i$.
  The combinatorial flats of $(T_F P, \mathbf n)$ are the combinatorial flats $\mathbf s$ of $P$ that either contain $\mathbf f$ or satisfy  $\overline{\mathbf s +  \mathbf e_i} \not \geq \mathbf f$ for every $i \in E$ such that $s_i < n_i$.
  In the former case, $\rk_{T_FP}(\mathbf s) = \rk_P(\mathbf s) - 1$, and in the latter, $\rk_{T_FP}(\mathbf s) = \rk_P(\mathbf s)$.
\end{lem}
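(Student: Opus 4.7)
The plan is to lift through the multisymmetric matroid correspondence via \cref{truncationcommutes} and read both sides of the equivalence off from standard truncation behavior. Set $G := \bigcup_{i \in F} E_i \subset \widetilde E$. Since $F$ is a flat of $P$, \cref{rmk:liftclosure} shows that $G$ is a flat of $\widetilde P$; it also represents $\mathbf f$ by construction. By \cref{truncationcommutes}, $\widetilde{T_F P} = T_G \widetilde P$, so a multiset $\mathbf s$ is a combinatorial flat of $(T_F P, \mathbf n)$ precisely when some (equivalently any) representative $S \subset \widetilde E$ is a flat of $T_G \widetilde P$, in which case $\rk_{T_F P}(\mathbf s) = \rk_{T_G \widetilde P}(S)$.

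Next I would classify the flats of $T_G \widetilde P$ by splitting on whether $S \supset G$. If $S \supset G$, then $S \cup e \supset G$ for every $e$, so the truncation comparison $\rk_{T_G \widetilde P}(S \cup e) > \rk_{T_G \widetilde P}(S)$ is shifted by $-1$ on both sides: thus $S$ is a flat of $T_G \widetilde P$ iff $S$ is a flat of $\widetilde P$, and $\rk_{T_G \widetilde P}(S) = \rk_{\widetilde P}(S) - 1$. If $S \not\supset G$, then $\rk_{T_G \widetilde P}(S) = \rk_{\widetilde P}(S)$; for any $e \notin S$ with $\overline{S \cup e}_{\widetilde P} \supset G$ we get $\rk_{T_G \widetilde P}(S \cup e) \leq \rk_{\widetilde P}(S \cup e) - 1 \leq \rk_{\widetilde P}(S) = \rk_{T_G \widetilde P}(S)$, so $S$ cannot be a flat in this situation. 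Conversely, if $\overline{S \cup e}_{\widetilde P} \not\supset G$ for every $e \notin S$, the flat condition reduces to $S$ being a flat of $\widetilde P$, and $\rk_{T_G \widetilde P}(S) = \rk_{\widetilde P}(S)$.

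Finally, I would translate these conditions back to multisets. The condition $S \supset G$ is exactly $\mathbf s \geq \mathbf f$. For the second case, multisymmetry of $\widetilde P$ and \cref{closure} imply that $\overline{S \cup e}_{\widetilde P}$ depends only on the block $E_i$ containing $e$ and is represented by $\overline{\mathbf s + \mathbf e_i}$; hence $\overline{S \cup e}_{\widetilde P} \supset G$ is equivalent to $\overline{\mathbf s + \mathbf e_i} \geq \mathbf f$. The ranks then read off directly from the case analysis. I do not anticipate any serious obstacles — the proof is a mechanical translation, with the only mildly delicate step being the verification that the closure computation is independent of the choice of representative $e \in E_i \setminus S$, which is immediate from multisymmetry.
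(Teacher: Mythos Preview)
Your approach is the same as the paper's: lift via \cref{truncationcommutes} to $\widetilde{T_F P} = T_G \widetilde P$, classify the flats of the truncated matroid, and translate back to multisets. The paper simply cites \cite[Proposition~7.4.9]{B86} for the matroid-level classification of flats of a truncation, whereas you reprove that step inline; the translation step is identical.

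There is one small slip in your inline argument. The assertion ``if $S \not\supset G$, then $\rk_{T_G \widetilde P}(S) = \rk_{\widetilde P}(S)$'' is false in the sub-case $S \not\supset G$ but $\overline{S}_{\widetilde P} \supset G$: there $\rk_{T_G \widetilde P}(S) = \rk_{\widetilde P}(S) - 1$, and your displayed chain only yields $\rk_{T_G \widetilde P}(S \cup e) \leq \rk_{T_G \widetilde P}(S) + 1$, which does not exclude $S$ from being a flat. The fix is immediate: in this sub-case pick $g \in G \setminus S \subset \overline{S}_{\widetilde P}$; then $\rk_{\widetilde P}(S \cup g) = \rk_{\widetilde P}(S)$ shows $S$ is not a flat of $\widetilde P$, and the same equality (both sides shifted by $-1$ under $T_G$) shows it is not a flat of $T_G \widetilde P$ either. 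With this extra line, your case analysis is complete and the rest goes through as written.
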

\begin{proof}
  For matroids and ordinary flats, this lemma is proved in \cite[Proposition 7.4.9]{B86}. Explicitly, if $P$ is a matroid, then the flats of $T_F P$ are the flats $S$ of $P$ that either contain $F$ or satisfy $\overline{S \cup i} \not \supset F$ for every $i \in E \setminus S$.
  
  By the matroid case and \cref{truncationcommutes}, a set $S$ is a flat of $\widetilde{T_F P}$ if and only if $S$ is a flat of $\widetilde P$ that either contains $\cup_{i \in F} E_i$ or satisfies $\overline{S \cup i} \not \supset \cup_{i \in F} E_i$ for all $i \in \widetilde E \setminus S$. In the former case, $\rk_{\widetilde{T_F P}}(S) = \rk_{\widetilde P}(S) - 1$; in the latter $\rk_{\widetilde{T_FP}}(S) = \rk_{\widetilde P}(S)$.
Taking $\mathbf s$ to be the multiset represented by $S$ completes the proof.
\end{proof}

\bigskip
In the remainder of \cref{comboflats}, we work up to proving \cref{combo}, then provide axioms for combinatorial flats.
These results are not required to understand Schubert varieties of subspace arrangements; readers interested solely in geometry may proceed to  \cref{sec:polymatroidschubert} forthwith.

\subsection{}\label{posetprops}
We prove that $\mathcal L_{P, \mathbf n}$ is a top-heavy semimodular lattice, and identify its join-irreducibles in a special case.
For multisets $\mathbf s, \mathbf s' \leq \mathbf n$, let  $\mathbf s \vee \mathbf s' := \overline{(\max(s_i, s_i'))_i}$ and $\mathbf s \wedge \mathbf s' := (\min(s_i, s_i'))_i$.

\begin{lem}\label{submodularity}
  If $\mathbf s$ and $\mathbf s'$ are multisets contained in $\mathbf n$, then
  \[
    \rk_{P}(\mathbf s \wedge \mathbf s') + \rk_{P}(\mathbf s \vee \mathbf s') \leq \rk_{P}(\mathbf s) + \rk_{P}(\mathbf s').
  \]
\end{lem}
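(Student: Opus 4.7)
The plan is to lift the inequality to the multisymmetric matroid $\widetilde{P}$ and invoke matroid submodularity. The only subtlety is choosing representatives in $\widetilde{E}$ so that set-theoretic intersection and union correspond to the operations $\wedge$ and (the unclosed version of) $\vee$ on multisets.

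First I would choose subsets $S, S' \subset \widetilde{E}$ representing $\mathbf{s}$ and $\mathbf{s}'$ respectively, with the additional requirement that for every $1 \leq i \leq N$, either $S \cap E_i \subset S' \cap E_i$ or $S' \cap E_i \subset S \cap E_i$. This is possible because we are free to act by $\mathfrak{S} = \prod_i \mathfrak{S}_{E_i}$ on each representative, and for each $i$ the two subsets of $E_i$ have cardinalities $s_i$ and $s_i'$, one of which is at most the other. With this choice,
\[
|(S \cap S') \cap E_i| = \min(s_i, s_i'), \qquad |(S \cup S') \cap E_i| = \max(s_i, s_i'),
\]
so $S \cap S'$ represents $\mathbf{s} \wedge \mathbf{s}'$ and $S \cup S'$ represents the multiset $(\max(s_i, s_i'))_i$ whose closure is $\mathbf{s} \vee \mathbf{s}'$.

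Next I would apply submodularity of the matroid rank function $\rk_{\widetilde{P}}$ to $S$ and $S'$ to get
\[
\rk_{\widetilde{P}}(S \cap S') + \rk_{\widetilde{P}}(S \cup S') \leq \rk_{\widetilde{P}}(S) + \rk_{\widetilde{P}}(S').
\]
By the definition of multiset rank via a representative (see \cref{comboflatsminimum}, together with \cref{indeprank} and \cref{equivalence} identifying the two definitions), the right-hand side equals $\rk_P(\mathbf{s}) + \rk_P(\mathbf{s}')$, and the first term on the left equals $\rk_P(\mathbf{s} \wedge \mathbf{s}')$. For the remaining term, rank is invariant under closure, so
\[
\rk_{\widetilde{P}}(S \cup S') = \rk_P\bigl((\max(s_i, s_i'))_i\bigr) = \rk_P\bigl(\overline{(\max(s_i, s_i'))_i}\bigr) = \rk_P(\mathbf{s} \vee \mathbf{s}').
\]
Assembling the three identifications yields the claimed inequality.

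The only place where one might stumble is the opening combinatorial choice of $S$ and $S'$; everything after that is a direct transcription of matroid submodularity. I do not foresee any other obstacle.
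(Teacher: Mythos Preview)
Your proof is correct and follows exactly the paper's approach: choose representatives $S,S'\subset\widetilde E$ so that $S\cap S'$ and $S\cup S'$ represent $\mathbf s\wedge\mathbf s'$ and $(\max(s_i,s_i'))_i$, then apply submodularity of $\rk_{\widetilde P}$ and the invariance of rank under closure. Your explicit nesting condition on each $E_i$ is a welcome clarification of why such representatives exist, which the paper leaves implicit.
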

\begin{proof}
  Pick sets $S, S' \subset \widetilde E$ representing $\mathbf s$ and $\mathbf s'$, respectively, such that $S \cap S'$ represents $\mathbf s \wedge \mathbf s'$ and $S \cup S'$ represents $(\max(s_i, s_i'))_i$.
  By submodularity of $\rk_{\widetilde P}$,
  \begin{align*}
    \rk_{P}(\mathbf s \wedge \mathbf s') + \rk_{P}(\mathbf s \vee \mathbf s')
    &= \rk_{\widetilde P}(S \cap S') + \rk_{\widetilde P}(\overline{S \cup S'})
    = \rk_{\widetilde P}(S \cap S') + \rk_{\widetilde P}(S \cup S')\\
    &\leq \rk_{\widetilde P}(S) + \rk_{\widetilde P}(S')
    = \rk_{P}(\mathbf s) + \rk_{P}(\mathbf s'). \qedhere
  \end{align*}
\end{proof}

\begin{lem}\label{lattice}
  $\mathcal L_{P, \mathbf n}$ is a semimodular lattice, with join and meet given by $\vee$ and $\wedge$, respectively. Moreover, $\mathcal L_{P, \mathbf n}$ is top heavy and $|\mathcal L_{P, \mathbf{n}}^k| \leq | \mathcal L_{P, \mathbf n}^{k+1}|$ when $k \leq d/2$.
\end{lem}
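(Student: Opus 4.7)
The plan is to verify directly that $\vee$ and $\wedge$ give joins and meets in $\mathcal{L}_{P,\mathbf n}$, to deduce semimodularity from gradedness plus the submodularity of \cref{submodularity}, and finally to extract the two extremal inequalities by transferring the corresponding matroid results of \cite{BHMPW20b} applied to $\widetilde P$ across the isomorphism $\mathcal{L}_{P,\mathbf n}\cong\mathcal{L}_{\widetilde P}/\mathfrak S$ from \cref{comboflatsminimum}.

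For the lattice structure, start with the meet. Transitivity of the $\mathfrak S$-action on size-$k$ subsets of each $E_i$ lets me select representatives $S,S'\subset\widetilde E$ of $\mathbf s,\mathbf s'$ with $|S\cap S'\cap E_i|=\min(s_i,s_i')$ for every $i$. Since intersections of matroid flats are flats, $S\cap S'$ is a flat of $\widetilde P$ representing $\mathbf s\wedge\mathbf s'$, so $\mathbf s\wedge\mathbf s'\in\mathcal{L}_{P,\mathbf n}$; that it is the greatest lower bound is immediate entrywise. The join $\mathbf s\vee\mathbf s'$ lies in $\mathcal{L}_{P,\mathbf n}$ by definition, and any common upper bound $\mathbf u\in\mathcal{L}_{P,\mathbf n}$ satisfies $u_i\geq\max(s_i,s_i')$ entrywise, hence $\mathbf u\geq\overline{(\max(s_i,s_i'))_i}=\mathbf s\vee\mathbf s'$ since $\mathbf u$ is itself closed. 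Semimodularity is then immediate: $\mathcal{L}_{P,\mathbf n}$ is graded with rank function $\rk_P$, this rank function is submodular with respect to $\vee,\wedge$ by \cref{submodularity}, and any graded lattice with a submodular rank function is upper semimodular.

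For the two extremal inequalities, apply the main theorems of \cite{BHMPW20b} to the matroid $\widetilde P$ to obtain, for each $k\leq k'\leq d-k$, an injection $\iota\colon\mathcal{L}_{\widetilde P}^k\hookrightarrow\mathcal{L}_{\widetilde P}^{k'}$. This injection arises from a hard Lefschetz map on an $\Aut(\widetilde P)$-equivariant Kazhdan--Lusztig module (the Lefschetz element may be averaged to be $\Aut(\widetilde P)$-invariant), so it may be taken $\mathfrak S$-equivariant since $\mathfrak S\subset\Aut(\widetilde P)$. Any $\mathfrak S$-equivariant injection of $\mathfrak S$-sets descends to an injection of orbit sets, yielding
\[
\mathcal{L}_{P,\mathbf n}^k\;=\;\mathcal{L}_{\widetilde P}^k/\mathfrak S\;\hookrightarrow\;\mathcal{L}_{\widetilde P}^{k'}/\mathfrak S\;=\;\mathcal{L}_{P,\mathbf n}^{k'},
\]
which gives the Sperner-type inequality for $k'=k+1$ and top-heaviness for $k'=d-k$. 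The main subtlety is equivariance of $\iota$ at the level of flats, which requires carefully invoking the equivariant formulation of the hard Lefschetz theorem underlying \cite{BHMPW20b}, not merely its cardinality consequence.
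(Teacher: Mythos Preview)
Your treatment of the lattice structure and semimodularity is fine and matches the paper's argument essentially verbatim.

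The gap is in your top-heaviness argument. You assert that \cite{BHMPW20b} furnishes an $\mathfrak S$-equivariant injection of \emph{sets} $\iota\colon\mathcal L_{\widetilde P}^k\hookrightarrow\mathcal L_{\widetilde P}^{k'}$, justified by the fact that the hard Lefschetz map can be averaged to be $\Aut(\widetilde P)$-invariant. But the hard Lefschetz map is a \emph{linear} map $L\colon H^k(\widetilde P)\to H^{k'}(\widetilde P)$; the set-theoretic injection of flats one extracts from it (via a Hall-type matching argument on the support of the matrix) is noncanonical, and equivariance of $L$ does not transfer to equivariance of any such matching. Indeed, an equivariant linear embedding $\Q^X\hookrightarrow\Q^Y$ of permutation modules need not come from an equivariant injection $X\hookrightarrow Y$: take $G=\Z/2$, $X$ a point, and $Y$ a free orbit of size two. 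So the sentence ``it may be taken $\mathfrak S$-equivariant'' is the step that fails, and you flag exactly this as ``the main subtlety'' without resolving it.

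The paper sidesteps this entirely by never passing to a set-theoretic injection. It works with the equivariant linear injection $L$ directly and restricts it to $\mathfrak S$-invariants: since $H^k(\widetilde P)$ is a permutation representation over $\Q$, its invariant subspace is spanned by orbit sums and has dimension $|\mathcal L_{\widetilde P}^k/\mathfrak S|=|\mathcal L_{P,\mathbf n}^k|$. Injectivity of $L$ on invariants then gives the inequalities immediately. This is the clean fix for your argument: replace ``equivariant injection of $\mathfrak S$-sets descends to orbit sets'' with ``equivariant injection of $\Q[\mathfrak S]$-modules restricts to invariants, and $\dim(\Q^X)^{\mathfrak S}=|X/\mathfrak S|$.''
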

\begin{proof}
  Let $\mathbf s, \mathbf s' \in \mathcal L_{P, \mathbf n}$.
  The join of $\mathbf s$ and $\mathbf s'$ is equal to $\mathbf s \vee \mathbf s'$ because any combinatorial flat containing $\mathbf s$ and $\mathbf s'$ must contain $(\max(s_i, s_i'))_i$, hence must also contain $\mathbf s \vee \mathbf s'$.

  For meets: any combinatorial flat contained in both $\mathbf s$ and $\mathbf s'$ is contained in $\mathbf s \wedge \mathbf s'$, so we need only show that $\mathbf s \wedge \mathbf s'$ is a combinatorial flat.
  To see this, pick sets $S, S' \subset \widetilde E$ representing $\mathbf s$ and $\mathbf s'$, and such that $S \cap S'$ represents $\mathbf s \wedge \mathbf s'$. Since $\mathbf s$ and $\mathbf s'$ are combinatorial flats, $S$ and $S'$ are flats of $\widetilde P$.
  Since flats are closed under intersection, $\mathbf s \wedge \mathbf s'$ is also a combinatorial flat.
  Hence, $\mathbf s \wedge \mathbf s'$ is the meet of $\mathbf s$ and $\mathbf s'$ in $\mathcal L_{P, \mathbf n}$.

  Semimodularity is \cref{submodularity}.

  For top-heaviness:
  let $H^k(\widetilde P)$ be the $\Q$-vector space basis $\{v_F : F \in \mathcal L_{\widetilde P}^k\}$, and let
  $H^k(P, \mathbf n)$ be the $\Q$-vector space with basis $\{v_{\mathbf s} : \mathbf s \in \mathcal L_{P, \mathbf n}^k\}$.
  From the quotient construction of $\mathcal L_{P, \mathbf n}$, there are isomorphisms
  \[
    H^k(P, \mathbf n) \overset{\cong}{\to} H^k(\widetilde P)^{\mathfrak S} \subset H^k(\widetilde P), \quad v_{\mathbf s} \mapsto \sum_{\sigma \in \mathfrak S} v_{\sigma \cdot S},
  \]
  where $S$ is any flat representing $\mathbf s$.
  By \cite[Theorem 1.1]{BHMPW20b}, there is an injective linear $L: H^k(\widetilde P) \to H^{d-k}(\widetilde P)$ that commutes with the action of $\mathfrak S$. Consequently, the restriction of $L$ to $H^k(\widetilde P)^{\mathfrak S}$ gives a linear injection
  \[
    H^k(P, \mathbf n) \cong H^k(\widetilde P)^{\mathfrak S} \lhook\joinrel\xrightarrow{\;L\;} H^{d-k}(\widetilde P)^{\mathfrak S} \cong H^{d-k}(P, \mathbf n),
  \]
  and taking dimensions yields top-heaviness.
  Also by \cite[Theorem 1.1]{BHMPW20b}, there are $\mathfrak S$-equivariant linear injections $H^0(\widetilde P) \hookrightarrow H^{1}(\widetilde P) \hookrightarrow \ldots \hookrightarrow H^{\lfloor d/2 \rfloor}(\widetilde P)$, which restrict to linear injections
  \[
    H^k(P, \mathbf n) \cong H^k(\widetilde P)^{\mathfrak S} \lhook\joinrel\xrightarrow{\;L\;} H^{k+1}(\widetilde P)^{\mathfrak S} \cong H^{k+1}(P, \mathbf n),
  \]
  which proves the remaining statement.
\end{proof}
Generalizations of Lemmas \ref{submodularity} and \ref{lattice} for matricubes appear in \cite[Theorem 3.4]{AG24} (see \cref{matricube}).
\revision{With Lemmas \ref{submodularity} and \ref{lattice} in hand, we have established \cref{combo} parts (i), (iii), and (iv). We now devote our attention to \cref{combo}(ii), which states that $P$ is determined by $\mathcal L_P$ up to simplification. }

\revision{An element of a lattice is \word{join-irreducible} if it cannot be written as a join of strictly smaller elements.}
\begin{lem}\label{joinirreds}
  If $P$ is simple and $\mathbf n = (\rk_P(1), \ldots, \rk_P(N))$, then the join-irreducibles of $\mathcal L_{P, \mathbf n}$ are all multisets $s_i \mathbf e_i$, with $i \in E$ and $1 \leq s_i \leq n_i$.
\end{lem}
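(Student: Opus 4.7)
The plan is to prove the lemma in two directions, using the fact that in a graded lattice an element is join-irreducible precisely when it covers a unique element.

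For the forward direction, I would first verify each $s_i \mathbf e_i$ (with $1 \leq s_i \leq n_i$) is a combinatorial flat by showing any $S \subset E_i$ with $|S| = s_i$ is a flat of $\widetilde P$. The rank formula for the lift gives $\rk_{\widetilde P}(S) = s_i$ because simplicity of $P$ forces $\rk_P(\{i, k\}) > n_i \geq s_i$ for every $k \neq i$, and the same estimate shows $\rk_{\widetilde P}(S \cup \{e\}) \geq s_i + 1$ for every $e \notin S$. Once this is established, the combinatorial flats contained in $s_i \mathbf e_i$ are exactly the multisets $s \mathbf e_i$ with $0 \leq s \leq s_i$, which form a chain in $\mathcal L_{P, \mathbf n}$, so $s_i \mathbf e_i$ covers only $(s_i - 1) \mathbf e_i$ and is join-irreducible.

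For the converse, I would take a combinatorial flat $\mathbf s \neq \mathbf 0$ whose support $F := \supp(\mathbf s)$ has $|F| \geq 2$, and exhibit a decomposition $\mathbf s = \mathbf s_1 \vee \mathbf s_2$ with both pieces strictly below $\mathbf s$, splitting on whether $\mathbf s$ has any partial coordinate $0 < s_i < n_i$. If such an $i$ exists, set $\mathbf s_1 = s_i \mathbf e_i$ and $\mathbf s_2 = \overline{\mathbf s - s_i \mathbf e_i}$; the coordinate-wise max of $\mathbf s_1$ and $\mathbf s_2$ dominates $\mathbf s$, so $\mathbf s_1 \vee \mathbf s_2 = \mathbf s$. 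The strict inequality $\mathbf s_1 < \mathbf s$ is immediate from $|F| \geq 2$, and for $\mathbf s_2 < \mathbf s$ I would fix a representative $S$ of $\mathbf s$, note that $\overline{S \setminus S_i} \subset S$ as flats of $\widetilde P$, and apply \cref{combrankformula} to conclude $(\mathbf s_2)_i \in \{0, n_i\}$; the value $n_i$ is ruled out since it would force $\overline{S \setminus S_i} \supset E_i$, contradicting $|S \cap E_i| = s_i < n_i$.

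In the remaining case every $i \in F$ satisfies $s_i = n_i$, so $\mathbf s = \sum_{j \in F} n_j \mathbf e_j$ with $F$ a flat of $P$. Here the plan is to pick $G \subset F$ maximal among subsets whose $P$-closure $\overline{G}$ is strictly smaller than $F$; simplicity guarantees each singleton $\{i'\} \subsetneq F$ is a flat of $P$, so $G$ is non-empty. Choosing $i \in F \setminus \overline{G}$, the maximality of $G$ forces $\overline{G \cup \{i\}} = F$. Setting $\mathbf s_1 = \sum_{j \in \overline{G}} n_j \mathbf e_j$ and $\mathbf s_2 = n_i \mathbf e_i$, both are combinatorial flats strictly below $\mathbf s$, and their join computes to $\sum_{j \in \overline{G \cup \{i\}}} n_j \mathbf e_j = \mathbf s$. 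The main obstacle throughout is arranging that the ``residual'' piece stays strictly below $\mathbf s$ after closure; this is controlled by $s_i < n_i$ in the first case and by the maximality of $G$ in the second, the latter being the harder step since it rules out the pathological scenario where $F$ is a cyclic flat and every naive splitting collapses back to $F$.
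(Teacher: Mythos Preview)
Your argument is correct, and the forward direction matches the paper's approach (showing the interval $[\mathbf 0, s_i\mathbf e_i]$ is a chain). However, your converse direction is considerably more elaborate than necessary. The paper dispatches it in one line: for any combinatorial flat $\mathbf s$, the identity
\[
  \mathbf s = s_1\mathbf e_1 \vee s_2\mathbf e_2 \vee \cdots \vee s_N\mathbf e_N
\]
holds, because the componentwise maximum of the $s_j\mathbf e_j$ is $\mathbf s$ itself, and $\mathbf s$ is already closed. If $\mathbf s$ is not one of the $s_i\mathbf e_i$, then at least two of these summands are nonzero and hence strictly below $\mathbf s$, so $\mathbf s$ is not join-irreducible.

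Your two-case split, the careful tracking of $(\mathbf s_2)_i$ via \cref{combrankformula}, and especially the ``maximal $G$'' maneuver in the geometric case are all avoidable. The pathological scenario you worry about---where a naive splitting ``collapses back'' to $F$ after closure---cannot occur here: you are taking the closure of $\mathbf s$, not of a proper sub-multiset, so nothing can overshoot. What your longer route buys is an explicit two-term decomposition and a bit of extra insight into how closure interacts with coordinate deletion, but for the lemma as stated this is not needed.
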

\begin{proof}
  Since $P$ is simple, $E_i$ is a flat of $\widetilde P$ for all $i \in E$.
  The restriction of $\widetilde P$ to $E_i$ is a boolean matroid of rank $n_i = \rk_P(i)$, so all proposed join-irreducibles are combinatorial flats of $(P, \mathbf n)$.
  Each $s_i \mathbf e_i$ is join-irreducible because the interval $[\mathbf 0, s_i \mathbf e_i] \subset \mathcal L_{P, \mathbf n}$ is a chain of length $s_i$.
  No other combinatorial flats are join-irreducible because $\mathbf s \in \mathcal L_{P, \mathbf n}$ can be written $\mathbf s = s_1 \mathbf e_1 \vee \cdots \vee s_N \mathbf e_N$.
\end{proof}
\begin{cor}\label{determined}
  If $P$ is simple and $\mathbf n = (\rk_P(1), \ldots, \rk_P(N))$, then $P$ is determined up to relabelling of the ground set by the poset $\mathcal L_{P, \mathbf n}$.
\end{cor}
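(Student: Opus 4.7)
The plan is to reconstruct the rank function $\rk_P$ directly from the poset structure of $\mathcal L_{P,\mathbf n}$; since $P$ is by definition the data of its rank function on $E$, this will determine $P$ up to relabelling of the ground set. The key input is \cref{joinirreds}, which asserts that the join-irreducibles of $\mathcal L_{P,\mathbf n}$ are precisely the multisets $s_i\mathbf e_i$ with $i\in E$ and $1\le s_i\le n_i$. Because $s_i\mathbf e_i$ and $s_j\mathbf e_j$ are comparable if and only if $i=j$, these join-irreducibles partition order-theoretically into $N$ disjoint chains, one above each atom.

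First I would recover $E$, up to relabelling, as the set of atoms of $\mathcal L_{P,\mathbf n}$; by \cref{joinirreds} these are $\mathbf e_1,\ldots,\mathbf e_N$. For each atom $\mathbf e_i$, let $x_i$ denote the maximum of the chain of join-irreducibles lying above $\mathbf e_i$; this is an order-theoretic construction, and by \cref{joinirreds} we have $x_i = n_i\mathbf e_i$, so its rank in $\mathcal L_{P,\mathbf n}$ reconstructs $n_i = \rk_P(i)$. Then, for an arbitrary subset $A\subset E$, I would define $\rk_P(A)$ to be the rank of $\bigvee_{i\in A} x_i$ in $\mathcal L_{P,\mathbf n}$. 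The reason this works is that the multiset $\sum_{i\in A} n_i\mathbf e_i$ is represented in $\widetilde E$ by the geometric set $\bigcup_{i\in A} E_i$, whose closure in $\widetilde P$ is $\bigcup_{i\in\overline A} E_i$ by \cref{rmk:liftclosure}; thus $\bigvee_{i\in A} x_i = \sum_{i\in\overline A} n_i\mathbf e_i$ has rank $\rk_{\widetilde P}\bigl(\bigcup_{i\in\overline A} E_i\bigr) = \rk_P(\overline A) = \rk_P(A)$.

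The point requiring care, and the step most likely to go wrong if attempted naively, is the choice of $x_i$: one must use the tops $n_i\mathbf e_i$ rather than the atoms $\mathbf e_i$ themselves. The join of atoms $\bigvee_{i\in A}\mathbf e_i$ is represented by some transversal $\{e_i : i\in A\}$ of the $E_i$'s, and its $\widetilde P$-closure is in general strictly smaller than $\bigcup_{i\in\overline A} E_i$, so its rank underestimates $\rk_P(A)$. Climbing to the tops of the join-irreducible chains forces the underlying subset of $\widetilde E$ to be a full union of $E_i$'s, which is precisely what makes \cref{rmk:liftclosure} applicable and converts closure in $\widetilde P$ back into closure in $P$.
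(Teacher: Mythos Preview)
Your proof is correct and follows essentially the same route as the paper's: both identify the elements of $E$ with the maximal join-irreducibles $x_i = n_i\mathbf e_i$ of $\mathcal L_{P,\mathbf n}$ via \cref{joinirreds} and recover the rank function as $\rk_P(A) = \rk_{P,\mathbf n}\bigl(\bigvee_{i\in A} x_i\bigr)$. The paper's argument is a two-line sketch, so your explicit verification using \cref{rmk:liftclosure} and the cautionary remark about atoms versus tops of chains are welcome elaborations rather than a different method.
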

\begin{proof}
  By \cref{joinirreds}, each $i \in E$ corresponds to a maximal join-irreducible $x_i$ of $\mathcal L_{P, \mathbf n}$.
  The rank function of $P$ is recovered by $\rk_P(S) = \rk_{P, \mathbf n}(\vee_{i \in S} x_i)$.
\end{proof}

\subsection{} \label{comboflatsimplification}
We show that reduction and loops do not change the lattice of combinatorial flats.
From these facts, we derive \cref{combo}.

\begin{lem}\label{reductioniso}
  Suppose $P$ is loopless.
If  $\{i\} \subset E$ is not a flat of $P$ with rank $n_i$, then $\mathcal L_{R_i(P, \mathbf n)} \cong \mathcal L_{P, \mathbf n}$.
\end{lem}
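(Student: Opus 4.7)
The plan is to construct an explicit order-isomorphism $\phi\colon \mathcal L_{P, \mathbf n} \to \mathcal L_{R_i(P, \mathbf n)}$ at the multiset level, defined by
\[
    \phi(\mathbf s) = \begin{cases} \mathbf s & \text{if $s_i < n_i$,} \\ \mathbf s - \mathbf e_i & \text{if $s_i = n_i$.} \end{cases}
\]
By \cref{reductionlift}, the multisymmetric lift $\widetilde{R_i(P, \mathbf n)}$ is isomorphic to $\widetilde P \setminus j$ for any $j \in E_i$, so both lattices are orbit posets of flat lattices under products of symmetric groups, and the deletion map $F \mapsto F \setminus j$ descends to $\phi$. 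Well-definedness of $\phi$ then follows from \cref{deletionflats} by using $\mathfrak S_{E_i}$-symmetry to choose a representative flat $F$ of $\mathbf s$ with $j \notin F$ when $s_i < n_i$, and with $F \supset E_i$ (so $j \in F$) when $s_i = n_i$.

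The heart of the argument is the following lemma: \emph{under the hypotheses, if $F$ is a flat of $\widetilde P$ with $F \supset E_i$ and $j \in E_i$, then $F \setminus j$ is not a flat of $\widetilde P$.} I would prove this by cases matching the definition of $R_i(P, \mathbf n)$. When $n_i > \rk_P(i)$, the restriction $\widetilde P|_{E_i}$ is the uniform matroid $U_{\rk_P(i), n_i}$, so $j \in \overline{E_i \setminus j} \subset \overline{F \setminus j}$. When $n_i = \rk_P(i)$ and $\{i\}$ is not a flat of $P$, the set $E_i$ is independent, and by \cref{rmk:liftclosure} there exists $k^* \neq i$ with $E_{k^*} \subset \overline{E_i}$. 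Multisymmetry under $\mathfrak S_{E_i}$ (which fixes $E_{k^*}$ pointwise) forces the fundamental circuit of each $e \in E_{k^*}$ relative to the basis $E_i$ to be $\mathfrak S_{E_i}$-invariant, hence equal to $\{e\} \cup E_i$, since $\widetilde P$ is loopless. Consequently $E_{k^*} \cap \overline{E_i \setminus j} = \emptyset$, giving $\rk(E_i \setminus j \cup E_{k^*}) = n_i$ and thus $j \in \overline{E_i \setminus j \cup E_{k^*}} \subset \overline{F \setminus j}$.

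With the lemma in hand, the remaining steps are routine. Injectivity of $\phi$ is immediate: any representative of $\mathbf s - \mathbf e_i$ would take the form $F \setminus j$ for some flat $F \supset E_i$, contradicting the lemma. Surjectivity follows from \cref{deletionflats} together with a transposition argument: if a flat $F'$ of $\widetilde P \setminus j$ has $|F' \cap E_i| < n_i - 1$, then swapping $j$ with an element $j^* \in E_i \setminus (F' \cup j)$ forces $F'$ itself to be a flat of $\widetilde P$. For the order isomorphism, the only possibly problematic case is $\phi(\mathbf s) \leq \phi(\mathbf t)$ with $s_i = n_i$ and $t_i = n_i - 1$; here the representative $G$ of $\mathbf t$ must contain $\overline{E_i} \setminus j_0$ for some $j_0$, and repeating the lemma's argument gives $\overline{\overline{E_i} \setminus j_0} = \overline{E_i}$, so $G \supset E_i$, contradicting $|G \cap E_i| = n_i - 1$. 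The main obstacle is the second case of the lemma, where one must extract from ``$\{i\}$ is not a flat'' the structural fact, via multisymmetry and looplessness, that elements of $\overline{E_i} \setminus E_i$ must depend on the entire basis $E_i$.
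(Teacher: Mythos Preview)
Your proof is correct and follows the same overall strategy as the paper: define the explicit map $\phi$ (the paper calls it $\psi$), use \cref{reductionlift} and \cref{deletionflats} for well-definedness, then establish bijectivity by case analysis on whether $n_i > \rk_P(i)$ or $n_i = \rk_P(i)$ with $\{i\}$ not a flat.

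The implementation differs in two places. For surjectivity in the case $|F' \cap E_i| < n_i - 1$, you use a transposition $(j\,j^*)$ to show $F'$ is already a flat of $\widetilde P$; the paper instead appeals to the third bullet of \cref{combrankformula}. For injectivity in the case $n_i = \rk_P(i)$, you argue structurally via fundamental circuits---multisymmetry forces the fundamental circuit of any $e \in E_{k^*}$ over the basis $E_i$ to be $\{e\} \cup E_i$---whereas the paper computes $\rk_{\widetilde P}((E_i \setminus e) \cup e')$ directly from the defining formula in \cref{lift}. Both routes yield the same conclusion $\overline{(E_i \setminus e) \cup e'} = \overline{E_i}$. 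Your fundamental-circuit argument is slightly more conceptual; the paper's computation is more self-contained.

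You also explicitly handle the order-isomorphism step (the case $\phi(\mathbf s) \leq \phi(\mathbf t)$ with $s_i = n_i$, $t_i = n_i - 1$), which the paper leaves implicit. Your argument there is correct: since any flat $F$ representing $\mathbf s$ contains $\overline{E_i}$, the inequality $\mathbf s - \mathbf e_i \leq \mathbf t$ forces $t_k = n_k$ for all $k \in \overline{\{i\}} \setminus \{i\}$, so a flat $G$ representing $\mathbf t$ contains $(E_i \setminus j_0) \cup E_{k^*}$ (or just $E_i \setminus j_0$ when $n_i > \rk_P(i)$), and the closure argument gives the contradiction $G \supset E_i$.
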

\begin{proof}
  We will show the map $\psi: \mathcal L_{P, \mathbf n} \to \mathcal L_{R_i(P, \mathbf n)}$ defined by
  \[ \psi(\mathbf s) = \begin{cases}
    \mathbf s, & s_i < n_i \\
    \mathbf s - \mathbf e_i, & s_i = n_i
  \end{cases}
\]
is an isomorphism.
Multisets in the image of $\psi$ are combinatorial flats of $R_i(P, \mathbf n)$ by Lemmas \ref{reductionlift} and \ref{deletionflats}.

For surjectivity, suppose that $\mathbf s \in \mathcal L_{R_i(P, \mathbf n)}$ is represented by a flat $S$ of $\widetilde{R_i(P, \mathbf n)}$.
We will show there is $\mathbf s' \in \mathcal L_{P, \mathbf n}$ with $\psi(\mathbf s') = \mathbf s$.
By Lemmas \ref{reductionlift} and \ref{deletionflats},  there is $e \in E_i \setminus S$ such that $S \cup e$ or $S$ is a flat of $\widetilde P$.
If $S$ is a flat of $\widetilde P$, then $s_i < n_i$, so we may take $\mathbf s' = \mathbf s$.
Otherwise, suppose $S \cup e$ is a flat of $\widetilde P$.
If $s_i + 1 < n_i$, then by \cref{combrankformula}, $\mathbf s$ is a combinatorial flat of $\widetilde P$, so we may again take $\mathbf s' = \mathbf s$.
Otherwise, if $S \cup e$ is a flat of $\widetilde P$ and $s_i + 1 = n_i$, then take $\mathbf s' = \mathbf s + \mathbf e_i$.
This completes the proof of surjectivity.

We now prove injectivity.
Let $\mathbf s \in \mathcal L_{R_i(P, \mathbf n)}$.
It is only possible for $\mathbf s$ to have more than one preimage if $s_i = n_i - 1$.
In this case, the two possible preimages of $\mathbf s$ are $\mathbf s$ and $\mathbf s + \mathbf e_i$.

If $\rk_P(i) < n_i$, then $\mathbf s$ cannot be a combinatorial flat of $(P, \mathbf n)$.
This is because, assuming the contrary, we obtain the false inequality
\begin{align*}
  \rk(\mathbf s) < \rk(\mathbf s + \mathbf e_i)
  &= \rk((\mathbf s - (n_i-1) \mathbf e_i) \vee n_i \mathbf e_i) \\
  &\leq \rk(\mathbf s - (n_i-1) \mathbf e_i) + \rk(n_i \mathbf e_i) \quad \text{by \cref{submodularity}}\\
  &= \rk(\mathbf s - (n_i-1) \mathbf e_i) + \rk_P(i) \\
  &\leq \rk(\mathbf s - (n_i-1) \mathbf e_i) + n_i - 1 \\
  & = \rk(\mathbf s) \quad \text{by \cref{combrankformula} and the contrarian assumption.}
\end{align*}
Since $\mathbf s$ is not a combinatorial flat of $(P, \mathbf n)$, $\mathbf s$ has only one preimage when $\rk_P(i) < n_i$.

Otherwise, we are in the case where $\rk_P(i) = n_i$ and $\{i\}$ is not a flat of $P$.
Choose sets $S \subset S' \subset \widetilde E$ representing $\mathbf s$ and $\mathbf s + \mathbf e_i$, respectively, and
assume towards a contradiction that $S$ and $S'$ are both flats of $\widetilde P$.
Let $e'$ be an element of $\overline{E_i} \setminus E_i$, which is nonempty because $\{i\}$ is not a flat of $P$.
If $e$ is the unique element of $S' \setminus S$, then $S$ contains $\overline{(E_i \setminus e) \cup e'}$.

We now show that $\overline{(E_i \setminus e) \cup e'} = \overline{E_i}$, which implies that $S = S'$, a contradiction.
By the definition of $\widetilde P$, there is $A \subset E$ such that
\[
  \rk_{\widetilde P}((E_i \setminus e) \cup e') = \rk_P(A) + |((E_i \setminus e) \cup e') \setminus \cup_{k \in A} E_k|.
\]
If $i \in A$, then $\rk_{\widetilde P}((E_i \setminus e) \cup e') \geq \rk_P(i)$, so $\overline{(E_i \setminus e) \cup e'} = \overline{E_i}$.
Otherwise, if $i \not \in A$, then
\[ \rk_{\widetilde P}((E_i \setminus e) \cup e') = \rk_P(A) + |E_i \setminus e| + |\{e'\} \setminus \cup_{k \in A} E_k| = \rk_P(A) + \rk_P(i) - 1 + |\{e'\} \setminus \cup_{k \in A} E_k|. \]
This quantity is always at least $\rk_P(i)$ because $P$ is loopless; therefore, $\overline{(E_i\setminus e) \cup e'} = \overline{E_i}$.
\end{proof}
\begin{lem}\label{deloop}
  If $i$ is a loop of $P$, then $ \mathcal L_{P, \mathbf n} \cong \mathcal L_{P\setminus i, \pi_{E \setminus i}(\mathbf n)}$.
\end{lem}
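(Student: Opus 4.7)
The plan is to show that the projection $\pi_{E\setminus i}$ induces the desired isomorphism. The idea is that, since $i$ is a loop of $P$, every combinatorial flat of $(P, \mathbf n)$ must be ``full'' in its $i$-th coordinate (i.e.\ equal to $n_i$), so discarding that coordinate loses no information.

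To execute this, I would first argue that every element of $E_i$ is a loop of $\widetilde P$. Indeed, the definition of the multisymmetric lift (\cref{lift}) applied with the set $B = \{i\}$ gives
\[
    \rk_{\widetilde P}(E_i) \,\leq\, \rk_P(\{i\}) + |E_i \setminus E_i| \,=\, 0,
\]
using that $\rk_P(\{i\}) = 0$ because $i$ is a loop. Since every flat of a matroid contains all of its loops, every flat $F$ of $\widetilde P$ contains $E_i$; equivalently, every combinatorial flat $\mathbf s \in \mathcal L_{P, \mathbf n}$ satisfies $s_i = n_i$.

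Next, I would define $\psi\colon \mathcal L_{P,\mathbf n} \to \mathcal L_{P\setminus i,\, \pi_{E \setminus i}(\mathbf n)}$ by $\psi(\mathbf s) = \pi_{E \setminus i}(\mathbf s)$ and verify it is a well-defined bijection. By \cref{deletioncommutes}, $\widetilde{P \setminus i} = \widetilde P \setminus E_i$, and by \cref{deletionflats} the flats of $\widetilde P \setminus E_i$ are precisely the sets $F \setminus E_i$ with $F$ a flat of $\widetilde P$. Since each such $F$ already contains $E_i$, the map $F \mapsto F \setminus E_i$ is a bijection between $\mathcal L_{\widetilde P}$ and $\mathcal L_{\widetilde P \setminus E_i}$. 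Quotienting by the symmetric group actions (the factor $\mathfrak S_{E_i}$ acts trivially on every flat of $\widetilde P$, since $E_i$ is contained in each) yields that $\psi$ is a bijection.

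Finally, since the order on combinatorial flats is componentwise and both source and target flats agree in the $i$-th coordinate, $\psi$ and $\psi^{-1}$ are both order-preserving, completing the proof. No step here is serious; the only thing to be careful about is ensuring that the coordinate $s_i$ is forced to be $n_i$ for \emph{every} combinatorial flat, which is exactly what the loop condition provides via the lift.
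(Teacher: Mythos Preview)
Your argument is correct and is essentially the same as the paper's: both observe that the elements of $E_i$ are loops of $\widetilde P$, so every flat of $\widetilde P$ contains $E_i$ and the bijection $F \mapsto F \setminus E_i$ (together with \cref{deletioncommutes} and \cref{deletionflats}) descends to the desired isomorphism after quotienting by $\mathfrak S$. The paper is simply terser, recording this as the chain $\mathcal L_{P,\mathbf n} \cong \mathcal L_{\widetilde P}/\mathfrak S \cong \mathcal L_{\widetilde{P \setminus i}}/\pi_{E \setminus i}(\mathfrak S) \cong \mathcal L_{P\setminus i, \pi_{E\setminus i}(\mathbf n)}$.
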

\begin{proof}
  If $i$ is a loop of $P$, every element of $E_i$ is a loop of $\widetilde P$, and the $\mathfrak S_{E_i}$ factor of $\mathfrak S$ acts trivially on $\mathcal L_{\widetilde P}$.
  Hence,
  $\mathcal L_{P, \mathbf n} \cong \mathcal L_{\widetilde P} / \mathfrak S  \cong \mathcal L_{\widetilde{P \setminus i}} /  \pi_{E \setminus i}(\mathfrak S) \cong \mathcal L_{P\setminus i, \pi_{E \setminus i}(\mathbf n)}$.
\end{proof}

\begin{proof}[Proof of \cref{combo}]
  Define $\mathcal L_P := \mathcal L_{P, \mathbf n}$, with $\mathbf n$ any cage for $P$.
  This definition does not depend on the choice of cage because we may assume that $P$ is loopless by \cref{deloop}, then repeatedly apply \cref{reductioniso} to produce an isomorphism $\mathcal L_{P, \mathbf n} \cong \mathcal L_{P, (\rk_P(1), \ldots, \rk_P(N))}$.
  We will henceforth assume that $\mathbf n = (\rk_P(1), \ldots, \rk_P(N))$.

  Statements \cref{combo:semimodularlattice} and \cref{combo:topheavy} are the content of \cref{lattice}; we now move to proving \cref{combo:simple}.
  To construct $P^\simp$: let $(P', \mathbf n') = R_i(P \setminus L, \pi_{E \setminus L}(\mathbf n))$, where $L$ is the set of loops of $P$ and $i \in E \setminus L$ is any element such that $\{i\}$ is not a flat.
  If the resulting polymatroid $P'$ is simple, then take $P^\simp := P'$; otherwise, repeat these steps, replacing $(P,\mathbf n)$ with $(P', \mathbf n')$.
  This process terminates because the cage shrinks at each step.
  If the cage is $\mathbf 0$, then $P$ is the polymatroid of rank 0 on zero elements, which is simple; otherwise, the process stopped before the cage was equal to $\mathbf 0$, meaning that a simple polymatroid was produced.

  Lemmas \ref{reductioniso} and \ref{deloop} show that $\mathcal L_{P^\simp} \cong \mathcal L_P$, and \cref{determined} implies $P^\simp$ is the unique simple polymatroid with this property.
 This completes the proof of \cref{combo}.
\end{proof}
\begin{rmk}\label{rmk:realizingreduction}
  If $P$ is loopless and realized by a subspace arrangement $V_1, \ldots, V_N \subset V$, then
  $P$ fails to be simple if and only if $V_i \subset V_j \subsetneq V$  for some $i \neq j$.
  In this case, a step of the ``simplification process'' used to construct $P^\simp$ in the proof of \cref{combo} corresponds to replacing  $V_i$ with $V_i + \ell$, where $\ell \subset V$ is a generic line, then deleting loops.
  Alternatively, one may view this as replacing $V_i$ with its immediate successor $V_{i,\codim V_i - 1}$ in a general flag containing $V_i$ (see \cref{geometryofcf} and \cref{zb:simplification}).
\end{rmk}
\begin{zb}\label{zb:simplification}
  Suppose that $P$ is realized by the subspace arrangement $V_1 = V_2 = 0 \subseteq V = \C^2$.
  Complete these subspaces to general flags $0 = V_1 \subsetneq V_{1,1} \subsetneq V$ and $0 = V_2 \subsetneq V_{2,1} \subsetneq V$, and let $\mathbf{n} = (2,2)$. The steps of the reduction process for $(P,\mathbf n)$ are illustrated below. 
    
\begin{center}    
      \centering
      \begin{minipage}{0.3\textwidth}
\centering
\begin{tikzpicture}
    \node (first) at (0,1.6) {$\mathcal{L}_{P,\mathbf{n}} = \{\mathbf{0}, \mathbf{e_1}, \mathbf{e_2}, 2\mathbf{e_1} + 2\mathbf{e_2}\}$} ;
    \filldraw (0,0) circle (2pt) node[left=10pt] {$V_1=V_2$};
    
    \draw[dotted, thick] (-2,-1) -- (2,1) node[near end, above] {$V_{1,1}$};
    \draw[dotted, thick] (-2,1) -- (2,-1) node[near end, below] {$V_{2,1}$};

\end{tikzpicture}
\end{minipage}%
\quad
\begin{minipage}{0.3\textwidth}
\centering
\begin{tikzpicture}
     \node (first) at (0,1.6) {$\mathcal{L}_{R_1(P,\mathbf{n})} = \{\mathbf{0}, \mathbf{e_1}, \mathbf{e_2}, \mathbf{e_1} + 2\mathbf{e_2}\}$} ;
    \filldraw (0,0) circle (2pt) node[left=10pt] {$V_2$};
    
    \draw[ thick] (-2,-1) -- (2,1) node[near end, above] {$V_1$};
    \draw[dotted, thick] (-2,1) -- (2,-1) node[near end, below] {$V_{2,1}$};

\end{tikzpicture}
\end{minipage}%
\quad
\begin{minipage}{0.3\textwidth}
\centering
\begin{tikzpicture}
       \node (first) at (0,1.6) {$\mathcal{L}_{R_2(R_1(P,\mathbf{n}))} = \{\mathbf{0}, \mathbf{e_1}, \mathbf{e_2}, \mathbf{e_1} + \mathbf{e_2}\}$} ;
    \filldraw (0,0) circle (2pt);
    
    \draw[ thick] (-2,-1) -- (2,1) node[near end, above] {$V_1$};
    \draw[ thick] (-2,1) -- (2,-1) node[near end, below] {$V_2$};

\end{tikzpicture}
\end{minipage}

\end{center}
\end{zb}
\begin{rmk}
  If $j \in E$ is such that $\{j\}$ is a flat of $P$ with positive rank, then $\{j\}$ is also a flat of $P^\simp$.
  This fact follows from the claim below, which also provides a concrete way to see that the simplification process terminates.
    \begin{claim}
    If $\{j\}$ is a flat of a loopless polymatroid $P$ and $i \in E \setminus j$, then either $\{i\}$ and $\{j\}$ are both flats of $R_i P$, or $i$ is a loop of $R_i P$ and $\{i,j\}$ is a flat of $R_i P$.
  \end{claim}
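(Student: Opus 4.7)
The plan is a direct case analysis from the definition of $R_iP$. The key initial observation is that $\{i\}\setminus i = \emptyset$, which gives $\rk_P(\{i\}) = \rk_P(\emptyset) + \rk_P(i)$ trivially; hence by definition $\rk_{R_iP}(\{i\}) = \rk_P(i) - 1$, and so $i$ is a loop of $R_iP$ if and only if $\rk_P(i) = 1$. This dichotomy determines the two cases of the claim.

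In Case~A ($\rk_P(i) \geq 2$), I would verify that both $\{i\}$ and $\{j\}$ are flats of $R_iP$. For $\{j\}$: since $P$ is loopless, the reduction rule does not fire on any subset of $E \setminus i$, so $\rk_{R_iP}(\{j\}) = \rk_P(j)$ and, for $k \in E \setminus \{i,j\}$, $\rk_{R_iP}(\{j,k\}) = \rk_P(\{j,k\}) > \rk_P(j)$ from $\{j\}$ being a flat of $P$. For $k=i$, one splits on whether $\rk_P(\{i,j\}) = \rk_P(j) + \rk_P(i)$: in the first subcase $\rk_{R_iP}(\{i,j\}) = \rk_P(j) + \rk_P(i) - 1 > \rk_P(j)$ using $\rk_P(i) \geq 2$, and in the second $\rk_{R_iP}(\{i,j\}) = \rk_P(\{i,j\}) > \rk_P(j)$ again from $\{j\}$ being a flat. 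A parallel computation establishes $\{i\}$: for each $k \neq i$, split on whether $\rk_P(\{i,k\}) = \rk_P(k) + \rk_P(i)$, and apply looplessness ($\rk_P(k) \geq 1$) in the ``equality'' subcase and monotonicity ($\rk_P(\{i,k\}) \geq \rk_P(i)$) in the strict subcase.

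In Case~B ($\rk_P(i) = 1$), $i$ is a loop of $R_iP$ by the opening observation, and I would verify $\{i,j\}$ is a flat of $R_iP$. In both subcases of the reduction definition applied to $\{i,j\}$, the arithmetic collapses to $\rk_{R_iP}(\{i,j\}) = \rk_P(j)$: directly when the rule fires, and via the forced equality $\rk_P(\{i,j\}) = \rk_P(j)$ imposed by monotonicity when it does not. For each $k \notin \{i,j\}$, one more subcase on whether $\rk_P(\{i,j,k\}) = \rk_P(\{j,k\}) + 1$ gives $\rk_{R_iP}(\{i,j,k\}) \geq \rk_P(\{j,k\}) > \rk_P(j)$, with the strict step coming from $\{j\}$ being a flat of $P$.

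The only obstacle is the bookkeeping: every verification of a flat condition requires splitting on whether the reduction rule fires, but each individual inequality is immediate from submodularity, monotonicity, looplessness, or the hypothesis on $\{j\}$. As a sanity check, the argument also admits a cleaner lift-based version: the assumption that $\{j\}$ is a flat of $P$ implies $E_j$ is a flat of $\widetilde P$, and deleting any $e \in E_i$ (which is disjoint from $E_j$) preserves this, so $\{j\}$ is automatically a flat of $R_iP$; the Case~A versus Case~B distinction then amounts to whether $E_i \setminus e$ remains a flat of $\widetilde P \setminus e$, which once more reduces to the split $\rk_P(i) \geq 2$ versus $\rk_P(i) = 1$.
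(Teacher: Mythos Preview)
Your proof is correct and follows essentially the same approach as the paper: a case split on whether $i$ becomes a loop of $R_iP$ (equivalently, whether $\rk_P(i)=1$), followed by direct verification of the flat conditions using the definition of $R_iP$, looplessness, and the hypothesis that $\{j\}$ is a flat. The only differences are organizational---the paper splits the $\{i\}$-flat check on whether $k$ lies in the closure of $\{i\}$ rather than on whether the reduction rule fires, and in the loop case it invokes ``$i$ is a loop of $R_iP$'' directly to collapse $\rk_{R_iP}(\{i,j,k\})=\rk_{R_iP}(\{j,k\})$---but these lead to the same inequalities.
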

  \begin{claimproof}
    Suppose $i$ is a loop of $R_i P$.
    If $k \in E \setminus \{i,j\}$, then
    \[ \rk_{R_i P}(\{i,j,k\}) = \rk_{R_i P}(\{j,k\}) = \rk_P(\{j,k\}) > \rk_P(j) = \rk_{R_i P}(\{i,j\}), \]
    where the second equality holds because $P$ is loopless. Hence, $\{i,j\}$ is a flat of $R_i P$.

    Otherwise, suppose $i$ is not a loop of $R_i P$. In this case, $\rk_P(i) \geq 2$.
    If $k \in E \setminus i$ lies in the closure of $\{i\}$ in $P$, then
    \[
      \rk_{R_i P}(\{i,k\}) = \rk_P(\{i,k\}) = \rk_P(i) > \rk_P(i) - 1 = \rk_{R_i P}(i).
    \]
    If $k$ does not lie in the closure of $\{i\}$ in $P$, then
    \[
      \rk_{R_i P}(\{i,k\}) \geq \rk_P(\{i,k\}) - 1 > \rk_P(i) - 1 = \rk_{R_i P}(i).
    \]
    This shows that $\{i\}$ is a flat of $R_i P$. One similarly checks that $\{j\}$ remains a flat in $R_i P$.
    If $k \in E \setminus \{i,j\}$, then
    \[
      \rk_{R_i P}(\{j,k\}) = \rk_P(\{j,k\}) > \rk_P(j) = \rk_{R_i P}(j).
    \]
    If $\rk_P(\{i,j\}) = \rk_P(j) + \rk_P(i)$, then
    \[
      \rk_{R_i P}(\{i,j\}) = \rk_P(j) + \rk_P(i) - 1 \geq \rk_P(j) + 1 > \rk_P(j) = \rk_{R_i P}(j);
    \]
    otherwise,
    \[
      \rk_{R_i P}(\{i,j\}) = \rk_P(\{i,j\}) > \rk_P(j) = \rk_{R_i P}(j).
    \]
  \end{claimproof}
\end{rmk}

\subsection{}\label{comboflataxioms}
We axiomatically characterize posets of combinatorial flats.
In light of \cref{combo}, this provides a cryptomorphic definition of simple polymatroids, generalizing the equivalence of geometric lattices and simple matroids.

Let $\mathcal L$ be a graded lattice with minimum element $\hat 0$.
The \word{nullity} of $e \in \mathcal L$ is
\[
  \nul(e) = \#\{\text{join-irreducibles below $e$}\} - \rk_{\mathcal L}(e),
\]
where  $\rk_{\mathcal L}(e)$ is the length of any saturated chain in $[\hat 0, e]$.
The \word{geometric part} of $e$, denoted $e^\geo$, is the join of all elements of $[\hat 0, e]$ that are maximal irreducibles of $\mathcal L$.
\begin{thm}\label{latticeaxiom}
  A \revision{graded} poset $\mathcal L$ is the lattice of combinatorial flats of a polymatroid $P$ if and only if
  \begin{itemize}
  \item $\mathcal L$ is a semimodular lattice,
  \item the join-irreducibles of $\mathcal L \setminus \hat 0$ form a downward-closed set, and 
  \item for any $e \in \mathcal L$, $\nul(e) = \nul(e^\geo)$.
  \end{itemize}
\end{thm}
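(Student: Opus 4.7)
The plan is to prove the two directions of the biconditional separately. For the forward direction, I assume $\mathcal L = \mathcal L_P$ and apply \cref{combo}\cref{combo:simple} to replace $P$ by its simplification, taking $\mathbf n = (\rk_P(1), \ldots, \rk_P(N))$. Then \cref{joinirreds} describes the join-irreducibles as the multisets $k \mathbf e_i$ with $1 \leq k \leq n_i$; downward-closure in $\mathcal L \setminus \hat 0$ is immediate, since any nonzero combinatorial flat below $k \mathbf e_i$ has the form $\ell \mathbf e_i$ for some $1 \leq \ell \leq k$. Semimodularity is \cref{lattice}. For the nullity identity, the number of join-irreducibles below a combinatorial flat $\mathbf s$ equals $|\mathbf s|$, and the rank formula of \cref{combrankformula} gives $\rk_P(\mathbf s) - \rk_P(\mathbf s^\geo) = |\mathbf s| - |\mathbf s^\geo|$, so $\nul(\mathbf s) = \nul(\mathbf s^\geo)$ by subtraction.

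For the reverse direction, let $x_1, \ldots, x_N$ be the maximal join-irreducibles of $\mathcal L$, set $n_i := \rk_{\mathcal L}(x_i)$, and define $P$ on $E = \{1, \ldots, N\}$ by $\rk_P(S) := \rk_{\mathcal L}(\bigvee_{i \in S} x_i)$; semimodularity of $\mathcal L$ implies submodularity of $\rk_P$, so $P$ is a polymatroid. The downward-closure axiom implies each interval $[\hat 0, x_i]$ is a chain of length $n_i$, which allows me to define the candidate isomorphism $\phi : \mathcal L \to \mathcal L_{P, \mathbf n}$ by $\phi(e) := (\rk_{\mathcal L}(e \wedge x_1), \ldots, \rk_{\mathcal L}(e \wedge x_N))$. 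Order-preservation is clear. Injectivity follows from the identity $e = \bigvee_i (e \wedge x_i)$, which holds because every finite lattice element is the join of the join-irreducibles it dominates and every join-irreducible lies in some chain $[\hat 0, x_i]$.

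The main obstacle is showing that $\phi$ is a bijection onto $\mathcal L_{P, \mathbf n}$, which reduces to the rank identity $\rk_P(\phi(e)) = \rk_{\mathcal L}(e)$. This is where the nullity axiom enters essentially: unpacking the polymatroidal rank of $\phi(e)$ via \cref{indeprank} and the multisymmetric lift $\widetilde P$, the discrepancy between $|\phi(e)| = \sum_i \rk_{\mathcal L}(e \wedge x_i)$ and $\rk_P(\phi(e))$ should equal $\nul(e)$, and the axiom $\nul(e) = \nul(e^\geo)$ supplies the inductive step needed to pin this down along a saturated chain from $\hat 0$ to $e$. Once rank-preservation is in hand, the combinatorial-flat condition $\rk_P(\phi(e) + \mathbf e_i) > \rk_P(\phi(e))$ for $\phi(e)_i < n_i$ translates cleanly into a strict increase of lattice rank upon joining $e$ with the next element of $[\hat 0, x_i]$, which holds by gradedness. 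Surjectivity then follows from the inverse construction $\mathbf s \mapsto \bigvee_i s_i(\mathbf s)$, with $s_i(\mathbf s)$ the unique element of rank $s_i$ in the chain $[\hat 0, x_i]$, and bookkeeping the interplay between polymatroidal independent multisets and the abstract lattice chains is the bulk of the technical work.
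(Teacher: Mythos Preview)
Your forward direction is correct and essentially matches the paper. The reverse direction sets up the same polymatroid $P$ and the same two maps as the paper (your $\phi$ is the paper's $\varphi'$, and your ``inverse construction'' $\mathbf s \mapsto \bigvee_i x_{i,s_i}$ is the paper's $\varphi$), but the argument you sketch for the crucial identity $\rk_P(\phi(e)) = \rk_{\mathcal L}(e)$ has a gap.

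You propose to prove this equality by ``induction along a saturated chain from $\hat 0$ to $e$'', with the axiom $\nul(e) = \nul(e^\geo)$ supplying the step. But that axiom compares $e$ to $e^\geo$, not to a cover $e' \lessdot e$; along a saturated chain the number of join-irreducibles below $e$ can jump by more than one, so there is no evident reason $\rk_P(\phi(e))$ should increase by exactly one at each step. Your subsequent claim that the combinatorial-flat condition for $\phi(e)$ ``translates cleanly'' also falls short: joining $e$ with $x_{i,\phi(e)_i+1}$ and invoking rank-preservation only yields $\rk_P\bigl(\phi(e \vee x_{i,\phi(e)_i+1})\bigr) > \rk_P(\phi(e))$, and since $\phi(e \vee x_{i,\phi(e)_i+1})$ may strictly exceed $\phi(e) + \mathbf e_i$, this does not force $\rk_P(\phi(e) + \mathbf e_i) > \rk_P(\phi(e))$.

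The paper sidesteps these issues by never proving the rank \emph{equality} directly. Instead it observes that both compositions $\varphi \circ \varphi'$ and $\varphi' \circ \varphi$ are increasing, and then proves that each of $\varphi$ and $\varphi'$ \emph{decreases} rank; an increasing rank-decreasing self-map of a finite graded poset must be the identity. The nullity axiom is used once, non-inductively, to show $\varphi'$ decreases rank: unwinding $\nul(x) = \nul(x^\geo)$ gives the formula
\[
  \rk_{\mathcal L}(x) \;=\; \rk_{\mathcal L}(x^\geo) + \sum_{i : x_i \not\leq x} \max\{j : x_{i,j} \leq x\},
\]
and since $\rk_{\mathcal L}(x^\geo) = \rk_P(\{i : x_i \leq x\})$, one application of submodularity in $\mathcal L_{P,\mathbf n}$ bounds the right-hand side below by $\rk_P(\varphi'(x))$. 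Proving two inequalities is much easier than your single equality, and it makes the role of the nullity axiom transparent.
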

\begin{proof}
  If $P$ is a polymatroid (which we may assume simple by \cref{combo}\cref{combo:simple}), then the prescribed properties hold for $\mathcal L_P$ by \cref{combo}\cref{combo:semimodularlattice}, \cref{joinirreds}, and \cref{combrankformula}.

  Conversely, suppose that $\mathcal L$ is a poset with the prescribed properties.
  Let $E$ be the set of maximal join-irreducibles of $L$ and define a simple polymatroid $P$ by $\rk_P(A) := \rk_{\mathcal L}(\vee_{i \in A} i)$.
  Let $\mathbf n = (\rk_{\mathcal L}(i))_{i \in E}$.

  Since the irreducibles of $\mathcal L$ are downward closed, for each $i \in E$,
  $[\hat 0, i]  = \{\hat 0 < x_{i,1} < \cdots < x_{i, n_i} = i\}$.
  By construction of $P$, the following are equivalent:
  \begin{itemize}
  \item $F \subset E$ is a flat of rank $r$
  \item  $\{i : x_{i,n_i} \leq \vee_{j \in F} x_{j, n_j}\} = F$ and $\rk_{\mathcal L}(\vee_{j \in F}  x_{j, n_j}) = r$,
  \item and $\sum_{j \in F} n_j \mathbf e_j$ is a combinatorial flat of rank $r$.
  \end{itemize}
  Define maps
  \begin{align*}
    \ph: \mathcal L_P \cong \mathcal L_{P, \mathbf n} \to \mathcal L,& \quad \ph(\mathbf s) = \vee_{i \in E}\, x_{i, s_i} \\
    \ph': \mathcal L \to \mathcal L_{P, \mathbf n} \cong \mathcal L_P, &\quad x \mapsto \vee_{x_{i,j} \leq x}\, j \mathbf e_i.
  \end{align*}
  Plainly, the compositions $\ph \circ \ph'$ and $\ph' \circ \ph$ are increasing maps \revision{(i.e. $(\ph \circ \ph')(x) \geq x$ and $(\ph' \circ \ph)(\mathbf s) \geq \mathbf s$)}.
  To finish, we show that both $\ph$ and $\ph'$ decrease rank.
  Let $\mathbf s$ be a combinatorial flat and let $F = \{i : s_i = n_i\}$, so that $\mathbf s^\geo = \sum_{i \in F} n_i \mathbf e_i$.
  Using submodularity of $\rk_{\mathcal L}$, our discussion of flats of $P$, and \cref{combrankformula}, we obtain
  \begin{align*}
    \rk_{\mathcal L}(\ph(\mathbf s))
    &\leq \rk_{\mathcal L}(\vee_{i \in F} x_{i, n_i}) + \sum_{i \in E \setminus F} s_j 
    = \rk_P(F) + \sum_{i \in E \setminus F} s_j
    = \rk_P(\mathbf s^\geo) + |\mathbf s - \mathbf s^\geo|
    = \rk_P(\mathbf s).
  \end{align*}
  This shows that $\ph$ decreases rank.
  On the other hand, if $x \in \mathcal L$ and $F = \{i : x_{i, n_i} \leq x\}$, then
  \[ \sum_{i \in E} \max \{j : x_{ij} \leq x\} - \rk_{\mathcal L}(x) = \nul(x) = \nul(x^\geo) = \sum_{i: x_{i, n_i} \leq x} n_i - \rk_{\mathcal L}(x^\geo), \]
  so
  \begin{align*}
    \rk_{\mathcal L}(x) &= \rk_{\mathcal L}(x^\geo) + \sum_{i : x_{i, n_i} \not \leq x} \max\{j : x_{i,j} \leq x\}\\
    &= \rk_P\left(\sum_{i \in F} n_i \mathbf e_i\right) + \sum_{i : x_{i, n_i} \not \leq x} \max\{j : x_{i,j} \leq x\}
    \geq \rk_P( \vee_{x_{i,j} \leq x} j \mathbf e_j) = \rk_P(\ph'(x)).
  \end{align*}
  This shows that $\ph'$ also decreases rank.
  Hence, both $\ph \circ \ph'$ and $\ph' \circ \ph$ are increasing maps that decrease rank, which means they must be the identity.
\end{proof}

\section{Schubert varieties of subspace arrangements}\label{sec:polymatroidschubert}
We introduce and study the Schubert variety $Y_V \subset \P^{\mathbf n}$ associated to a linear subspace $V \subset \K^{\mathbf n}$, with $\K$ an algebraically closed field of characteristic 0.
Our construction is founded on work of Hassett and Tschinkel \cite{HT99}, which we review in Sections \ref{sec_HT} and \ref{rmk_Gm}.
We construct $Y_V$ in \cref{polymatschubertdfn}, then in \cref{chowclass} compute the support of its class in the Chow ring of $\P^{\mathbf n}$.
In \cref{pgprops}, we discuss ``polymatroid generality'', a condition that guarantees $Y_V$ consists of finitely many $V$-orbits.
We assume this condition in \cref{geomproofs}, where we use the results of \cref{chowclass} to identify strata of $Y_V$, ending with a proof of \cref{main:stratification}.
Finally, in \cref{sec:topresults}, we give a presentation for the singular cohomology ring of $Y_V$ when $\K = \C$ and $V$ is polymatroid general.

\medskip

\subsection{}\label{sec_HT}
We explain a construction of Hassett and Tschinkel \cite{HT99}, which we will eventually think of as the variety associated to a polymatroid on one element.
Loosely speaking, Hassett and Tschinkel show that there is only one action of the $n$-dimensional additive group $\G_a^n$ on $\P^n$ with finitely many orbits. More precisely:
\begin{thm}\cite[Proposition 3.7]{HT99}\label{HT}
  Over an algebraically closed field of characteristic 0, there is a left action of $\G_a^n$ on $\P^n$ with finitely many orbits.
  The action is unique up to isomorphism of $\G_a^n$-variety structures.
\end{thm}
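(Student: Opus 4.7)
The plan is to prove existence by exhibiting a concrete action built from the truncated polynomial ring $A = k[t]/(t^{n+1})$, and to prove uniqueness via the Hassett--Tschinkel correspondence between $\G_a^n$-actions on $\P^n$ with an open orbit and $(n+1)$-dimensional commutative local $k$-algebras; the finite-orbit hypothesis will then pin down the algebra.

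For existence, I would take $A = k[t]/(t^{n+1})$, a local Artinian $k$-algebra with nilpotent maximal ideal $\mathfrak m = (t)$. Because $\mathrm{char}\, k = 0$ and $\mathfrak m$ is nilpotent, the truncated exponential $\exp\colon \mathfrak m \to 1 + \mathfrak m$ is a well-defined isomorphism of algebraic groups from $(\mathfrak m, +) \cong \G_a^n$ onto the principal units $G := 1 + \mathfrak m$. Multiplication in $A$ gives $G$ a $k$-linear action on $A \cong k^{n+1}$, which descends to an action on $\P(A) \cong \P^n$. Two nonzero $v, w \in A$ lie in the same projective $G$-orbit iff they differ by a unit of $A^* = k^* \cdot (1 + \mathfrak m)$, iff $Av = Aw$ as principal ideals. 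The principal ideals of $A$ are exactly $(t^i)$ for $i = 0, 1, \ldots, n$, giving $n+1$ orbits arranged in a chain of dimensions $n, n-1, \ldots, 0$.

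For uniqueness, suppose $\G_a^n$ acts on $\P^n$ with finitely many orbits. A dimension count forces the existence of an open orbit on which the action is necessarily free. Embedding $\G_a^n \hookrightarrow \mathrm{Aut}(\P^n) = \mathrm{PGL}_{n+1}$ produces a commutative unipotent subgroup whose Lie algebra $\mathfrak a \subset \mathfrak{sl}_{n+1}$ is an $n$-dimensional space of pairwise-commuting nilpotents. The subspace $A := k \cdot I \oplus \mathfrak a \subset M_{n+1}(k)$ is then a commutative local $k$-subalgebra of dimension $n+1$ with maximal ideal $\mathfrak a$, and the original action is recovered as the multiplication action of $1 + \mathfrak a$ on $\P(A)$ identified with $\P^n$ via any cyclic vector.

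The main obstacle is translating the finite-orbit hypothesis into the rigidity $A \cong k[t]/(t^{n+1})$. The same argument from the existence step shows that $G$-orbits on $\P(A)$ correspond bijectively to nonzero principal ideals of $A$, so $A$ must have only finitely many. If $\dim_k \mathfrak a^i/\mathfrak a^{i+1} > 1$ for some $i$, then the elements of $\mathfrak a^i \setminus \mathfrak a^{i+1}$ yield a positive-dimensional projective family of distinct principal ideals, contradicting finiteness. Hence $\dim_k \mathfrak a^i/\mathfrak a^{i+1} \leq 1$ for every $i$, and combined with $\dim_k A = n+1$ and nilpotence of $\mathfrak a$, this forces $A \cong k[t]/(t^{n+1})$. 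The correspondence therefore shows the $\G_a^n$-action is unique up to $\mathrm{PGL}_{n+1}$-conjugation, which is exactly the claimed uniqueness of $\G_a^n$-variety structures.
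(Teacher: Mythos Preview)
The paper does not prove this statement at all: it is quoted directly from \cite{HT99} and followed only by an explicit coordinate description of the resulting representation $\rho_n$ via Bell polynomials. There is therefore no ``paper's own proof'' to compare against.

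Your argument is correct and is essentially the Hassett--Tschinkel argument itself. The correspondence between faithful $\G_a^n$-actions on $\P^n$ with an open orbit and $(n{+}1)$-dimensional commutative local $k$-algebras with a cyclic vector is precisely the content of \cite{HT99}, and your identification of orbits with nonzero principal ideals, followed by the graded-piece count forcing $A \cong k[t]/(t^{n+1})$, is the right way to extract the finite-orbit case. One small point you glide over: the embedding $\G_a^n \hookrightarrow \mathrm{PGL}_{n+1}$ must be lifted to $\mathrm{GL}_{n+1}$ before you can speak of $\mathfrak a \subset M_{n+1}(k)$ and form the subalgebra $A = kI \oplus \mathfrak a$; this lift exists because $\G_a^n$ is unipotent, but it is worth saying explicitly. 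Similarly, the existence of a cyclic vector is exactly the open-orbit condition, which you have, but the identification $\P^n \cong \P(A)$ via $a \mapsto av$ deserves a sentence. With those two clarifications the proof is complete.
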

To make matters concrete, choose coordinates $a_1, \ldots, a_n$ and $b_0,\ldots, b_n$ on  $\Ga^n$ and $\P^n$, respectively.
The action on $\P^n$ is given by the faithful representation $\rho_n: \Ga^n \to \Aut(\P^n) = \PGL_{n+1}$,
\[
  \mathbf a = (a_1, \ldots, a_n) \mapsto \exp
                                  \begin{pmatrix}
                                    0 & 0 & 0 & \cdots & 0\\
                                    a_1 & 0 & 0 & \ddots & \vdots\\
                                    a_2 & a_1 & 0 & \ddots & 0 \\
                                    \vdots & \ddots & \ddots & \ddots & 0 \\
                                    a_n & \cdots & a_2 & a_1 & 0
                                  \end{pmatrix}.
\]
The entries in each northwest-to-southeast diagonal of $\rho_n(\mathbf a)$ are all equal to one another.
Explicitly, the $(i,j)$ entry is
\[
  \big(\rho_n(\mathbf a)\big)_{i,j} =
  \begin{cases}
    \frac{1}{(i-j)!} B_{i-j}(1! a_1, 2! a_2, \ldots, (i-j)! a_{i-j}), & i \geq j \\
    0, & \text{otherwise,}
  \end{cases}
\]
where $B_k$ is the (exponential) \word{Bell polynomial} of degree $k$, defined by  $B_0 = 1$ and
\[
  B_i(x_1, \ldots, x_i) = i! \sum_{j_1+2j_2 + \cdots + i j_i = i} \frac{1}{j_1! j_2! \cdots j_i!}
  \left(\frac{x_1}{1!}\right)^{j_1}
  \left(\frac{x_2}{2!}\right)^{j_2} \cdots
  \left(\frac{x_i}{i!}\right)^{j_i}
  \quad \text{for $i \geq 1$.}
\]
This description implies the following facts.
\begin{prop}
  The action of $\Ga^n$ on $\P^n$ partitions $\P^n$ into orbits
  \[ O_k \coloneqq \{ b_0 = b_1 = \cdots = b_{n-k-1} = 0, \, b_{n-k} \neq 0 \}, \quad 0 \leq k \leq n. \]
  The stabilizer of any point of $O_k$ is
  \[ \St_k \coloneqq \{ a_1 = a_2 = \cdots = a_{k} = 0 \} \subset \Ga^n, \]
  and the closure of $O_k$ in $\P^n$ is $\overline{O_k} = \cup_{i \leq k} \,O_i$.
\end{prop}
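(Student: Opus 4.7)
The plan is to exploit the explicit description of $\rho_n(\mathbf{a})$ as a lower-triangular unipotent matrix whose $(i,j)$-entry (for $i > j$) depends only on $a_1, \ldots, a_{i-j}$. First, each $O_k$ is easily seen to be $\Ga^n$-invariant: lower triangularity of $\rho_n(\mathbf{a})$ ensures that the first $n-k$ coordinates of $\rho_n(\mathbf{a})\cdot p$ vanish whenever those of $p$ do, and the unit diagonal entries (coming from $B_0 = 1$) ensure the $(n-k)$-th coordinate of $\rho_n(\mathbf{a})\cdot p$ equals $b_{n-k}(p)$, hence is nonzero.

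To show each $O_k$ is a single orbit, I would compute the orbit of the distinguished point $p_k := [0: \cdots : 0 : 1 : 0 : \cdots : 0]$ with the $1$ in position $n-k$. Its $j$-th coordinate under $\rho_n(\mathbf{a})$ is the $(j, n-k)$-entry of $\rho_n(\mathbf{a})$: this equals $0$ for $j < n-k$, equals $1$ for $j = n-k$, and equals $\frac{1}{\ell!}\,B_\ell(1!a_1, \ldots, \ell!a_\ell)$ for $j = n-k+\ell$ with $1 \leq \ell \leq k$. The key observation is that the Bell polynomial $B_\ell(x_1, \ldots, x_\ell)$, viewed as a polynomial in $x_\ell$ over $\Z[x_1, \ldots, x_{\ell-1}]$, has leading term $x_\ell$ (the summand indexed by $j_1 = \cdots = j_{\ell-1} = 0$, $j_\ell = 1$ is the unique contribution linear in $x_\ell$). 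Consequently the $j$-th coordinate has the form $a_\ell + f_\ell(a_1, \ldots, a_{\ell-1})$ for some polynomial $f_\ell$, and this triangular change of variables shows that $(a_1, \ldots, a_k) \mapsto \rho_n(\mathbf{a}) \cdot p_k$ surjects onto the affine chart $O_k \cap \{b_{n-k} = 1\}$. Since any point of $O_k$ can be scaled to lie on this chart, the $\Ga^n$-orbit of $p_k$ is all of $O_k$.

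The stabilizer of $p_k$ is read off the same computation: $\rho_n(\mathbf{a}) \cdot p_k = p_k$ forces $a_\ell + f_\ell(a_1, \ldots, a_{\ell-1}) = 0$ for every $1 \leq \ell \leq k$, which inductively forces $a_1 = \cdots = a_k = 0$ while leaving $a_{k+1}, \ldots, a_n$ unconstrained. Since $\Ga^n$ is abelian, this stabilizer is common to all points of $O_k$, yielding $\St_k$. Finally, for the closure: the Zariski-closed set $Z_k := \{b_0 = \cdots = b_{n-k-1} = 0\} \cong \P^k$ is the disjoint union $\bigsqcup_{i \leq k} O_i$ as a set, and $O_k$ is its dense open subset $\{b_{n-k} \neq 0\}$, so $\overline{O_k} = Z_k = \bigcup_{i \leq k} O_i$. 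The only nonroutine step is the leading-coefficient identity for $B_\ell$; once that is granted, the remainder is a direct unwinding of the explicit matrix formula.
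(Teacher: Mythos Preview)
Your argument is correct and is precisely the kind of direct verification the paper has in mind: the paper does not give a proof of this proposition at all, stating only that ``this description implies the following facts,'' and your proposal is a careful unpacking of that implication from the explicit Bell-polynomial formula for $\rho_n(\mathbf a)$. The one point worth making explicit in your stabilizer step is that $f_\ell(0,\ldots,0)=0$ (since every monomial of $B_\ell$ has positive weighted degree), which is what makes the induction go through cleanly; otherwise nothing is missing.
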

\begin{zb}
  The action of $\G_a^2$ on $\P^2$ is 
  \[ (a_1, a_2) \cdot [b_0 : b_1 : b_2] =
    \begin{pmatrix}
      1 & 0 & 0 \\
      a_1 & 1 & 0 \\
      \frac12 a_1^2 + a_2 & a_1 & 1
    \end{pmatrix}
    \cdot [b_0 : b_1 : b_2]
    = \left[b_0 : a_1 b_0 + b_1 : \tfrac12 a_1^2 b_0 + a_2 b_0 + a_1 b_1 + b_2 \right]. \]
\end{zb}

\subsection{}\label{rmk_Gm} Consider now the semidirect product  $\Ga^n \rtimes \Gm$ \revision{(with $n \geq 1$)}, in which $\Gm$ acts on $\Ga^n$ by $t \cdot \mathbf a = (t a_1, t^2 a_2, \ldots, t^n a_n)$.
    If we define
\[
 \lambda_n: \Gm \to \Aut(\P^n), \quad  t \mapsto \begin{pmatrix}
                                                1 &   & &&\\
                                                  & t &  &&\\
                                                  & & t^2 && \\
                                                  &&&\ddots & \\
                                                &&&& t^n
                                               \end{pmatrix} \in \Aut(\P^n),
\]
then $\rho_n$ extends to an injective homomorphism
\[ \Ga^n \rtimes \Gm \hookrightarrow \Aut(\P^n), \quad (\mathbf a,t) \mapsto \rho_n(\mathbf a) \lambda_n(t), \]
in which the image of $\lambda_n$ normalizes the image of $\rho_n$.
The orbits of $\Ga^n \rtimes \Gm$ on $\P^n$ are the same as those of $\Ga^n$, but each $\Ga^n$-orbit $O_i$ contains a unique $\Gm$-fixed point. In coordinates, the fixed point is $[\mathbf e_{n+1-i}]$. Consequently, we may canonically identify $O_i$ with $\Ga^n / \St_i$ via the map $\mathbf a \mapsto \rho_n(\mathbf a) \cdot [\mathbf e_{n+1-i}]$.

\subsection{} Throughout the remainder of \cref{sec:polymatroidschubert}, fix notation:
\begin{itemize}
\item $V \subset \K^{\mathbf n} = \K^{n_1} \times \cdots \times \K^{n_N}$ is a $d$-dimensional linear subspace,
\item $a_{ij}$ and $b_{ik}$, with $1 \leq i \leq N$, $1 \leq j \leq n_i$, and $0 \leq k \leq n_i$, are coordinates on $\K^{\mathbf n}$ and $\P^{\mathbf n}$, respectively,
\item $P$ is the polymatroid on $E = \{1, \ldots, N\}$ associated to $V$,
\item $\widetilde P$ is the lift of $P$ with respect to $\mathbf n$, on ground set $\widetilde E = E_1 \sqcup \cdots \sqcup E_N$.
\end{itemize}
Phrases like ``combinatorial flat of $V$'' should be taken to mean ``combinatorial flat of $(P, \mathbf n)$.''
In Sections \ref{geomproofs} and \ref{sec:topresults}, we will assume that $V$ is ``polymatroid general'', in the sense defined in \cref{pgprops}.

\subsection{}\label{polymatschubertdfn}
We construct Schubert varieties of subspace arrangements, bootstrapping from the Hassett-Tschinkel action of \cref{sec_HT}.
\revision{Identifying $\G_a^{n_i}$ and $\K^{n_i}$,} the Hassett-Tschinkel maps $\rho_{n_i}: \K^{n_i} \to \Aut(\P^{n_i})$ assemble to a map
$\rho: \K^{\mathbf n} \to \Aut(\P^{\mathbf n})$, allowing $\K^{\mathbf n}$ to act on $\P^{\mathbf n}$.
For each multiset $\mathbf s \leq \mathbf n$, there is an $|\mathbf s|$-dimensional orbit
$O_{\mathbf s} := O_{s_1} \times \cdots \times O_{s_N} \subset \P^{\mathbf n}$, whose closure is a product of projectivized coordinate subspaces $\P^{\mathbf s} := \P^{s_1} \times \cdots \times \P^{s_N} \subset \P^{\mathbf n}$.
All points in $O_{\mathbf s}$ have the same stabilizer, denoted by $\St_{\mathbf s}$.
The explicit description in \cref{sec_HT} shows that $\St_{\mathbf s}$ is the coordinate subspace of $\K^{\mathbf n}$ cut out by
\[
  a_{i,1} = a_{i,2} = \cdots = a_{i,s_i} = 0, \quad 1 \leq i \leq N.
\]
By \cref{rmk_Gm}, $\rho$ extends to an injection
\[ \K^{\mathbf n}  \rtimes \Gm \hookrightarrow \Aut(\P^{\mathbf n}), \]
and each orbit $O_{\mathbf s}$ contains a unique $\Gm$-fixed point $([\mathbf e_{n_1 +1 - s_1}], \ldots, [\mathbf e_{n_N +1 - s_N}])$.
This yields a canonical isomorphism
\[ \iota: \K^{\mathbf n} \to O_{\mathbf n},  \quad \mathbf a \mapsto \rho(\mathbf a) \cdot ([\mathbf e_{1}], \ldots, [\mathbf e_{1}]). \]

\begin{dfn}
  The \word{Schubert variety} of $V \subset \K^{\mathbf n}$ is $Y_V := \overline{\iota(V)}$, the closure of $\iota(V)$ in $\P^{\mathbf n}$.
\end{dfn}

\subsection{}\label{chowclass}
We compute the support of the Chow class of $Y_V$.
Given a nonsingular variety $X$, let $A_d(X)$ be the Chow group of $X$ spanned by $d$-dimensional cycles.
If $Y$ and $Y'$ are subvarieties of $X$, then their intersection product is written $[Y] \cdot [Y']$.
\begin{lem}\label{chowbases}
  The Chow class of $Y_V$ in $A_d(\P^{\mathbf n})$ is of the form
  \[
    [Y_V] = \sum_{\mathbf b \text{ basis of $P$}} c_{\mathbf b}\, [\P^{\mathbf b}].
  \]
\end{lem}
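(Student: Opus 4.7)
My plan is to extract each coefficient $c_{\mathbf b}$ in the expansion $[Y_V] = \sum_{|\mathbf b| = d,\, \mathbf b \leq \mathbf n} c_{\mathbf b}[\P^{\mathbf b}]$ by pairing $[Y_V]$ with the dual class $[\P^{\mathbf n - \mathbf b}]$, and then rule out non-bases via the projection formula. The group $A_d(\P^{\mathbf n})$ has $\Z$-basis $\{[\P^{\mathbf b}] : \mathbf b \leq \mathbf n,\, |\mathbf b|=d\}$, and the intersection pairing with the basis $\{[\P^{\mathbf n - \mathbf b'}]\}$ of the complementary part of the Chow ring is diagonal, $[\P^{\mathbf b}] \cdot [\P^{\mathbf n - \mathbf b'}] = \delta_{\mathbf b, \mathbf b'}[\mathrm{pt}]$. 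Hence
\[
c_{\mathbf b} = \int_{\P^{\mathbf n}} [Y_V] \cdot \prod_i h_i^{b_i},
\]
where $h_i$ is the pullback of the hyperplane class from the $i$th factor.

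The geometric input I will use is the compatibility of the Hassett--Tschinkel embedding with coordinate projections. Since $\iota(\mathbf a) = \bigl(\rho_{n_i}(\mathbf a_i) \cdot [\mathbf e_1]\bigr)_i$ is built factor-by-factor, the projection $\pi_A : \P^{\mathbf n} \to \P^{\pi_A \mathbf n}$ satisfies $\pi_A \circ \iota = \iota_A \circ \pi_A$ on $\K^{\mathbf n}$, where $\iota_A$ denotes the Hassett--Tschinkel embedding for the factors in $A$. As $\pi_A$ is proper, closure commutes with image, yielding the key identification $\pi_A(Y_V) = \overline{\iota_A(\pi_A V)} = Y_{\pi_A V}$, an irreducible subvariety of $\P^{\pi_A \mathbf n}$ of dimension $\dim \pi_A V = \rk_P(A)$.

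Now suppose $\mathbf b$ is not a basis of $P$. Since $|\mathbf b| = d = \rk_P(E)$, this forces $\mathbf b$ to fail to be independent, so there exists $A \subset E$ with $|\pi_A(\mathbf b)| > \rk_P(A)$. Splitting $\prod_i h_i^{b_i} = \pi_A^*\!\bigl(\prod_{i \in A} h_i^{b_i}\bigr) \cdot \prod_{i \notin A} h_i^{b_i}$ and applying the projection formula,
\[
c_{\mathbf b} = \int_{\P^{\pi_A \mathbf n}} (\pi_A)_*\!\Bigl([Y_V] \cdot \prod_{i \notin A} h_i^{b_i}\Bigr) \cdot \prod_{i \in A} h_i^{b_i}.
\]
The pushed-forward class has dimension $d - \sum_{i \notin A} b_i = |\pi_A(\mathbf b)|$ yet is supported on $\pi_A(Y_V) = Y_{\pi_A V}$, a variety of strictly smaller dimension $\rk_P(A)$; any such class must vanish, so $c_{\mathbf b} = 0$. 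The main point to verify carefully is the identification $\pi_A(Y_V) = Y_{\pi_A V}$: the inclusion $\supseteq$ is immediate from $\pi_A \circ \iota = \iota_A \circ \pi_A$, while the reverse uses properness of $\pi_A$ so that the image of a closure is the closure of the image. Once this is secured, the rest is bookkeeping with the projection formula.
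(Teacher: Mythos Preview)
Your proof is correct and follows the same overall strategy as the paper: extract $c_{\mathbf b}$ by intersecting with $[\P^{\mathbf n-\mathbf b}]$, apply the projection formula along a subset of the factors, and kill the result by a dimension count on the image of $Y_V$.

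The one genuine difference is in how the projection set is chosen. The paper takes the specific set $T=\{i:\overline{s}_i=n_i\}$ coming from the closure of $\mathbf s$, and then invokes \cref{combrankformula} to verify $\rk_P(T)<\sum_{i\in T}s_i$. You instead pick any $A$ witnessing dependence directly from the definition, so the inequality $\rk_P(A)<|\pi_A(\mathbf b)|$ is immediate. Your route is more elementary---it bypasses the combinatorics of closures entirely---while the paper's choice of $T$ is canonical but costs a reference to Section~3. Both arguments rest on the same geometric fact $\dim\pi_A(Y_V)=\rk_P(A)$, which you make explicit via the clean identification $\pi_A(Y_V)=Y_{\pi_A V}$.
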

\begin{proof}
  Write $[Y_V] =\sum_{|\mathbf t| = d} c_{\mathbf t} [\P^{\mathbf t}]$.
  Let $\mathbf s$ be a multiset with $|\mathbf s| = d$ that is not a basis of $P$. We will show that $[\P^{\mathbf s}]$ has coefficient zero in $[Y_V]$.

  Let $T = \{i : \overline s_i = n_i\}$. This set is nonempty: if we suppose falsely that $T = \emptyset$, then by \cref{combrankformula}, $\mathbf s = \overline{\mathbf s}$. \revision{This means that $\mathbf s$ is a combinatorial flat satisfying $\mathbf s^\geo = \mathbf 0$, so by \cref{combrankformula} once more,
  \[ \rk_P(\mathbf s) = \rk_P(\mathbf s^\geo) + |\mathbf s - \mathbf s^\geo| = 0 + |\mathbf s| = d, \]
  contrary to the fact that $\mathbf s$ is not a basis.}

\revision{Let $\mathbf s' = \mathbf s - \sum_{i \in E \setminus T} s_i \mathbf e_i$.
The pushforward $\pi_{T*}: A_0(\P^{\mathbf n}) \to A_0(\P^{\pi_T(\mathbf n)})$ is an isomorphism, so $c_{\mathbf s} = 0$ if and only if $\pi_{T*}(c_{\mathbf s}[\pt])$ is zero in $A_*(\P^{\pi_T(\mathbf n)})$.
By the projection formula,
\[
  \pi_{T*}(c_{\mathbf s}[\pt]) = \pi_{T*}([\P^{\mathbf n - \mathbf s}]  \cdot [Y_V]) = \pi_{T*}([\P^{\mathbf n - \mathbf s'}] \cdot [\P^{\mathbf n - \mathbf s + \mathbf s'}] \cdot [Y_V]) = [\P^{\pi_T(\mathbf n - \mathbf s')}] \cdot \pi_{T*}([\P^{\mathbf n - \mathbf s + \mathbf s'}] \cdot [Y_V]).
\]
The class $[\P^{\mathbf n-\mathbf s + \mathbf s'}] \cdot [Y_V]$ is represented by a subvariety $Y'$ of $Y_V$, obtained by intersecting $Y_V$ with appropriately general hyperplanes.
To show that $[\P^{\pi_T(\mathbf n - \mathbf s')}] \cdot \pi_{T*}([\P^{\mathbf n - \mathbf s + \mathbf s'}] \cdot [Y_V]) = [\P^{\pi_T(\mathbf n - \mathbf s')}] \cdot \pi_{T*}([Y'])$ is zero, it suffices to show that $\dim \pi_T(Y') < |\pi_T(\mathbf s')|$.
   In fact, this is the case: by \cref{combrankformula},
  \begin{align*}
    \dim \pi_T(Y') \leq \dim \pi_T(Y_V) &= \rk_P(T) = \rk_{P, \mathbf n}(\overline{\mathbf s}^\geo) \\
                                                    &= \rk_{P, \mathbf n}(\overline{\mathbf s}) - \sum_{i \in E \setminus T} s_i < d - \sum_{i \in E \setminus T} s_i= \sum_{i \in T} s_i = |\pi_T(\mathbf s')|.
  \end{align*}
  We conclude that $c_{\mathbf s} = 0$.}
\end{proof}
\begin{lem}\label{chowclass:positivity}
  \revision{For each basis $\mathbf b$ of $P$, $c_{\mathbf b} > 0$.}
\end{lem}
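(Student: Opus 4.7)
I would first reinterpret $c_{\mathbf b}$ as a geometric intersection number. In $A^*(\P^{\mathbf n}) \cong \Z[H_1, \dots, H_N]/(H_i^{n_i+1})$, with $H_i$ the pullback of the hyperplane class from $\P^{n_i}$, one has $[\P^{\mathbf b'}] = \prod_i H_i^{n_i - b'_i}$. Pairing the formula of \cref{chowbases} with $\prod_i H_i^{b_i}$ and observing that the product $\prod_i H_i^{n_i - b'_i} \cdot \prod_i H_i^{b_i} = \prod_i H_i^{n_i - b'_i + b_i}$ is killed unless $b'_i \le b_i$ for every $i$ (which, together with $|\mathbf b'| = |\mathbf b| = d$, forces $\mathbf b' = \mathbf b$ and yields the class of a point), one obtains $c_{\mathbf b} = \deg\!\big([Y_V] \cdot \prod_i H_i^{b_i}\big)$. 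Represent each $H_i^{b_i}$ by a linear subspace $L_i \subset \P^{n_i}$ of codimension $b_i$. By Kleiman transversality in characteristic zero, applied via the transitive action of $\prod_i \Aut(\P^{n_i})$ on such tuples, the intersection $Y_V \cap \prod_i L_i$ is transverse, $0$-dimensional, and reduced for generic $(L_i)$, so $c_{\mathbf b}$ is precisely the cardinality of this intersection, a non-negative integer.

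Positivity thus reduces to non-emptiness of $Y_V \cap \prod_i L_i$ for some (equivalently, generic) choice of $(L_i)$. Since $\iota(V)$ is dense open in $Y_V$ and $Y_V \setminus \iota(V)$ has dimension strictly less than $d$, a generic $\prod_i L_i$ avoids the boundary, and it suffices to find $\mathbf a \in V$ with $\iota(\mathbf a) \in \prod_i L_i$. Because $\iota$ is built factorwise from the Hassett--Tschinkel maps $\iota_i: \K^{n_i} \to \P^{n_i}$, this is equivalent to non-emptiness of $V \cap \prod_i \iota_i^{-1}(L_i) \subset \K^{\mathbf n}$. A dimension count gives the expected dimension~$0$: $\dim V + \sum_i \dim \iota_i^{-1}(L_i) = d + \sum_i (n_i - b_i) = \dim \K^{\mathbf n}$. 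The basis inequalities $|\pi_A(\mathbf b)| \leq \rk_P(A) = \dim \pi_A(V)$ valid for every $A \subset E$ ensure that the expected dimension is attained at every stage of the iterated intersection, so $V \cap \prod_i \iota_i^{-1}(L_i)$ is non-empty and $c_{\mathbf b} \geq 1$.

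The main obstacle is making the non-emptiness/non-degeneracy step of the previous paragraph rigorous. The cleanest route is probably an induction on $d = \rk(P)$ using the geometric realization of truncation (\cref{opsgeometry}): for any $i$ with $b_i > 0$, $T_{\{i\}} P$ is realized by $V' := V \cap \pi_i^{-1}(H)$ for a generic hyperplane $H \subset \K^{n_i}$, and $\mathbf b - \mathbf e_i$ is a basis of $T_{\{i\}} P$ (a direct verification from the inequalities $|\pi_A(\mathbf b)| \leq \rk_P(A)$). Relating $Y_V$ and $Y_{V'}$ through the geometric operation corresponding to truncation and invoking the inductive positivity statement for $c_{\mathbf b - \mathbf e_i}(V')$ then yields $c_{\mathbf b}(V) > 0$. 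The base case is $d = 0$, where $Y_V$ is a point and positivity is immediate.
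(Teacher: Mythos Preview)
Your setup is sound: writing $c_{\mathbf b}=\deg\big([Y_V]\cdot\prod_i H_i^{b_i}\big)$ and invoking Kleiman to get $c_{\mathbf b}\ge 0$ is correct, as is the reduction of strict positivity to non-emptiness of $\iota(V)\cap\prod_i L_i$ for generic linear $L_i$'s. The difficulty is that neither of the two routes you sketch for this last step is actually carried out.

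\medskip
\noindent\textbf{The ``iterated intersection'' route.} The claim that ``the basis inequalities $|\pi_A(\mathbf b)|\le \rk_P(A)=\dim\pi_A(V)$ ensure that the expected dimension is attained at every stage'' is not justified and is not evidently true. The preimages $\iota_i^{-1}(L_i)\subset\K^{n_i}$ are \emph{not} linear (for example, the hyperplane $\{b_{i,2}=0\}$ pulls back to $\{\tfrac12 a_{i,1}^2+a_{i,2}=0\}$), so the inequalities---which concern the linear projections $\pi_A|_V$---do not directly control these intersections. Moreover, complementary-dimension subvarieties of $\K^{\mathbf n}$ or of $\P^{\mathbf n}$ need not meet (unlike in a single $\P^n$), so ``expected dimension $0$'' does not by itself give non-emptiness. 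Some substantive argument is needed here, and you have not supplied one.

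\medskip
\noindent\textbf{The truncation route.} This can be made to work, but the crucial comparison ``relating $Y_V$ and $Y_{V'}$'' is exactly the missing piece. One must argue: the closure $D\subset\P^{\mathbf n}$ of $\iota(\pi_i^{-1}(H))$ is an effective divisor pulled back from $\P^{n_i}$, hence $[D]=kH_i$ with $k>0$; the intersection $Y_V\cap D$ is proper for generic $H$; and $Y_{V'}$ is an irreducible component of $Y_V\cap D$ (since $\iota(V')=\iota(V)\cap D\cap O_{\mathbf n}$). Then $k\,c_{\mathbf b}(V)\ge c_{\mathbf b-\mathbf e_i}(V')$, and induction applies. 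None of this is difficult, but none of it appears in your write-up either; ``the geometric operation corresponding to truncation'' is not a proof.

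\medskip
\noindent\textbf{Comparison with the paper.} The paper avoids both routes. It builds a flat family $X\to\A^1$ with $X_1\cong Y_V$ and $X_0\supset\overline V$, the closure of $V$ under the \emph{linear} embedding $I:\K^{\mathbf n}\hookrightarrow\P^{\mathbf n}$; since $[\overline V]=\sum_{\mathbf b}[\P^{\mathbf b}]$ with all coefficients $1$, flatness gives $c_{\mathbf b}\ge 1$. A remark also records a one-line alternative closer in spirit to your first approach: the differential of $\iota$ at $0$ identifies $T_pY_V$ with $V$ itself, so for a basis $\mathbf b$ one may choose $b_i$ coordinates in each factor making the projection $Y_V\dashrightarrow \P^{\mathbf b}$ dominant at $p$, hence $c_{\mathbf b}>0$. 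This tangent-space observation is exactly the missing ingredient that would rescue your non-emptiness claim without any iteration or induction.
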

\begin{proof}
  Let $I: \K^{\mathbf n} \to O_{\mathbf n} \subset \P^{\mathbf{n}}$ be the usual map from $\prod_{i=1}^N \K^{n_i}$ to $\prod_{i=1}^N\P^{n_i}$, given by $\frac{b_{ij}}{b_{i0}}(I(\mathbf a)) = a_{ij}$.
  Write $\overline V$ for the closure of $I(V)$ in $\P^{\mathbf n}$. The Chow class of $\overline V$ is (see e.g. \cite{CFCYL20}) given by
  \[
    [\overline V] = \sum_{\text{$\mathbf b$ basis of $P$}} [\P^{\mathbf b}].
  \]
  Therefore if we can find a flat family $X_t$ of subvarieties of $\P^{\mathbf{n}}$ such that $X_1$ is isomorphic to $Y_V$ and $X_0 \supset \overline V$, then $c_{\mathbf b} \geq 1$ for all bases $\mathbf b$ of $P$.   Recall from \cref{polymatschubertdfn} that $Y_V$ is the closure of $\iota(V)$, where $\iota: \K^{\mathbf n} \to O_{\mathbf n}$. The idea is that $I(V)$ is the tangent space to $\iota(V)$ at the origin of $O_{\mathbf{n}}$, so we can construct a deformation of $Y_V$ by taking the closure of the deformation of $\iota(V)$ to its tangent space.

  We now construct such a family. Allow $\G_m$ to act on $\K^{\mathbf n}$ and $\P^{\mathbf n}$ in the standard way, given in the $i$th factor by
  \begin{align*}
    t \cdot (a_{i,1},\ldots, a_{i,n_i}) &= (t a_{i,1}, \ldots, t a_{i,n_i}) \quad \text{and} \\
    t \cdot [b_{i,0}: b_{i,1}: \cdots : b_{i,n_i}] &= [b_{i,0}: t b_{i,1} : \cdots : t b_{i,n_i}],
  \end{align*}
  and define
  \[
    \iota_\circ': \K^{\mathbf n} \times \Gm \to O_{\mathbf n} \times \A^1, \quad (\mathbf a, t) \mapsto t^{-1} \cdot \iota(t \cdot \mathbf a).
  \]
  From \cref{sec_HT}, we see that
\revision{  \[
    \frac{b_{i\ell}}{b_{i0}}(\iota'_\circ(\mathbf a, t)) = \sum_{j_1+2j_2 + \cdots + \ell j_\ell = \ell} t^{j_1+j_2+\cdots+j_\ell - 1} \frac{a_{i,1}^{j_1} a_{i,2}^{j_2} \cdots a_{i,\ell}^{j_\ell}}{j_1! j_2! \cdots j_\ell!}, \quad 1 \leq \ell \leq n_i,
  \]}
  so $\iota_\circ'$ extends to a regular map
  \[
    \iota': \K^{\mathbf n} \times \A^1 \to O_{\mathbf n} \times \A^1
  \]
  satisfying
  \[
    \frac{b_{i\ell}}{b_{i0}}(\iota'(\mathbf a, 0)) = a_{i\ell}.
  \]
  Let $X$ be the closure of $\iota'(V \times \A^1)$ in $\P^{\mathbf n} \times \A^1$. \revision{Equivalently, $X$ is the closure of $\iota'_\circ(V \times \G_m)$, so $X$ is a flat family over $\A^1$ (e.g. by \cite[Proposition 4.3.9]{liu}).}

  Observe that $X_0 \supset \overline V$.
  On the other hand, there is a diagram
  \[\begin{tikzcd}
      V \times \Gm \ar[r,hook] \ar[d,"\cong"] & \K^{\mathbf n} \times \Gm \ar[r, "\iota' \times \Id"]\ar[d,"\cong"] & \P^{\mathbf n} \times \Gm \ar[d, "\cong"] \\
      V \times \Gm \ar[r,hook] & \K^{\mathbf n} \times \Gm \ar[r, "\iota \times \Id"] & \P^{\mathbf n} \times \Gm \\
    \end{tikzcd} \quad\text{given fiberwise by}\quad
  \begin{tikzcd}
    V \ar[r, hook]\ar[d, "t \cdot"] & \K^{\mathbf n}_t \ar[r, "\iota'"] \ar[d,"t\cdot"] & \P^{\mathbf n}_t \ar[d, "t\cdot"]\\
    V \ar[r, hook] & \K^{\mathbf n}_t \ar[r, "\iota"] & \P^{\mathbf n}_t.
  \end{tikzcd}\]
Taking closures, we obtain an isomorphism of families
\[\begin{tikzcd}
    X \ar[r,hook] \ar[d,"\cong"] & \P^{\mathbf n} \times \Gm \ar[d,"\cong"] \\
    Y_V \times \Gm \ar[r,hook] & \P^{\mathbf n} \times \Gm,
    \end{tikzcd}
  \]
  that identifies $X_1$ with $Y_V$.
\end{proof}
\revision{
\begin{rmk}
  Another proof of \cref{chowclass:positivity} goes as follows: a projection of $O_{\mathbf n} \cap Y_V$ onto a given $d$-element subset of the coordinates $\{b_{ij} / b_{i0}\}_{i,j}$ is dominant if and only if its induced map on tangent spaces is surjective at a general point of $O_{\mathbf n} \cap Y_V$. Consequently, to prove that $c_{\mathbf b} > 0$, it suffices to produce a point $p \in O_{\mathbf n} \cap Y_V$ and a $d$-element set of coordinates representing $\mathbf b$ such that the derivative of the corresponding projection is surjective at $p$.

  The derivative of $\iota$ at $0 \in V$ identifies $V$ with the tangent space of $Y_V$ at the point $p = [1:0:0:\cdots:0] \times \cdots \times [1:0:0:\cdots:0]$.
  Therefore, if $\mathbf b$ is a basis of the polymatroid $P$ associated to $V$, then a $d$-element set of the type desired exists, proving that $c_{\mathbf b} > 0$.
\end{rmk}}

\subsection{}\label{pgprops}
The Schubert variety $Y_V$ is $V$-equivariant, hence is a union of $V$-orbits.
To guarantee that the number of orbits is finite, we require
$V$ to be \word{polymatroid general} (henceforth, \word{p.g.}), meaning that for each multiset $\mathbf s \leq \mathbf n$,
\[
  \rk_{P, \mathbf n}(\mathbf s) = \codim_V V \cap \St_{\mathbf s}.
\]
If the matroid of $V$ in $(\K^1)^{|\mathbf n|} \cong \K^{\mathbf n}$ is $\widetilde P$, then $V$ is polymatroid general, so any realizable polymatroid has a p.g. realization by \cref{realizablelift}.
However, not all p.g. realizations of $P$ realize $\widetilde P$.

The next two results allow us to make inductive arguments; we will use them often without mention.
\begin{prop}
  If $V$ is p.g. and $i \in E$, then $\pi_{E \setminus i}(V)$ is p.g.
\end{prop}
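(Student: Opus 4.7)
The plan is to unwind the definition of polymatroid generality on $V' := \pi_{E \setminus i}(V)$ directly to the corresponding statement for $V$. Let $\mathbf n' = \pi_{E \setminus i}(\mathbf n)$ and $P' = P \setminus i$ (so $V'$ realizes $P'$). Given a multiset $\mathbf s' \leq \mathbf n'$, extend it to a multiset $\hat{\mathbf s} \leq \mathbf n$ by setting $\hat s_i = 0$ and $\hat s_j = s'_j$ for $j \neq i$. The key observation is that $\pi := \pi_{E \setminus i} : \K^{\mathbf n} \to \K^{\mathbf n'}$ sends the coordinate subspace $\St_{\hat{\mathbf s}}$ onto $\St_{\mathbf s'}$ with kernel equal to the $i$-factor $\K^{n_i} = \ker \pi$; equivalently, $\pi^{-1}(\St_{\mathbf s'}) = \St_{\hat{\mathbf s}}$.

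From this, applying the rank-nullity theorem to the restriction $\pi|_V : V \twoheadrightarrow V'$ yields
\[
  \dim(V' \cap \St_{\mathbf s'}) = \dim(V \cap \St_{\hat{\mathbf s}}) - \dim(V \cap \ker \pi), \qquad \dim V' = \dim V - \dim(V \cap \ker \pi),
\]
so the contributions of $V \cap \ker \pi$ cancel and
\[
  \codim_{V'}(V' \cap \St_{\mathbf s'}) = \codim_V(V \cap \St_{\hat{\mathbf s}}).
\]
Since $V$ is polymatroid general, the right-hand side equals $\rk_{P, \mathbf n}(\hat{\mathbf s})$.

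It remains to identify $\rk_{P, \mathbf n}(\hat{\mathbf s}) = \rk_{P', \mathbf n'}(\mathbf s')$. By \cref{indeprank}, $\rk_{P, \mathbf n}(\hat{\mathbf s})$ is the cardinality of any basis of $\hat{\mathbf s}$, i.e.\ any maximal independent multiset of $P$ contained in $\hat{\mathbf s}$. Because $\hat s_i = 0$, every such basis has vanishing $i$-coordinate, and independent multisets of $P$ supported away from $i$ are precisely the independent multisets of $P' = P \setminus i$ (equivalently, apply \cref{deletioncommutes} to match the multisymmetric lifts). Hence $\rk_{P, \mathbf n}(\hat{\mathbf s}) = \rk_{P', \mathbf n'}(\mathbf s')$, which closes the argument. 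There is no real obstacle here; the only subtlety is keeping track of how the projection $\pi$ interacts with the coordinate subspaces $\St_{\mathbf s}$, and this is immediate from the explicit description of $\St_{\mathbf s}$ in \cref{polymatschubertdfn}.
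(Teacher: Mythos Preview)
Your proof is correct and follows essentially the same approach as the paper's: you extend $\mathbf s'$ by zero to $\hat{\mathbf s}$ (the paper's $\mathbf s$), use the preimage identity $\pi^{-1}(\St_{\mathbf s'}) = \St_{\hat{\mathbf s}}$ to match codimensions (the paper phrases this as a commuting diagram of short exact sequences), and then identify ranks. The only cosmetic difference is that you compare $\rk_{P,\mathbf n}(\hat{\mathbf s})$ with $\rk_{P',\mathbf n'}(\mathbf s')$ via independent multisets and \cref{indeprank}, whereas the paper does it by noting that $\hat{\mathbf s}$ and $\mathbf s'$ are represented by the same subset $S \subset \widetilde E \setminus E_i$ and invoking \cref{deletioncommutes}.
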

\begin{proof}
  Let $\mathbf s' \leq \pi_{E \setminus i}(\mathbf n)$ be a multiset and let
  $\mathbf s = \sum_{j \in E \setminus i} s_j' \mathbf e_j$.
   There is a commuting diagram with exact rows
  \[\begin{tikzcd}
      0 \ar[r] & V \cap \ker(\pi_{E \setminus i}) \ar[r]\ar[d,equal] & V \cap \St_{\mathbf s} \ar[r] \ar[d] & V' \cap \St_{\mathbf s'} \ar[r]\ar[d] &  0 \\
      0 \ar[r] & V \cap \ker(\pi_{E \setminus i}) \ar[r] & V \ar[r, "\pi_{E \setminus i}"] & V' \ar[r] & 0,
  \end{tikzcd}\]
so $\codim_{V'} V' \cap \St_{\mathbf s'} = \codim_V V \cap \St_{\mathbf s} = \rk_P(\mathbf s)$.
The multisets $\mathbf s$ and $\mathbf s'$ are represented by the same set $S \subset \widetilde E \setminus E_i$, so
\[ \codim_{V'} V' \cap \St_{\mathbf s'} = \rk_{P,\mathbf n}(\mathbf s) =\rk_{\widetilde P}(S) = \rk_{\widetilde P \setminus E_i}(S) = \rk_{\widetilde{P \setminus i}}(S) = \rk_{P\setminus i, \pi_{E \setminus i}(\mathbf n)}(\mathbf s'), \]
which is the desired conclusion.
\end{proof}
\begin{prop}\label{pgtruncation}
  Suppose $V \subset \K^{\mathbf n}$ is a p.g. subspace and $A \subset E$.
  If $H$ is the preimage in $\K^{\mathbf n}$ of a general\footnote{By which we mean a hyperplane such that $H \cap V$ realizes $T_A P$. See also \cref{opsgeometry}.}
  hyperplane in $\K^{\pi_A(\mathbf n)}$, then $V \cap H$ is p.g.
\end{prop}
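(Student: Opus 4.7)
The plan is to compute, for each multiset $\mathbf{s} \leq \mathbf{n}$, both sides of the polymatroid generality equation for $V' := V \cap H$ and the polymatroid $P' := T_A P$ it realizes (by \cref{opsgeometry}), then check that they agree. Set $\mathbf{n}_A := \sum_{i \in A} n_i \mathbf{e}_i$, and let $\mathbf{t}$ denote the entrywise maximum of $\mathbf{s}$ and $\mathbf{n}_A$ (not the lattice join of \cref{posetprops}, which includes a closure). Then $\ker \pi_A = \St_{\mathbf{n}_A}$ and $\St_{\mathbf{s}} \cap \ker \pi_A = \St_{\mathbf{t}}$.

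The main geometric observation will be that for $\bar H$ sufficiently general, a subspace $W \subset \K^{\mathbf{n}}$ is contained in $H := \pi_A^{-1}(\bar H)$ if and only if $\pi_A(W) = 0$. The implication $\Leftarrow$ is automatic, while for $\pi_A(W) \neq 0$, requiring $\bar H \supset \pi_A(W)$ cuts out a proper closed subset of the Grassmannian of hyperplanes. Since the collection $\{V \cap \St_{\mathbf{s}} : \mathbf{s} \leq \mathbf{n}\}$ is finite, a general $\bar H$ simultaneously avoids all these proper conditions. Combined with polymatroid generality of $V$, this implies that $V \cap \St_{\mathbf{s}} \subset H$ iff $V \cap \St_{\mathbf{s}} \subset \St_{\mathbf{t}}$, iff $\dim(V \cap \St_{\mathbf{s}}) = \dim(V \cap \St_{\mathbf{t}})$, iff $\rk_P(\mathbf{s}) = \rk_P(\mathbf{t})$.

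Together with the dimension count $\dim V' = \dim V - 1$ (which holds because the only meaningful case for the statement is $\rk_P(A) \geq 1$), this yields
\[
  \codim_{V'}(V' \cap \St_{\mathbf{s}}) = \begin{cases} \rk_P(\mathbf{s}) - 1, & \text{if $\rk_P(\mathbf{s}) = \rk_P(\mathbf{t})$,} \\ \rk_P(\mathbf{s}), & \text{otherwise.} \end{cases}
\]
To finish, I would match this with $\rk_{P', \mathbf{n}}(\mathbf{s})$: by \cref{truncationcommutes}, $\widetilde{P'} = T_{\cup_{i \in A} E_i}\widetilde{P}$, and applying the matroid truncation formula to any $S \subset \widetilde{E}$ representing $\mathbf{s}$ (so that $S \cup \cup_{i \in A} E_i$ represents $\mathbf{t}$) produces exactly the same piecewise expression.

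The only real obstacle is bookkeeping: identifying $\ker \pi_A$ with the stabilizer $\St_{\mathbf{n}_A}$, carefully distinguishing the entrywise maximum $\mathbf{t}$ from the lattice-theoretic join (which involves closure), and pinning down the genericity conditions on $\bar H$ so that the containment criterion $V \cap \St_{\mathbf{s}} \subset H \Leftrightarrow \pi_A(V \cap \St_{\mathbf{s}}) = 0$ holds uniformly across the finitely many multisets $\mathbf{s}$.
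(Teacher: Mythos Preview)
Your proof is correct and follows essentially the same route as the paper's. Both arguments compute each side of the polymatroid-generality equation as a two-case formula and then match the case conditions; your criterion $\rk_P(\mathbf s) = \rk_P(\mathbf t)$ is equivalent to the paper's $A \subset \{i : \overline s_i = n_i\}$ (via \cref{combrankformula}), and your treatment of the genericity of $H$ is slightly more explicit than the paper's one-line ``observe'' at the end.
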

\begin{proof}
  Let $\mathbf s \leq \mathbf n$ be a multiset represented by $S \subset \widetilde E$.
  From \cref{sec:truncation}, we learn
  \[
    \rk_{T_A P, \mathbf n}(\mathbf s) = \rk_{\widetilde{T_A P}}(S)
    =  \begin{cases} \rk_{\widetilde P}(S) - 1, & \text{if $\cup_{i \in A} E_i \subset \overline S$} \\
      \rk_{\widetilde P}(S), & \text{otherwise}
                               \end{cases}
                               = \begin{cases}
                                 \rk_{P, \mathbf n}(\mathbf s) - 1, & \text{if $A \subset \{i : \overline s_i = n_i\}$} \\
                                 \rk_{P,\mathbf n}(\mathbf s), & \text{otherwise.}
                                 \end{cases}
  \]
  On the other hand,
  \[
    \codim_{V \cap H} V \cap H \cap \St_{\mathbf s}
    = \begin{cases}
      \codim_V V \cap \St_{\mathbf s} - 1, & \text{if $V \cap \St_{\mathbf s} \subset V \cap H$} \\
      \codim_V V \cap \St_{\mathbf s}, & \text{otherwise}
    \end{cases}
    = \begin{cases}
      \rk_{P, \mathbf n}(\mathbf s) - 1, & \text{if $V \cap \St_{\mathbf s} \subset V \cap H$} \\
      \rk_{P, \mathbf n}(\mathbf s), & \text{otherwise.}
     \end{cases}                                  
   \]
   To finish, observe that $A \subset \{i : \overline s_i = n_i\}$ if and only if  $V \cap \St_{\mathbf s} \subset V \cap H$ for a Zariski-general $H$.
\end{proof}

\subsection{}\label{geomproofs}
We establish \cref{main:stratification} via a series of lemmas characterizing the strata of $Y_V$.
Throughout this section, $V$ is assumed to be polymatroid general.
\begin{lem}\label{avoidance}
  If $\mathbf s$ is not a combinatorial flat of $V$, then $Y_V \cap O_{\mathbf s}$ is empty.
\end{lem}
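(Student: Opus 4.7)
The plan is to prove the contrapositive by induction on $N = |E|$: assuming $\mathbf s$ is not a combinatorial flat, I will show $Y_V \cap O_{\mathbf s} = \emptyset$. By \cref{combrankformula}, each coordinate of the closure satisfies $\overline s_i \in \{s_i, n_i\}$, and the hypothesis forces $\mathbf s < \overline{\mathbf s}$. I split on whether $\overline{\mathbf s} < \mathbf n$ (handled by reduction) or $\overline{\mathbf s} = \mathbf n$ (handled by a direct contradiction). A priori this dichotomy looks ad hoc, but it is precisely what separates the two mechanisms available.

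In the reduction case, pick $i \in E$ with $\overline s_i < n_i$; then $\overline s_i = s_i < n_i$, so by \cref{localproduct} the multiset $\mathbf s' := \pi_{E \setminus i}(\mathbf s)$ is not a combinatorial flat of $(P \setminus i, \pi_{E \setminus i}(\mathbf n))$. The coordinate projection $\pi_{E \setminus i}: \P^{\mathbf n} \to \P^{\pi_{E \setminus i}(\mathbf n)}$ commutes with $\iota$ by the product structure $\iota = \prod_j \iota_{n_j}$, and therefore maps $Y_V$ onto $Y_{V'}$ (where $V' := \pi_{E \setminus i}(V)$ is still polymatroid general by the first proposition of \cref{pgprops}) and $O_{\mathbf s}$ onto $O_{\mathbf s'}$. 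The induction hypothesis gives $Y_{V'} \cap O_{\mathbf s'} = \emptyset$, hence $Y_V \cap O_{\mathbf s} = \emptyset$.

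In the direct case $\overline{\mathbf s} = \mathbf n$, so $\rk_P(\mathbf s) = \rk_P(\mathbf n) = d$. Suppose for contradiction that $p \in Y_V \cap O_{\mathbf s}$. Polymatroid generality gives $\dim(V \cdot p) = \codim_V(V \cap \St_{\mathbf s}) = \rk_P(\mathbf s) = d$. Since $V \cdot p$ is an irreducible $d$-dimensional subset of the irreducible $d$-dimensional variety $Y_V$, we must have $Y_V = \overline{V \cdot p} \subset \overline{O_{\mathbf s}} = \P^{\mathbf s}$. But $\mathbf s \neq \mathbf n$ (as $\mathbf n$ is always a combinatorial flat while $\mathbf s$ is not), so some $s_i < n_i$; consequently $\P^{\mathbf s}$ is contained in $\{b_{i,0} = 0\}$ and cannot meet $O_{\mathbf n}$, contradicting $\iota(V) \subset Y_V \cap O_{\mathbf n}$.

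The main conceptual hurdle is recognizing that the condition $\overline{\mathbf s} = \mathbf n$ is exactly what pins down $\rk_P(\mathbf s) = d$, which in turn is what lets the irreducibility argument close; once this is seen, everything else is bookkeeping. The only technical check that requires any care is the compatibility $\pi_{E \setminus i}(Y_V) = Y_{V'}$ used in the reduction case, and this reduces immediately to the factorwise definition of $\iota$.
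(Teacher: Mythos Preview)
Your proof is correct and follows the same strategy as the paper: split on whether $\rk_P(\mathbf s) = d$ (equivalently $\overline{\mathbf s} = \mathbf n$), handle that case by the orbit-dimension/irreducibility argument, and reduce the other case via projection onto $E \setminus i$ using \cref{localproduct} and induction on $N$. The only cosmetic difference is that the paper states $N=1$ as an explicit base case, whereas in your organization the base case is absorbed into the direct case (since for $N=1$ a non-flat $\mathbf s$ necessarily has $\overline{\mathbf s} = \mathbf n$).
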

\begin{proof}
  We induct on $N$.
  For any point $x \in O_{\mathbf s}$,
  \begin{equation}
    \label{dimformula}\tag{$*$}
    \dim(V \cdot x) = \codim_V(V \cap \St_{\mathbf s}) = \rk_P(\mathbf s).
  \end{equation}
  Since $Y_V$ is a union of $V$-orbits and $V$ is p.g., the desired statement follows immediately from dimensional considerations when $N=1$.

  Otherwise, suppose $N > 1$.
  If $\rk(\mathbf s) = d$, then the dimension formula implies $Y_V \cap O_{\mathbf s}$ is nonempty if and only if $\mathbf s = \mathbf n$.
  Otherwise, if $\rk \mathbf s < d$, then there is $i \in E$ such that  $\overline{s}_i < n_i$.
  The projection $Y_V \to Y_{\pi_{E \setminus i}(V)}$ sends $Y_V \cap O_{\mathbf s}$ into $Y_{\pi_{E \setminus i}(V)} \cap O_{\pi_{E \setminus i}(\mathbf s)}$.
  By \cref{localproduct}, $\pi_{E \setminus i}(\mathbf s)$ is not a combinatorial flat of $\pi_{E \setminus i}(V)$, so  $Y_{\pi_{E \setminus i}(V)} \cap O_{\pi_{E \setminus i}(\mathbf s)}$ is empty by induction on $N$. This means $Y_V \cap O_{\mathbf s}$ is empty too.
\end{proof}

The following result gives a sharp description of the geometry of a nonempty stratum.
\begin{lem}\label{strata}
  If $\mathbf s$ is a combinatorial flat of $V$ such that $Y_V \cap O_{\mathbf s}$ is nonempty and $V' = \pi_{\{i : s_i = n_i\}}(V)$, then
  $ Y_V \cap O_{\mathbf s} = \iota(V') \times \prod_{s_i < n_i} O_{s_i}$.
  Consequently, $Y_V \cap O_{\mathbf s}$ is a single $V$-orbit of dimension $\rk(\mathbf s)$, and
  \[ \textstyle \overline{Y_V \cap O_{\mathbf s}}  = Y_{V'} \times \prod_{s_i < n_i} \P^{s_i} = Y_{V' \times \prod_{s_i < n_i} \K^{s_i}} \subset \P^{\mathbf s}. \]
\end{lem}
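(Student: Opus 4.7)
Write $T = \{i : s_i = n_i\}$, so that $V' = \pi_T(V)$, $\mathbf s^\geo = \sum_{i \in T} n_i \mathbf e_i$, and set $U := \iota(V') \times \prod_{i \notin T} O_{s_i}$. The plan is to establish $Y_V \cap O_{\mathbf s} = U$ by a two-sided containment, after which the single-orbit statement and the closure formulas follow formally.

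For $Y_V \cap O_{\mathbf s} \subset U$: the map $\iota$ is built factor-by-factor from the Hassett--Tschinkel maps $\rho_{n_i}$, hence intertwines the projection $\pi_T : \P^{\mathbf n} \to \P^{\pi_T(\mathbf n)}$ with its analog $\iota_T : \K^{\pi_T(\mathbf n)} \to O_{\pi_T(\mathbf n)}$; that is, $\pi_T \circ \iota = \iota_T \circ \pi_T$. Properness of $\pi_T$ gives $\pi_T(Y_V) = \overline{\iota_T(V')} = Y_{V'}$, and since $\iota_T$ is an isomorphism onto the open orbit with $V'$ closed in $\K^{\pi_T(\mathbf n)}$, one has $Y_{V'} \cap O_{\pi_T(\mathbf n)} = \iota_T(V') = \iota(V')$. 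Combining these yields $\pi_T(Y_V \cap O_{\mathbf s}) \subset \iota(V')$, and together with $Y_V \cap O_{\mathbf s} \subset O_{\mathbf s}$ this gives the containment.

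For the reverse containment $U \subset Y_V \cap O_{\mathbf s}$, I use polymatroid generality together with a dimension count. By the formula $(*)$ in the proof of \cref{avoidance}, every $V$-orbit in $O_{\mathbf s}$ has dimension $\rk_P(\mathbf s)$; since $Y_V \cap O_{\mathbf s}$ is $V$-invariant and nonempty, $\dim(Y_V \cap O_{\mathbf s}) \geq \rk_P(\mathbf s)$. On the other hand, $U$ is a product of the affine space $\iota(V') \cong V'$ with the affine spaces $O_{s_i}$, hence irreducible of dimension
\[
\dim V' + \sum_{i \notin T} s_i = \rk_P(T) + |\mathbf s - \mathbf s^\geo| = \rk_P(\mathbf s^\geo) + |\mathbf s - \mathbf s^\geo| = \rk_P(\mathbf s),
\]
by \cref{combrankformula}. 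A closed subset of an irreducible variety whose dimension equals that of the variety must be the whole variety, so $Y_V \cap O_{\mathbf s} = U$.

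The consequences now follow quickly. Each $V$-orbit in $U$ has dimension $\rk_P(\mathbf s) = \dim U$ and is therefore open in the irreducible variety $U$; since an irreducible space cannot be partitioned into more than one nonempty open subset, $Y_V \cap O_{\mathbf s}$ is a single $V$-orbit. Finally, closure commutes with finite products, so $\overline{U} = Y_{V'} \times \prod_{i \notin T} \P^{s_i}$, and applying the factor-by-factor structure of $\iota$ to the subspace $V' \times \prod_{i \notin T} \K^{s_i} \subset \K^{\mathbf s}$ identifies this with $Y_{V' \times \prod_{i \notin T} \K^{s_i}}$. The delicate step---the one that truly uses polymatroid generality---is the dimension lower bound in the reverse containment: without the exact dimension formula for $V$-orbits in $O_{\mathbf s}$, there would be no reason to expect $Y_V \cap O_{\mathbf s}$ to fill out all of $U$ rather than a proper closed subvariety.
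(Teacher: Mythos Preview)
Your proof is correct and in fact slightly cleaner than the paper's. Both arguments establish the containment $Y_V \cap O_{\mathbf s} \subset U$ by projection and then upgrade to equality via the dimension bound $\dim(Y_V \cap O_{\mathbf s}) \geq \rk_P(\mathbf s)$ coming from polymatroid generality. The difference lies in how the containment is obtained. The paper inducts on $N$: it chooses a single index $j$ with $s_j < n_j$, projects away the $j$th factor, and invokes the induction hypothesis on $Y_{\pi_{E\setminus j}(V)} \cap O_{\pi_{E\setminus j}(\mathbf s)}$, with a separate base case $N=1$ handled via \cref{avoidance} and a divisor argument. You instead project onto $T = \{i : s_i = n_i\}$ in one step and use the elementary observation that $\iota_T(V')$ is already closed in the open orbit $O_{\pi_T(\mathbf n)}$ (since $\iota_T$ is an isomorphism onto that orbit and $V'$ is a closed linear subspace), so $Y_{V'} \cap O_{\pi_T(\mathbf n)} = \iota_T(V')$ immediately. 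This avoids both the induction and the base case. The single-orbit and closure statements are then derived identically in both proofs.
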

\begin{proof}
  The statement plainly holds when $\mathbf s = \mathbf n$.
  Otherwise, assume $\mathbf s$ is a proper combinatorial flat.
  The $V$-orbit of any point in $O_{\mathbf s}$ has dimension $\rk(\mathbf s)$ and $Y_V$ is $V$-equivariant, so $Y_V \cap O_{\mathbf s}$ has dimension at least $\rk(\mathbf s)$.
  
  We complete the proof for proper combinatorial flats by inducting on $N$.
  First consider $N = 1$, so that  $Y_V \subset \P^{n_1}$.
  By \cref{avoidance}, $Y_V \cap \P^{n_1 - 1} \subset \P^{\dim V - 1}$.
  Since $Y_V \cap \P^{n_1 - 1}$ is a divisor on $Y_V$, we must have $Y_V \cap \P^{n_1 - 1} = \P^{\dim V - 1}$.
  Since $V$ is polymatroid general, $O_i \subset \P^{\dim V - 1}$ is a single $V$-orbit, and $\overline{Y_V \cap O_i} = \P^{i}$ as desired.

  Now, suppose $N > 1$.
  Since $\mathbf s$ is a proper combinatorial flat, there is $j \in E$ such that $s_j < n_j$.
  Set $V' = \pi_{E \setminus j}(V)$ and $\mathbf s' = \pi_{E \setminus j}(\mathbf s)$.
  The projection $\pi_{E \setminus j}: \P^{\mathbf n} \to \P^{\pi_{E \setminus i}(\mathbf n)}$ induces a surjection $Y_V \to Y_{V'}$. This map sends $Y_V \cap O_{\mathbf s}$ into $Y_{V'} \cap O_{\mathbf s'}$, so by the induction hypothesis
  \[ \textstyle Y_V \cap O_{\mathbf s} \subset O_{s_j} \times (Y_{V'} \cap O_{\mathbf s'})
    = O_{s_j} \times \iota(\pi_{\cup_{s_i' = n_i} E_i}( V)) \times \prod_{s_i' < n_i} O_{s_i}
    = \iota(\pi_{\cup_{s_i = n_i} E_i}( V)) \times \prod_{s_i < n_i} O_{s_i}.
  \]
  The dimension of the left-hand set is at least $\rk(\mathbf s)$ because $V$ is polymatroid general; the right-hand set is connected of dimension  $\rk(\mathbf s)$ by \cref{combrankformula}.
  Hence, the two sides are equal.

  To see that $Y_V \cap O_{\mathbf s}$ is a single $V$-orbit, recall that $V$ is polymatroid general, meaning that the $V$-orbit of any point in $O_{\mathbf s}$ has dimension $\rk(\mathbf s)$, and we have just proved that $Y_V \cap O_{\mathbf s}$ is connected of dimension $\rk(\mathbf s)$.
  The remainder of the ``consequently'' is clear.
\end{proof}
\begin{lem}\label{inclusion}
If $\mathbf s$ is a combinatorial flat, then $Y_V \cap O_{\mathbf s}$ is nonempty.  
\end{lem}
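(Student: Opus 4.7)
My plan is to use induction on $(d, N) = (\rk P, |E|)$ ordered lexicographically, with an inner downward induction on $\rk_P(\mathbf s)$. The base cases are immediate: $d = 0$ (where $\mathbf n = \mathbf 0$) and $N = 1$ (essentially proved during \cref{strata} by showing $Y_V \supset \P^{d-1}$). Within each $(d, N)$, the top combinatorial flat $\mathbf n$ is handled by $\iota(\mathbf 0) \in Y_V \cap O_{\mathbf n}$.

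For a combinatorial flat $\mathbf s \neq \mathbf n$ with $\rk_P(\mathbf s) < d - 1$, I would pick $i$ with $s_i < n_i$ and consider $\mathbf s' := \overline{\mathbf s + \mathbf e_i}$, a combinatorial flat of rank $\rk_P(\mathbf s) + 1 < d$, hence with $\mathbf s' \neq \mathbf n$. By the inner inductive hypothesis, $Y_V \cap O_{\mathbf s'}$ is nonempty, and \cref{strata} identifies its closure as $Y_{V'} \times \prod_{j \notin A'} \P^{s'_j}$ for $V' = \pi_{A'}(V)$ and $A' := \{j : s'_j = n_j\} \subsetneq E$. Checking factor-wise that this closure meets $O_{\mathbf s}$ reduces (after noting $O_{s_j} \subset \P^{s'_j}$ for $j \notin A'$) to the statement $Y_{V'} \cap O_{\pi_{A'}(\mathbf s)} \neq \emptyset$, where $\pi_{A'}(\mathbf s)$ is a combinatorial flat of $V'$ by iterated application of \cref{localproduct}. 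Since $|A'| < N$, the outer inductive hypothesis applies.

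The delicate case is $\rk_P(\mathbf s) = d - 1$, because the only combinatorial flat strictly above $\mathbf s$ is $\mathbf n$, making the above reduction circular. For this case I would use truncation at the flat $A_0 := \{j : s_j = n_j\}$ of $P$ (it is a flat since $\mathbf s^\geo$ corresponds to $\cup_{j \in A_0} E_j$, a flat of $\widetilde P$). Provided $A_0 \neq \emptyset$, \cref{pgtruncation} guarantees that for a generic hyperplane $H$, the subspace $V \cap H$ is polymatroid general and realizes $T_{A_0} P$, a polymatroid of rank $d - 1 < d$; moreover, \cref{truncation} shows that $\mathbf s$ remains a combinatorial flat of $T_{A_0} P$ (of rank $d - 2$). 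The outer inductive hypothesis then provides $Y_{V \cap H} \cap O_{\mathbf s} \neq \emptyset$, and the inclusion $Y_{V \cap H} \subseteq Y_V$ finishes the argument.

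The main obstacle I anticipate is the sub-case $A_0 = \emptyset$ of the exceptional case, where truncation at the empty flat is degenerate. Here $\mathbf s$ is an independent multiset of cardinality $d - 1$ with $\overline{\mathbf s + \mathbf e_j} = \mathbf n$ for every $j$ such that $s_j<n_j$, and I expect it will require either a carefully chosen parametric family $\gamma(t) \subset V$ whose Hassett-Tschinkel image $\iota(\gamma(t))$ limits into $O_{\mathbf s}$, or a direct verification via excess-intersection methods that $Y_V$ contains the coordinate subspace $\P^{\mathbf s}$.
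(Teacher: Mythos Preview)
Your reductions for $\rk_P(\mathbf s) < d-1$ and for $\rk_P(\mathbf s) = d-1$ with $A_0 \neq \emptyset$ are correct. (The paper handles corank $> 1$ more economically, via a single truncation at $E$ rather than your descent through $\mathbf s' = \overline{\mathbf s + \mathbf e_i}$ and \cref{strata}; but your route is valid, and indeed $\dim \pi_{A'}(V) = \rk_P(A') \leq \rk_P(\mathbf s') < d$, so even plain induction on $d$ suffices.) The genuine gap is exactly the case you flag: $\rk_P(\mathbf s) = d-1$ with $A_0 = \emptyset$. This is the heart of the lemma, and neither truncation nor projection helps, since the only cover of $\mathbf s$ is $\mathbf n$ itself. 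Your two proposed strategies are not proofs; in particular, a one-parameter limit argument is hard to make work uniformly because the limiting behavior of $\iota(\gamma(t))$ depends delicately on the nonlinear Bell-polynomial coordinates.

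The paper closes this gap with an intersection-theoretic argument using \cref{chowbases} and \cref{chowclass:positivity}. Since $\mathbf s^\geo = \mathbf 0$ and $|\mathbf s| = d-1$, \cref{indeprank} shows $\mathbf s$ is independent, so $\mathbf s + \mathbf e_i$ is a basis of $P$ for some $i$. One intersects $Y_V$ with the divisor $\P^{\mathbf n - \mathbf e_i}$: by \cref{avoidance} and \cref{strata}, every $(d-1)$-dimensional irreducible component of $Y_V \cap \P^{\mathbf n - \mathbf e_i}$ is of the form $\overline{Y_V \cap O_{\mathbf s'}}$ for some corank-$1$ combinatorial flat $\mathbf s' \leq \mathbf n - \mathbf e_i$. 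Expanding $[\P^{\mathbf n - \mathbf e_i}] \cdot [Y_V]$ two ways---once as $\sum_{\mathbf s'} m_{\mathbf s'} [\overline{Y_V \cap O_{\mathbf s'}}]$ and once directly from \cref{chowclass:positivity}---and comparing the coefficient of $[\P^{\mathbf s}]$ (which is $c_{\mathbf s + \mathbf e_i} > 0$ on the second side, and can only arise from the summand $\mathbf s' = \mathbf s$ on the first side since $\mathbf s$ is its own unique basis) forces $m_{\mathbf s} > 0$, hence $Y_V \cap O_{\mathbf s} \neq \emptyset$. Your phrase ``excess-intersection methods'' points in the right direction, but note that the decisive input is the strict positivity of \emph{every} basis coefficient $c_{\mathbf b}$, which is itself nontrivial and is proved in \cref{chowclass:positivity} by degenerating $Y_V$ to the ordinary multiprojective closure of $V$.
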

\begin{proof}
  We induct on $\dim V$.
  If $\dim V = 1$, let $\mathbf s$ be its minimal combinatorial flat.
  Since $V$ is just a line, one sees that $O_{\mathbf s} \subset Y_V$ using the explicit description of the Bell polynomials in \cref{sec_HT}.

  Otherwise, suppose $\dim V > 1$.
  For a general hyperplane $H$ in $\K^{\mathbf n}$, $Y_{V \cap H} \cap O_{\mathbf s}$ is nonempty for every combinatorial flat $\mathbf s$ of corank greater than 1 by the induction hypothesis and \cref{truncation}.
  By the same argument, if $F = \{i : s_i = n_i\}$ is nonempty, then  $Y_{V \cap H} \cap O_{\mathbf s}$ is nonempty for $H \subset \K^{\mathbf n}$ the preimage of a general hyperplane in $\K^{\pi_F(\mathbf n)}$.

  We have now reduced to the case where $\mathbf s$ is a combinatorial flat of rank $d-1$ with $\mathbf s^\geo = \mathbf 0$.
  By \cref{indeprank}, this means that $\mathbf s$ is an independent multiset of cardinality $d-1$, so there is $i \in E$ such that $\mathbf s + \mathbf e_i$ is a basis of $P$.
  Let $H = \P^{\mathbf n - \mathbf e_i}$.
  Every irreducible component of $Y_V \cap H$ is $(d-1)$-dimensional, so by \cite[Theorem 1.26]{EH16},
  \[
    [\P^{\mathbf n - \mathbf e_i}] \cdot [Y_V] = \sum_C m_C [C],
  \]
  where $C$ runs over all irreducible components of $Y_V \cap H$.
  By \cref{avoidance}, $Y_V \cap H \subset \cup_{\mathbf s'} Y_V \cap \P^{\mathbf s'}$, where the union runs over all corank 1 combinatorial flats $\mathbf s' \leq \mathbf n - \mathbf e_i$.
  By \cref{strata}, each irreducible component $C$ of $Y_V \cap H$ is among those of $\cup_{\mathbf s'} Y_V \cap \P^{\mathbf s'}$, so
  \[
        [\P^{\mathbf n - \mathbf e_i}] \cdot [Y_V] = \sum_{\mathbf s'} m_{\mathbf s'} [Y_V \cap \P^{\mathbf s'}],
  \]
  with $m_{\mathbf s'} = 0$ if $\dim Y_V \cap \P^{\mathbf s'} < d-1$.
  By \cref{strata} and \cref{chowclass:positivity}, we may further expand as
  \[
        [\P^{\mathbf n - \mathbf e_i}] \cdot [Y_V] = \sum_{\mathbf s'} m_{\mathbf s'} \sum_{\mathbf b' \text{ basis of $\mathbf s'$}} c_{\mathbf s', \mathbf b'} [\P^{\mathbf b'}].
  \]
  
   On the other hand, by \cref{chowclass:positivity},
   \[
      [\P^{\mathbf n - \mathbf e_i}] \cdot [Y_V] = \sum_{\substack{\mathbf b \text{ basis of $P$}\\b_i > 0}} c_{\mathbf b} [\P^{\mathbf b - \mathbf e_i}],
   \]
   with all coefficients positive.
   Since $\mathbf s$ is both an independent multiset and a combinatorial flat of $P$,  $[\P^{\mathbf s}]$ appears in both expansions, and $0 < c_{\mathbf s} = m_{\mathbf s} c_{\mathbf s, \mathbf s}$.
   This implies $m_{\mathbf s} \neq 0$, so $\dim Y_V \cap \P^{\mathbf s} = d-1$, hence $Y_V \cap O_{\mathbf s} \neq \emptyset$.
\end{proof}
\begin{proof}[Proof of \cref{main:stratification}]
  Characterization of when $Y_V$ intersects $O_{\mathbf s}$ is \cref{avoidance} and \cref{inclusion}.
  \cref{main:stratification}\cref{main:stratification:orbit} follows from \cref{strata}, and \cref{main:stratification}\cref{main:stratification:class} is obtained by combining \cref{strata} and \cref{chowclass:positivity}.

  It remains to prove \cref{main:stratification}\cref{main:stratification:poset}, which states that if $\mathbf s, \mathbf s' \in \mathcal L_{P, \mathbf n}$, then  $\overline{Y_V \cap O_{\mathbf s}} \supset Y_V \cap O_{\mathbf s'}$ if and only if $\mathbf s \geq \mathbf s'$.
  
  Fix $\mathbf s \in \mathcal L_{P, \mathbf n}$.
  By \cref{localproduct}, $\mathbf s' < \mathbf s$ is a combinatorial flat of $V$ if and only if $\mathbf s_i' \leq \mathbf s_i$ for all $i$ and $(\mathbf s_j')_{j :s_j = n_j}$ is a combinatorial flat of $V' := \pi_{\{j : s_j = n_j\}}(V)$. 
  Such multisets $\mathbf s'$ are precisely those that index the nonempty strata of $\overline{Y_V \cap O_{\mathbf s}}$, since $\overline{Y_V \cap O_{\mathbf s}} = Y_{V'} \times \prod_{s_i < n_i} \P^{s_i}$ by \cref{strata}.
\cref{strata} also implies  $O_{\mathbf s'} \cap (Y_{V'} \times \prod_{s_i < n_i} \P^{s_i}) = O_{\mathbf s'} \cap Y_V$ whenever both sides are nonempty, which completes the proof.
\end{proof}

\begin{rmk}[Rescaling coordinates]
  Theorems \ref{main:stratification} describes properties of Schubert varieties of subspace arrangements, extending well-known results for Schubert varieties of hyperplane arrangements from \cite{AB16}, which correspond to the case $\mathbf n = (1, \ldots, 1)$.
  One further such property follows: when $\mathbf n = (1, \ldots, 1)$,  it is well-known that changing $V$ by rescaling each coordinate of $\K^{\mathbf n} = \K^N$ does not change the isomorphism class of $Y_V$.
  For general $\mathbf n$, the isomorphism class of $Y_V$ is unchanged if we perform a weighted rescaling of each factor $\K^{n_i}$, as described in \cref{rmk_Gm}.
\end{rmk}
\begin{rmk}[Flags from the group action]\label{flagperspective}
  Assume that $V$ realizes a simple polymatroid.
  The subspaces $V_{ij} = V \cap \St_{j \mathbf e_i}$ for $1 \leq i \leq N$ and $1 \leq j \leq n_i$ comprise a collection of $N$ flags in $V$. The combinatorial flats record intersection pattern of these flags, as $\rk(\mathbf s) = \codim_V V_{1,s_1} \cap V_{2,s_2} \cap \cdots \cap V_{N,s_N}$.
The ``polymatroid general'' condition on $V$ guarantees that these flags are in general position with respect to one another (see also \cref{geometryofcf}).
\end{rmk}

\begin{zb}[Simplification changes the Schubert variety]
    Let $\mathbf{n} = (2,2)$, and consider the subspace $V \subset \K^{\mathbf{n}} = \K^2 \times \K^2$ given by the rowspan of
    \[
    \begin{pmatrix}
        1 & 0 & 1 & 1\\
        0 & 1 & 1 & -1
    \end{pmatrix}.
    \]
    The data $V \subset \K^{\mathbf{n}}$ realizes the nonsimple polymatroid of \cref{zb:simplification}, whose simplification is the boolean matroid on two elements. However, we will see that $Y_V$ is not isomorphic to the Schubert variety $(\P^1)^2$ of the simplification. Consider the linear paramitrization $(u,v) \mapsto (u,v,u+v,u-v)$ of $V$, and the corresponding paramatrization of $\iota(V) \subset \P^2 \times \P^2$:
    \[
    (u,v) \mapsto [1:u:v + \frac12 u^2] \times [1:u+v: u-v + \frac12(u+v)^2].
    \]
    Recall that we label the coordinates on the right hand side as $[b_{10}:b_{11}:b_{12}] \times [b_{20}:b_{21}:b_{22}]$. The relations between the coordinates $b_{11},b_{12},b_{21},b_{22}$ on the right hand side are generated by the inhomogenious equations
    \begin{align*}
        &b_{21}^2 + 4b_{11} - 2b_{21} - 2b_{22},\\
        &b_{11}^2 - 2b_{11} - 2b_{12} + 2b_{21}.
    \end{align*}
    To take the closure in $\P^{\mathbf{n}}$, we homogenize the above equations in the variables $b_{10},b_{20}$ so that they are bihomogeneous, and then saturate them with respect to the ideal generated by $b_{10}b_{20}$ to obtain the homogenious equations
    \begin{align*}
        &160\,b_{11}b_{20}^{2}-32\,b_{12}b_{20}^{2}-96\,b_{10}b_{20}b_{21}+10\,b_{20}b_{21}-b_{21}^{2}+2\,b_{20}b_{22},\\
        &10\,b_{10}b_{11}b_{20}+b_{11}^{2}b_{20}-2\,b_{10}b_{12}b_{20}-6\,b_{10}^{2}b_{21},\\
        &240\,b_{11}^{3}b_{20}+48\,b_{11}^{2}b_{12}b_{20}-96\,b_{10}b_{12}^{2}b_{20}-1\,440\,b_{10}^{2}b_{11}b_{21}-288\,b_{10}^{2}b_{12}b_{21}+1\,500\,b_{10}^{2}b_{21}\\
        &\quad\quad\quad-250\,b_{10}b_{11}b_{21}-25\,b_{11}^{2}b_{21}+50\,b_{10}b_{12}b_{21}+300\,b_{10}^{2}b_{22}.\\
    \end{align*}
    One can check (e.g. by using Macaulay2 \cite{M2}) that the subvariety $Y_V \subset\P^{\mathbf{n}}$ defined by the displayed equations is singular, and therefore not isomorphic to $(\P^1)^2$. 
\end{zb}
\subsection{}\label{sec:topresults}
In this section, we establish \cref{cohomology2}, which gives a formula for the cohomology ring of $Y_V$.
Throughout, we assume that $V$ is polymatroid general.
For an independent multiset $\mathbf b$, let $c_{\mathbf b} > 0$ be the coefficient of $[\P^{\mathbf b}]$ in   $[Y_V \cap \P^{\overline{\mathbf b}}]$, as in the statement of \cref{main:stratification}.
\begin{prop}\label{cohomology1}
  If $V \subset \C^{\mathbf n}$ is polymatroid general, then its singular cohomology ring $H^*(Y_V, \Q)$ is isomorphic to $\Q[y_1, \ldots, y_N] / I$, where $I$ is the ideal generated by 
  \begin{align*}
    \textstyle    c_{\mathbf b'} y_1^{b_1} \cdots y_N^{b_N} - c_{\mathbf b} y_1^{b_1'} \cdots y_N^{b_N'}, &\quad \text{$\mathbf b$ and $\mathbf b'$ are independent multisets of $P$ with $\overline{\mathbf b} = \overline{\mathbf b'}$, and } \\
    y_1^{d_1}\cdots y_N^{d_N}, &\quad \text{$\mathbf d$ is a dependent multiset.}
  \end{align*}
\end{prop}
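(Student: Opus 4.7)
The plan is to define a ring homomorphism $\phi: \Q[y_1, \ldots, y_N] \to H^*(Y_V, \Q)$ by $\phi(y_i) = i^*(h_i)$, where $i: Y_V \hookrightarrow \P^{\mathbf n}$ is the inclusion and $h_i$ is the pullback of the hyperplane class from the $i$-th factor $\P^{n_i}$. Writing $y^{\mathbf b}$ for the monomial $y_1^{b_1} \cdots y_N^{b_N}$, the goal is to show $\ker \phi = I$. By \cref{main:stratification}, $Y_V$ admits an affine paving by cells $\{Y_V \cap O_{\mathbf s}\}_{\mathbf s \in \mathcal L_P}$, so $H^*(Y_V, \Q)$ has a basis $\{\tau_{\mathbf s}\}_{\mathbf s \in \mathcal L_P}$ given by cell closure classes; in particular $\dim_\Q H^*(Y_V, \Q) = |\mathcal L_P|$.

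I would first establish $I \subset \ker \phi$. For a dependent multiset $\mathbf d$, choose $A \subset E$ with $|\pi_A(\mathbf d)| > \rk_P(A)$, and represent $h^{\mathbf d}$ in $H^*(\P^{\mathbf n})$ by a generic product $L = L_1 \times \cdots \times L_N$ of linear subspaces $L_i \subset \P^{n_i}$ of codimension $d_i$. The projection $\pi_A(Y_V) = Y_{\pi_A(V)}$ has dimension $\rk_P(A)$, while $\prod_{i \in A} L_i$ has codimension $|\pi_A(\mathbf d)| > \rk_P(A)$ in $\P^{\pi_A(\mathbf n)}$; for generic choice they are disjoint, forcing $Y_V \cap L = \emptyset$ and hence $\phi(y^{\mathbf d}) = 0$.

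For the relations $c_{\mathbf b'} y^{\mathbf b} - c_{\mathbf b} y^{\mathbf b'}$ (with $\overline{\mathbf b} = \overline{\mathbf b'} = \mathbf s$), I would show $\phi(y^{\mathbf b}) = c_{\mathbf b}\, \zeta_{\mathbf s}$ for a class $\zeta_{\mathbf s}$ depending only on $\mathbf s$. When $\mathbf s = \mathbf n$ (so $\mathbf b$ is a basis of $P$), the projection formula gives $i_*(\phi(y^{\mathbf b})) = h^{\mathbf b} \cdot [Y_V]$; by \cref{chowbases} and the Chow relations $h_i^{n_i+1} = 0$, all terms except $c_{\mathbf b} h^{\mathbf n}$ vanish, and since $i_*$ is an isomorphism in top cohomology, $\phi(y^{\mathbf b}) = c_{\mathbf b} [\mathrm{pt}]_{Y_V}$. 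For general $\mathbf s$, use \cref{strata} with $T = \{i : s_i = n_i\}$ and $V' = \pi_T(V)$: every basis of $\mathbf s$ decomposes as $\mathbf b = \mathbf b' + \sum_{i \notin T} s_i \mathbf e_i$ with $\mathbf b'$ a basis of $V'$, and a K\"unneth computation shows $c_{\mathbf b}$ equals the analogous coefficient $c_{\mathbf b', V'}$ for $V'$. Factoring $h^{\mathbf b} = \pi_T^*(h^{\mathbf b'}) \cdot h^{\sum_{i \notin T} s_i \mathbf e_i}$, noting that $\pi_T|_{Y_V}$ maps $Y_V$ onto $Y_{V'}$, and applying the top-degree case to $Y_{V'}$ yields $\phi(y^{\mathbf b}) = c_{\mathbf b} \cdot (\pi_T|_{Y_V})^*([\mathrm{pt}]_{Y_{V'}}) \cdot i^*(h^{\sum_{i \notin T} s_i \mathbf e_i})$, in which the class multiplying $c_{\mathbf b}$ depends only on $\mathbf s$.

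The main obstacle is the final dimension argument. The relations in $I$ imply $\Q[y_1, \ldots, y_N]/I$ is spanned by $\{y^{\mathbf b_{\mathbf s}}\}_{\mathbf s \in \mathcal L_P}$ for any choice of basis $\mathbf b_{\mathbf s}$ of each $\mathbf s$, giving $\dim_\Q \Q[y_1, \ldots, y_N]/I \leq |\mathcal L_P| = \dim_\Q H^*(Y_V, \Q)$. It remains to prove the induced map $\bar\phi$ is surjective, equivalently that the $|\mathcal L_P|$ classes $\{\phi(y^{\mathbf b_{\mathbf s}})\}$ are linearly independent in $H^*(Y_V, \Q)$. Because $Y_V$ is singular, Poincar\'e duality is unavailable, so I would establish independence by induction on $\dim V$, using the product decomposition from \cref{strata} to reduce to smaller Schubert varieties, combined with a pairing argument against the cell basis $\{\tau_{\mathbf s'}\}$ to detect each $\phi(y^{\mathbf b_{\mathbf s}})$ in its appropriate graded piece.
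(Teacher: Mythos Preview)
Your argument is essentially correct and close in spirit to the paper's, but the organization and the final step can be simplified considerably.

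For the relations $I \subset \ker\phi$, you take a constructive route: you explicitly compute $\phi(y^{\mathbf b}) = c_{\mathbf b}\,\zeta_{\mathbf s}$ via the product decomposition of \cref{strata} and a K\"unneth argument. The paper instead handles both families of relations with a single pairing computation: by the projection formula, $\kappa^*(\alpha) = 0$ if and only if $\alpha \smallfrown \kappa_*[\overline{Y_V \cap O_{\mathbf s}}] = 0$ for all $\mathbf s$, and \cref{main:stratification}\cref{main:stratification:class} gives
\[
  y^{\mathbf b} \smallfrown [\overline{Y_V \cap O_{\mathbf s}}] =
  \begin{cases}
    c_{\mathbf b}, & \text{$\mathbf b$ independent and $\overline{\mathbf b} = \mathbf s$,}\\
    0, & \text{otherwise.}
  \end{cases}
\]
This immediately shows both types of generators of $I$ lie in $\ker\kappa^*$. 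Your geometric argument for dependent monomials and your factorization argument for the linear relations are correct, but this pairing is shorter and more uniform.

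The real difference is in how you handle surjectivity. You propose induction on $\dim V$ plus a pairing against the cell basis. The induction is unnecessary: the pairing you describe already does all the work. The matrix $\big\langle \phi(y^{\mathbf b_{\mathbf s}}), [\overline{Y_V \cap O_{\mathbf s'}}]\big\rangle$, computed exactly as above, is diagonal with positive entries $c_{\mathbf b_{\mathbf s}}$, so the classes $\phi(y^{\mathbf b_{\mathbf s}})$ are linearly independent and $\bar\phi$ is surjective. The paper phrases this same computation as: the classes $\kappa_*[\overline{Y_V \cap O_{\mathbf s}}]$ are linearly independent in $A_*\P^{\mathbf n}$ by \cref{main:stratification}\cref{main:stratification:class}, so $\kappa_*$ is injective on homology, hence $\kappa^*$ is surjective on cohomology by the Universal Coefficient Theorem. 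Establishing surjectivity of $\kappa^*$ \emph{first} is what lets the paper verify the relations and finish by a dimension count in one stroke, avoiding the product decomposition entirely.
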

\begin{proof}
  Let $\kappa: Y_V \to \P^{\mathbf n}$ denote the inclusion.
  \revision{By \cref{main:stratification}\cref{main:stratification:class}, the classes $\{[Y_V \cap \P^{\mathbf s}] : \mathbf s \in \mathcal L_{P,\mathbf n}\}$ are linearly independent in $A_* \P^{\mathbf n}$, so the pushforward $A_* Y_V \to A_* \P^{\mathbf n}$ is injective.}

  The sets $O_{\mathbf s}$ and $Y_V \cap O_{\mathbf s}$ comprise \revision{algebraic} cell decompositions of $\P^{\mathbf n}$ and $Y_V$, respectively.
  Hence, the Chow groups of these varieties are naturally isomorphic to their Borel-Moore homology groups via the cycle class map \cite[Example 19.1.11]{F98}.
  Since $\P^{\mathbf n}$ and $Y_V$ are compact, their Borel-Moore homology is equal to their singular homology.
  Summing up, we've learned that the singular homology groups of $Y_V$ are all free and that the homology pushforward $\kappa_*: H_*(Y_V; \Z) \to H_*(\P^{\mathbf n}; \Z)$ is injective.
  Applying the Universal Coefficient Theorem, we learn that $\kappa^*: H^*(\P^{\mathbf n}; \Q) \to H^*(Y_V; \Q)$ is surjective.
  
  The cohomology ring of $\P^{\mathbf n}$ is isomorphic to $R := \Q[y_1, \ldots, y_N] / (y_1^{n_1+1}, \ldots, y_N^{n_N+1})$, where $y_i$ represents a hyperplane pulled back from $\P^{n_i}$. The monomials $y_i^{n_i+1}$ are among the claimed relations because $n_i \geq \rk_P(i)$ for all $1 \leq i \leq N$.

  Let us verify that the remaining generators of $I$ are in $\ker(\kappa^*)$. By the Universal Coefficient Theorem, the pullback of $\alpha \in H^k(\P^{\mathbf n}; \Q)$ is zero if and only if its cap product with any element of $H_k(Y_V; \Q)$ is zero.
  By the projection formula and the injectivity of $\kappa_*$, this is equivalent to $\alpha \smallfrown [\overline{Y_V \cap O_{\mathbf s}}] = 0$ for all rank $k$ combinatorial flats $\mathbf s$ of $P$.
  By \cref{main:stratification}\cref{main:stratification:class},
  \[ y_1^{b_1} \cdots y_N^{b_N} \smallfrown [\overline{Y_V \cap O_{\mathbf s}}] =
    \begin{cases}
        c_{\mathbf b}, & \text{if $\mathbf b$ is independent and $\overline{\mathbf b} =\mathbf s$} \\
      0, &\text{otherwise,}
    \end{cases} \]
  so all claimed relations are in $\ker(\kappa^*)$.
  Moreover, the dimension of the degree $k$ homogeneous component of $R/I$ is plainly equal to $\dim H^k(Y_V; \Q)$, which completes the proof.
\end{proof}

\begin{proof}[Proof of \cref{cohomology2}]
  Let $S = \Q[y_{\mathbf s} : \text{$\mathbf s \in \mathcal L_{P, \mathbf n}$}]$, graded by $\deg y_{\mathbf s} = \rk_P(\mathbf s)$.
  Using the notation of \cref{cohomology1}, define a map
  \[
    \phi: S \to H^*(Y_V, \Q), \quad y_{\mathbf s} \mapsto \frac{1}{c_{\mathbf b}} y_1^{b_1} \cdots y_N^{b_N}
  \]
  where $\mathbf b$ is any basis of $\mathbf s$. The image of $y_{\mathbf s}$ does not depend on $\mathbf b$ by \cref{cohomology1}.

  Let $J$ be the ideal of $S$ generated by
  \begin{align*}
    y_{\mathbf s} y_{\mathbf s'}, &\quad \rk(\mathbf s \smile \mathbf s') < \rk(\mathbf s) + \rk(\mathbf s') \\
    c_{\mathbf b} c_{\mathbf b'} y_{\mathbf s} y_{\mathbf s'} - c_{\mathbf b + \mathbf b'} y_{\mathbf s \smile \mathbf s'}, & \quad \text{$\mathbf b, \mathbf b'$ are bases of $\mathbf s, \mathbf s'$ so that $\mathbf b + \mathbf b'$ is a basis for $\mathbf s \smile \mathbf s'$.}
  \end{align*}
  We first show that $J \subset \ker \phi$. Suppose $\mathbf s$ and $\mathbf s'$ are two combinatorial flats.
  If $\rk(\mathbf s \smile \mathbf s') < \rk(\mathbf s) + \rk(\mathbf s')$, then the sum of any pair of bases of $\mathbf s$ and $\mathbf s'$ is a dependent multiset, so $\phi(y_{\mathbf s} y_{\mathbf s'}) = 0$.
  On the other hand, if $\rk(\mathbf s \smile \mathbf s') = \rk(\mathbf s) + \rk(\mathbf s')$, then there are bases $\mathbf b$ and $\mathbf b'$ such that $\mathbf b + \mathbf b'$ is a basis for $\mathbf s \smile \mathbf s'$, so $c_{\mathbf b} c_{\mathbf b'} \phi(y_{\mathbf s} y_{\mathbf s'}) = c_{\mathbf b + \mathbf b'} \phi(y_{\mathbf s \smile \mathbf s'})$.
  Hence, $J \subset\ker \phi$.

  Surjectivity of $\phi$ follows from the fact that $\phi(y_{\mathbf s}) \smallfrown [Y_V \cap \P^{\mathbf s'}]$ is 1 if $\mathbf s = \mathbf s'$ and 0 otherwise. Finally, it is evident from the relations that the $y_{\mathbf{s}}$ generate $S/J$ as a $\mathbb{Q}$-vector space, so the degree $k$ part of $S/J$ has dimension at most $\dim H^k(Y_V; \Q)$, and therefore $J =\ker \phi$.
\end{proof}

\begin{rmk}[Geometry \& top-heaviness of combinatorial flats]
  In general, our proof of \cref{combo}\cref{combo:topheavy} (\cref{comboflatsimplification}) relies on high-powered results of \cite{BHMPW20b}.
  However, if $P$ is realized by $V \subset \K^{\mathbf n}$ with $\K$ of characteristic 0, then our work gives an independent proof of top-heaviness for $\mathcal L_P$ via arguments similar to those contained in \cite{HW17,BE09}.

  In outline:
  By \cite[Proposition 6.8.11]{O11}, we may assume that $\K = \C$.
  Let $\IH^i(Y_V)$ be the degree $i$ intersection cohomology of $Y_V$, and let $H^i(Y_V)$ be the degree $i$ singular cohomology.
  Arguments of \cite{HW17,BE09} show that there is an injection of $H^*(Y_V)$-modules $H^*(Y_V) \to \IH^*(Y_V)$.
  Intersection cohomology has the Hard Lefschetz property for any ample class $L \in H^{2}(Y_V)$, meaning that the multiplication maps
  \[
\IH^i(Y_V) \to \IH^{d-i}(Y_V), \quad \alpha \mapsto L^{d-2i} \alpha,
  \]
  are injective when $i \leq d/2$.
  These maps preserve the submodule $H^*(Y_V) \subset \IH^*(Y_V)$, so they restrict to injections $H^i(Y_V) \to H^{d-i}(Y_V)$.
  \cref{cohomology2} states that $\dim H^k(Y_V) = |\mathcal L_P^k|$, so taking dimensions yields top-heaviness for $\mathcal L_P$.
\end{rmk}

\section{Problems}\label{sec:problems}
\subsection{} We produce Schubert varieties for polymatroids $P$ realizable over a field of characteristic 0.
In this case, our work provides a geometric proof of the top-heavy property for $\mathcal L_P$.
The proof of top-heaviness for realizable matroids \cite{HW17} relies on being able to construct Schubert varieties of hyperplane arrangements in positive characteristic.
\begin{question}
  \revision{Given a polymatroid $P$ realizable over a field of positive characteristic, construct a variety with an affine paving whose poset of strata if the lattice of combinatorial flats, thereby proving top-heaviness of $\mathcal L_P$.}
  \end{question}

\subsection{} Schubert varieties of hyperplane arrangements are always normal, but this is untrue for Schubert varieties of subspace arrangements.
\begin{zb}\label{zb:nonnormal}
  Let $V$ be cut out by the equation $a_{11} + a_{12} = a_{21} + a_{22}$ in $\C^2 \times \C^2$.
  Then $Y_V$ is the hypersurface in $\P^2 \times \P^2$ defined by
  \[
    b_{20}^2( b_{10}b_{11} + b_{10}b_{12} - \tfrac{1}{2} b_{11}^2)
    = b_{10}^2 (b_{20} b_{21} + b_{20} b_{22} - \tfrac12 b_{21}^2).
  \]
  Setting $b_{ij/k\ell} := b_{ij} / b_{k\ell}$, let $U$ be, for example, the affine chart on which $b_{12} \neq 0$ and $b_{22} \neq 0$, with coordinate ring
  \[
    \mathcal O(U) = \frac{\K[b_{10/12}, b_{11/12}, b_{20/22}, b_{21/22}]}
    {\langle b_{20/22}^2(b_{10/12} b_{11/12} + b_{10/12} - \tfrac12 b_{11/12}^2) - b_{10/12}^2(b_{20/22}b_{21/22} + b_{20/22} - \tfrac12 b_{21/22}^2) \rangle}.
  \]
  The rational function
  \[
    f = \frac{b_{20/22}}{b_{10/12}} ( b_{10/12}b_{11/12} + b_{10/12} - \tfrac{1}{2} b_{11/12}^2)
  \]
  satisfies the monic polynomial
  \[ X^2 - ( b_{10/12}b_{11/12} + b_{10/12} - \tfrac{1}{2} b_{11/12}^2)(b_{20/22} b_{21/22} + b_{20/22} - \tfrac12 b_{21/22}^2) \in \mathcal O(U)[X], \]
  yet is not a regular function on $U$. 
  Nevertheless, in general we conjecture that $Y_V \subset \prod_{i=1}^N \P^{n_i}$ is normal when the subspaces $V \cap \ker(\pi_{E \setminus i})$, $1 \leq i \leq N$, span $V$.
\end{zb}
\begin{question}
  Determine when Schubert varieties of subspace arrangements are normal.
\end{question}

\subsection{} The ideals defining Schubert varieties of hyperplane arrangements in $(\P^1)^N$ are studied in \cite{AB16}, and found to be highly tractable: multidegrees, multigraded Betti numbers, and initial ideals can all be understood explicitly in terms of the combinatorics of the relevant matroid.
We believe these objects will also admit combinatorial descriptions for Schubert varieties of subspace arrangements.
A major obstacle to verifying this is that we do not know their defining equations.
\begin{question}
  Find defining equations for Schubert varieties of subspace arrangements.
\end{question}
Unlike those of Schubert varieties of hyperplane arrangements, equations for Schubert varieties of subspace arrangements cannot be obtained by mere homogenization of circuits, even in relatively simple cases.
\begin{zb}\label{zb:equations}
  Let $V$ be the line in $\C^3$ spanned by $(1,1,1)$, which represents the polymatroid $P$ of rank 1 on one element.
\revision{A minimal set of polynomials defining $Y_V \subset \P^3$ is:
  \begin{gather*}
    6\,x_{0}b_{12}+4\,x_{1}b_{12}+2\,b_{12}^{2}-6\,x_{0}x_{3}-3\,x_{1}x_{3} \\
    x_{1}^{2}-2\,x_{1}b_{12}-2\,b_{12}^{2}+3\,x_{1}x_{3} \\
    3\,x_{0}x_{1}+5\,x_{1}b_{12}+4\,b_{12}^{2}-3\,x_{0}x_{3}-6\,x_{1}x_{3}.
  \end{gather*}}
  \revision{The ideal generated by these polynomials is not the ideal of homogenized circuits of $V$.
  However, observe that there are three minimal equations of degree 2, and that the matroid lift $\widetilde P$ with respect to $\mathbf n = (3)$ is a uniform matroid of rank 1 on three elements, which has three circuits of size 2.

  In general, we observe in examples that if $Y_V \subset \P^{\mathbf n}$, then the number of
  minimal equations of a given multidegree $\mathbf d$ is equal to the number of circuits of the lift $\widetilde P$ with respect to $\mathbf n$ that represent $\mathbf d$.}
\end{zb}
Along similar lines, \cite[Theorem 1.5]{AB16} gives a combinatorial formula for the multigraded Betti numbers of the ideal of a Schubert variety of hyperplane arrangement.
\begin{question}
  Find a combinatorial formula for the multigraded Betti numbers of the ideal of a Schubert variety of subspace arrangement.
\end{question}
\subsection{}
The Chow class of a Schubert variety of subspace arrangement $Y_V \subset \P^{\mathbf n}$ may not be multiplicity-free, and the coefficients $c_{\mathbf b}$ of $[Y_V]$ may depend on $\mathbf n$.
We conjecture the following formula:
\begin{conj}
  If $Y_V \subset \P^{\mathbf n}$, then \revision{when $\mathbf b$ is a basis,} the coefficient of $[\P^{\mathbf b}]$ in $[Y_V]$ is $c_{\mathbf b} = \binom{n_1}{b_1}\binom{n_2}{b_2} \cdots \binom{n_N}{b_N}$.
\end{conj}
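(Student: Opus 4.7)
The plan is to compute $c_{\mathbf b} = [Y_V] \cdot \prod_i y_i^{b_i}$ directly as an intersection number, exploiting the Hassett-Tschinkel isomorphism $\iota \colon \K^{\mathbf n} \xrightarrow{\sim} O_{\mathbf n}$. Choose the specific representative $L = \bigcap_i \{b_{i,k} = 0 : k > n_i - b_i\} \subset \P^{\mathbf n}$ of the class $\prod_i y_i^{b_i}$. In the affine chart $O_{\mathbf n}$, the preimage $\iota^{-1}(L)$ consists of those $\mathbf a \in \K^{\mathbf n}$ for which the truncated exponential $f_{\mathbf a, i}(t) := \exp\bigl(\sum_j a_{i, j} t^j\bigr) \bmod t^{n_i + 1}$ has degree at most $n_i - b_i$, equivalently (viewing $f_{\mathbf a, i} = \prod_m (1 + \alpha_{i, m} t)$ over the algebraic closure) at least $b_i$ of the roots $\alpha_{i, m}$ vanish.

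The first step is to use \cref{main:stratification}\cref{main:stratification:poset} to check that $Y_V \cap L$ has no contribution from boundary strata, so that $c_{\mathbf b} = |V \cap \iota^{-1}(L)|$ counted with intersection multiplicity. The second step is the enumeration: the conjectured formula $\prod_i \binom{n_i}{b_i}$ equals the number of bases $B$ of the multisymmetric lift $\widetilde P$ satisfying $|B \cap E_i| = b_i$ for every $i$ (by \cref{liftbases}), which suggests that solutions $\mathbf a$ should be in bijection with labelings specifying which $b_i$ of the roots $\alpha_{i, m}$ vanish. For a polymatroid-general $V$, one expects to show that for each such labeled configuration, there is a unique $\mathbf a \in V$ realizing it; together with transversality, this would yield the formula.

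The main obstacle is the rigorous enumeration and multiplicity analysis. The root parameterization $\phi \colon \prod_i \K^{n_i - b_i} / \mathfrak{S}_{n_i - b_i} \to \iota^{-1}(L)$ reduces the problem to counting solutions of a system of $|\mathbf n|-d$ polynomial equations in as many symmetric-function variables, but this system is highly non-generic: the defining equations of $V$, pulled back via $\phi$, acquire common factors that substantially reduce the naive Bezout count (as already occurs in small examples such as $n_1 = 3$, $b_1 = 1$). An alternative, perhaps cleaner approach is to seek a direct geometric or cohomological comparison with the matroid Schubert variety $\widetilde Y_V \subset (\P^1)^{\widetilde E}$ of the lift: since $[\widetilde Y_V] = \sum_B [(\P^1)^B]$ by \cite{AB16}, the conjectured expression for $[Y_V]$ is precisely the ``pushforward'' of $[\widetilde Y_V]$ under the $\Z$-linear map $A_*((\P^1)^{\widetilde E}) \to A_*(\P^{\mathbf n})$ sending $[(\P^1)^B] \mapsto [\P^{(|B \cap E_i|)_i}]$, and realizing this pushforward geometrically via an explicit morphism or rational map would likely give a clean proof.
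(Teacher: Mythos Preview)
The statement you are attempting to prove is an open \emph{conjecture} in the paper (stated in Section~5, ``Problems''); the authors do not supply a proof. There is therefore no paper argument to compare against.

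On the merits of your outline: your first step is sound. The representative $L = \P^{\mathbf n - \mathbf b}$ meets $O_{\mathbf s}$ only when $\mathbf s \geq \mathbf b$; since $\mathbf b$ is a basis, any combinatorial flat $\mathbf s \geq \mathbf b$ has $\rk_P(\mathbf s) \geq |\mathbf b| = d$, forcing $\mathbf s = \mathbf n$. So $Y_V \cap L$ indeed lies entirely in the open orbit, and the boundary contributes nothing. Your generating-function description of $\iota^{-1}(L)$ via $f_{\mathbf a,i}(t) = \exp\bigl(\sum_j a_{i,j}t^j\bigr) \bmod t^{n_i+1}$ is also correct, and the equivalence between ``top $b_i$ coefficients vanish'' and ``at least $b_i$ of the roots $\alpha_{i,m}$ vanish'' follows from the elementary symmetric polynomial interpretation.

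You are equally right that the second step is the real problem, and your proposal does not resolve it. The map from roots $\alpha_{i,m}$ to coordinates $a_{i,j}$ is nonlinear, so the linear constraints cutting out $V$ become a nonlinear system whose solution count and multiplicities you have not controlled. Your alternative suggestion---relating $[Y_V]$ to a pushforward of $[\widetilde Y_V]$ along a map $(\P^1)^{|\mathbf n|} \to \P^{\mathbf n}$---is attractive (the factorwise quotient by $\mathfrak S_{n_i}$ has the right degree arithmetic to produce $\prod_i \binom{n_i}{b_i}$), but you would need to exhibit such a map compatible with the two Hassett--Tschinkel embeddings and verify that it carries $\widetilde Y_V$ onto $Y_V$ generically one-to-one after accounting for the symmetric-group quotient. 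None of that is supplied. In short, you have a plausible plan of attack on an open problem, with the main obstacle honestly flagged, but not a proof.
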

\subsection{}
The singularities of Schubert varieties of hyperplane arrangements can be resolved in a ``universal fashion'' using the \word{stellahdral variety} of \cite{BHMPW20}.
\begin{question}
  Resolve the singularities of Schubert varieties of subspace arrangements. Is there a ``universal'' resolution?
\end{question}

\subsection{}
The \word{$Z$-polynomial} of a matroid $M$ is a combinatorial invariant defined in \cite{Zpoly}.
When $M$ is realized by $V \subset \K^N$, the $Z$-polynomial is the Poincar\'e polynomial of the intersection cohomology of the Schubert variety of hyperplane arrangement $Y_V$.
\begin{question}
  Compute the intersection cohomology Betti numbers of a Schubert variety of subspace arrangement.
  Are they combinatorially determined?
\end{question}
An analogous question can be asked for the \word{Kazhdan-Lusztig polynomial} of \cite{KLpoly}, which in the case of a realizable matroid corresponds to the local intersection cohomology of $Y_V$ at the point $(\infty, \ldots, \infty)$.
\bibliographystyle{amsalpha}
\bibliography{bibliography}

\newcommand{\etalchar}[1]{$^{#1}$}
\providecommand{\bysame}{\leavevmode\hbox to3em{\hrulefill}\thinspace}
\providecommand{\MR}{\relax\ifhmode\unskip\space\fi MR }
\providecommand{\MRhref}[2]{%
  \href{http://www.ams.org/mathscinet-getitem?mr=#1}{#2}
}
\providecommand{\href}[2]{#2}
\begin{thebibliography}{CCRL{\etalchar{+}}20}

\bibitem[AB16]{AB16}
Federico Ardila and Adam Boocher, \emph{The closure of a linear space in a
  product of lines}, J. Algebraic Combin. \textbf{43} (2016), no.~1, 199--235.
  \MR{3439307}

\bibitem[AG24]{AG24}
Omid Amini and Lucas Gierczak, \emph{Combinatorial flag arrangements}, Preprint
  (2024), arXiv:2404.01971v1.

\bibitem[BCF23]{BCF22}
Joseph~E Bonin, Carolyn Chun, and Tara Fife, \emph{The natural matroid of an
  integer polymatroid}, SIAM Journal on Discrete Mathematics \textbf{37}
  (2023), no.~3, 1751--1770.

\bibitem[BE09]{BE09}
Anders Bj\"orner and Torsten Ekedahl, \emph{{On the shape of Bruhat
  intervals}}, Annals of Mathematics \textbf{170} (2009), 799--817.

\bibitem[BHM{\etalchar{+}}20]{BHMPW20b}
Tom {Braden}, June {Huh}, Jacob~P. {Matherne}, Nicholas {Proudfoot}, and Botong
  {Wang}, \emph{{Singular Hodge theory for combinatorial geometries}}, Preprint
  (2020), arXiv:2010.06088.

\bibitem[BHM{\etalchar{+}}22]{BHMPW20}
Tom Braden, June Huh, Jacob~P. Matherne, Nicholas Proudfoot, and Botong Wang,
  \emph{A semi-small decomposition of the {{Chow}} ring of a matroid}, Advances
  in Mathematics \textbf{409} (2022), 108646.

\bibitem[Bry86]{B86}
Thomas Brylawski, \emph{Constructions}, Theory of matroids (Neil White, ed.),
  Cambridge University Press, 1986, pp.~127--223.

\bibitem[CCRL{\etalchar{+}}20]{CFCYL20}
Federico Castillo, Yairon Cid-Ruiz, Binglin Li, Jonathan Monta\~no, and Naizhen
  Zhang, \emph{When are multidegrees positive?}, Adv. Math. \textbf{374}
  (2020), 107382, 34. \MR{4157582}

\bibitem[CHL{\etalchar{+}}20]{CHLSW22}
Colin Crowley, June Huh, Matt Larson, Connor Simpson, and Botong Wang,
  \emph{{The Bergman fan of a polymatroid}}, Preprint (2020),
  arXiv:2207.08764v2.

\bibitem[DW74]{DW74}
Thomas~A Dowling and Richard~M Wilson, \emph{The slimmest geometric lattices},
  Transactions of the American Mathematical Society \textbf{196} (1974),
  203--215.

\bibitem[DW75]{DW75}
\bysame, \emph{Whitney number inequalities for geometric lattices}, Proceedings
  of the American Mathematical Society \textbf{47} (1975), no.~2, 504--512.

\bibitem[EH16]{EH16}
David Eisenbud and Joseph Harris, \emph{3264 and all that}, Cambridge
  University Press, 2016.

\bibitem[EL24]{EL23}
Christopher Eur and Matt Larson, \emph{Intersection {{theory}} of
  {{polymatroids}}}, International Mathematics Research Notices \textbf{2024}
  (2024), no.~5, 4207--4241.

\bibitem[EPW16]{KLpoly}
Ben Elias, Nicholas Proudfoot, and Max Wakefield, \emph{{The Kazhdan--Lusztig
  polynomial of a matroid}}, Advances in Mathematics \textbf{299} (2016),
  36--70.

\bibitem[Ful98]{F98}
William Fulton, \emph{Intersection theory}, 2 ed., Springer-Verlag New York,
  1998.

\bibitem[GS]{M2}
Daniel~R. Grayson and Michael~E. Stillman, \emph{Macaulay2, a software system
  for research in algebraic geometry}, Available at
  \url{http://www.math.uiuc.edu/Macaulay2/}.

\bibitem[Hel72]{H72}
Thorkell Helgason, \emph{Aspects of the theory of hypermatroids}, Hypergraph
  Seminar of Ohio State University, Springer-Verlag, 1972.

\bibitem[HT99]{HT99}
Brendan Hassett and Yuri Tschinkel, \emph{Geometry of equivariant
  compactifications of {$G_a^n$}}, IMRN (1999), no.~22.

\bibitem[HW17]{HW17}
June Huh and Botong Wang, \emph{Enumeration of points, lines, planes, etc},
  Acta Math. \textbf{218} (2017), no.~2, 297--317. \MR{3733101}

\bibitem[Liu06]{liu}
Qing Liu, \emph{Algebraic {{Geometry}} and {{Arithmetic Curves}}}, Oxford
  {{Graduate Texts}} in {{Mathematics}}, Oxford University Press, Oxford, 2006.

\bibitem[Lov77]{L77}
L.~Lov\'{a}sz, \emph{Flats in matroids and geometric graphs}, Combinatorial
  surveys ({P}roc. {S}ixth {B}ritish {C}ombinatorial {C}onf., {R}oyal
  {H}olloway {C}oll., {E}gham, 1977), 1977, pp.~45--86. \MR{0480111}

\bibitem[McD75]{M75}
Colin McDiarmid, \emph{Rado's theorem for polymatroids}, Math. Proc. Camb.
  Phil. Soc. \textbf{78} (1975), no.~263, 263--281.

\bibitem[Ngu86]{N86}
Hien~Q. Nguyen, \emph{Submodular functions}, Theory of matroids (Neil White,
  ed.), Cambridge University Press, 1986, pp.~272--297.

\bibitem[Oxl11]{O11}
James Oxley, \emph{Matroid theory}, second ed., Oxford Graduate Texts in
  Mathematics, vol.~21, Oxford University Press, Oxford, 2011. \MR{2849819}

\bibitem[PXY18]{Zpoly}
Nicholas Proudfoot, Yuan Xu, and Ben Young, \emph{{The $ Z $-Polynomial of a
  Matroid}}, The Electronic Journal of Combinatorics \textbf{25} (2018), no.~1,
  1--26.

\end{thebibliography}
\end{document}